\def\AA{{\mathcal A}}
\def\CC{{\mathcal C}}
\def\KK{{\mathcal K}}
\def\LL{{\mathcal L}}
\def\IN{{\mathbb{N}}}
\def\IQ{{\mathbb{Q}}}
\def\IR{{\mathbb{R}}}
\def\IS{{\mathbb{S}}}
\def\Low{\mathfrak{L}}
\def\TO{\Longrightarrow}
\def\In{\subseteq}
\def\into{\hookrightarrow}
\def\prefix{\sqsubseteq}
\def\mto{\rightrightarrows}
\def\id{{\rm id}}
\def\pr{{\rm pr}}
\def\inj{{\rm in}}
\def\dom{{\rm dom}}
\def\range{{\rm range}}
\def\graph{{\rm graph}}
\def\Inv{{\rm Inv}}
\def\Cantor{{\{0,1\}^\IN}}
\def\Baire{{\IN^\IN}}
\def\QED{$\hspace*{\fill}\Box$}
\def\ll#1{\ell_{#1}}
\newcommand{\SO}[1]{{{\bf\Sigma}^0_{#1}}}
\newcommand{\SI}[1]{{{\bf\Sigma}^1_{#1}}}
\newcommand{\dO}[1]{{{\Delta}^0_{#1}}}
\def\LPO{\text{\rm\sffamily LPO}}
\def\LLPO{\text{\rm\sffamily LLPO}}
\def\BFT{\mbox{\rm\sffamily BFT}}
\def\C{\mbox{\rm\sffamily C}}
\def\UC{\mbox{\rm\sffamily UC}}
\def\LPO{\mbox{\rm\sffamily LPO}}
\def\LLPO{\mbox{\rm\sffamily LLPO}}
\def\MLPO{\mbox{\rm\sffamily MLPO}}
\def\leqT{\mathop{\leq_{\mathrm{T}}}}
\def\nleqT{\mathop{\not\leq_{\mathrm{T}}}}
\def\leqW{\mathop{\leq_{\mathrm{W}}}}
\def\equivW{\mathop{\equiv_{\mathrm{W}}}}
\def\leqSW{\mathop{\leq_{\mathrm{sW}}}}
\def\equivSW{\mathop{\equiv_{\mathrm{sW}}}}
\def\nleqW{\mathop{\not\leq_{\mathrm{W}}}}
\def\lW{\mathop{<_{\mathrm{W}}}}
\def\lSW{\mathop{<_{\mathrm{sW}}}}
\def\bigtimes{\mathop{\mathsf{X}}}
\def\botW{\mathbf{0}}
\newtheorem{proposition}{Proposition}[section]}
\newtheorem{theorem}[proposition]{Theorem}}
\newtheorem{lemma}[proposition]{Lemma}}
\newtheorem{definition}[proposition]{Definition}}
\newtheorem{corollary}[proposition]{Corollary}}
\newtheorem{example}[proposition]{Example}}
\newtheorem{conjecture}[proposition]{Conjecture}}
\newenvironment{proof}{{\bf Proof.}\begin{normalsize}}{\end{normalsize}\QED\\}
\newenvironment{mycases}{\left\{\begin{array}{ll}}{\end{array}\right.}
\begin{document}

\begin{frontmatter}



\title{Closed Choice and a\\ Uniform Low Basis Theorem\thanksref{nrf}}
\thanks[nrf]{This work has been supported by the National Research Foundation of South Africa (NRF)
                  and the Japanese Society for Promotion of Sciences (JSPS)}


\author{Vasco Brattka}

\address{Laboratory of Foundational Aspects of Computer Science\\
             Department of Mathematics \& Applied Mathematics\\
             University of Cape Town, South Africa\\
            {\tt Vasco.Brattka@uct.ac.za}
}

\author{Matthew de Brecht}

\address{Graduate School of Informatics\\
             Kyoto University, Japan\\
   {\tt matthew@iip.ist.i.kyoto-u.ac.jp}}

\author{Arno Pauly}

\address{Computer Laboratory\\
             University of Cambridge, UK\\
             {\tt Arno.Pauly@cl.cam.ac.uk}}

\begin{abstract}
We study closed choice principles for different spaces.
Given information about what does not constitute a solution,
closed choice determines a solution.
We show that with closed choice one can characterize several
models of hypercomputation in a uniform framework using Weihrauch reducibility.
The classes of functions which are reducible to closed
choice of the singleton space, of the natural numbers, of Cantor space
and of Baire space correspond to the class of computable functions,
of functions computable with finitely many mind changes, of weakly computable
functions and of effectively Borel measurable functions, respectively.
We also prove that all these classes correspond to classes of non-deterministically
computable functions with the respective spaces as advice spaces.
The class of limit computable functions can be characterized with
parallelized choice on natural numbers.
On top of these results we provide further insights into algebraic
properties of closed choice. In particular, we prove that
closed choice on Euclidean space can be considered as ``locally compact choice''
and it is obtained as product of closed choice on the natural numbers and on Cantor space.
We also prove a Quotient Theorem for compact choice which shows that
single-valued functions can be ``divided'' by compact choice in a certain sense.
Another result is the Independent Choice Theorem, which provides a uniform proof that
many choice principles are closed under composition.
Finally, we also study the related class of low computable functions, which contains
the class of weakly computable functions as well as the class of functions computable with finitely many
mind changes. As one main result we prove a uniform version of the Low Basis Theorem
that states that closed choice on Cantor space (and the Euclidean space) is low computable.
We close with some related observations on the Turing jump operation and its initial topology.
\end{abstract}

\begin{keyword}
Computable analysis \sep Borel complexity \sep Weihrauch reducibility.
\end{keyword}
\end{frontmatter}

\section{Introduction}
\label{sec:introduction}

The basic task to be studied in the present paper is the following:

\begin{quote}
Given information about what does not constitute a solution, find a solution.
\end{quote}

The difficulty of this task depends strongly on the structure of the set of potential solutions.
In general, each represented space $(X, \delta)$ induces a topology, where a set $U\In X$
is open, if its characteristic function
\[\chi_U:X\to\IS,x\mapsto\left\{\begin{array}{ll}
  1 & \mbox{if $x\in U$}\\
  0 & \mbox{otherwise}
\end{array}\right.\]
is continuous with respect to the representation $\delta$ and a standard representation
$\delta_\IS$ of Sierpi\'nski space $\IS=\{0,1\}$ (which is equipped with the topology $\{\emptyset,\{1\},\{0,1\}\}$).
Such a standard representation of $\IS$ can be defined by
\[\delta_\IS(p)=1:\iff(\exists n)\;p(n)=0\]
for all $p\in\IN^\IN$.
Intuitively, the open sets are those for which membership can be continuously confirmed.
Each represented space then comes naturally with a representation $\delta^\circ$ of the
open sets, defined by
\[\delta^\circ(p)=U:\iff[\delta\to\delta_\IS](p)=\chi_U\]
for all $p\in\IN^\IN$.
Here $[\delta\to\delta_\IS]$ denotes the canonical function space representation (see \cite{Wei00})
of $\delta$ and $\delta_\IS$ (which is the exponential in the category of represented spaces).
The representation $\delta^\circ$ in turn induces a representation $\psi_-^X$ of the closed sets by
$\psi^X_-(p) = X \setminus \delta^\circ(p)$. The restriction to closed sets as solution sets arises
from the fact that they are exactly those sets for which one can continuously confirm membership in the complement.

We give some intuitive descriptions of equivalent versions of this very general representation for
concrete spaces that we will consider.

\begin{itemize}
\item $\mathbb{N}=\{0,1,2,...\}$, the set of natural numbers: the standard representation is
defined by $\delta_\IN(p):=p(0)$ and an equivalent way of defining $\psi_-^\IN$ is
by $\psi_-^\IN(p)=\{n\in\IN:n+1\not\in\range(p)\}$. That is $\psi_-^\IN(p)=A$, if $p$ is an
enumeration of all points that are not in $A$.
\item $\Cantor$, the Cantor space: the standard representation can be obtained by restricting the identity
on Baire space to Cantor space $\delta_\Cantor:=\id_\Baire|_\Cantor$.
In this case one can think that $\psi_-^\Cantor(p)=A$ if $p$ is a (potentially empty) enumeration of words $w_i\in\{0,1\}^*$ such that
$A=\Cantor\setminus\bigcup_{i=0}^\infty w_i\Cantor$. That is $p$ is a (potentially empty) enumeration of words $w_i$ such
that the corresponding balls exhaust the exterior of $A$.
\item $\Baire$, the Baire space: this case can be handled analogously to Cantor space, except that
the representation $\delta_\Baire$ is just the identity.
\item $\mathbb{R}$, the Euclidean real number line (and $\mathbb{R}^n$ in general):
for convenience we assume that we use some standard numbering $\overline{\ \ \rule[0mm]{0mm}{3mm}}:\IN\to\IQ$.
Then the Cauchy representation $\rho:\In\Baire\to\IR$ can be defined by
$\rho(p):=\lim_{n\to\infty}\overline{p(n)}$, where the domain $\dom(\rho)$ contains
only rapidly converging sequences, i.e.\ $p$ with $|\overline{p(i)}-\overline{p(j)}|<2^{-j}$ for all $i>j$.
Thus, a real number $x$ is represented by a rapidly converging sequence of rational numbers.
The representation $\psi_-^\IR$ can then be considered as follows: a name $p$ of a set $A$ is a sequence $(\langle a_i,b_i\rangle)_{i\in\IN}$
such that $A=\IR\setminus\bigcup_{i=0}^\infty (\overline{a_i},\overline{b_i})$. That is, intuitively, $p$ is a list of rational
intervals that exhaust the complement of $A$.
\item $\mathbb{I}:=[0,1]$, the real unit interval (and $\mathbb{I}^n$ in general): this can be treated by restricting the case of $\IR^n$.
\end{itemize}

For most spaces, closed choice is not computable. Thus, our interest lies on classifying the degree of incomputability,
that is the Weihrauch degree of closed choice, depending on the underlying space. Some of the arising
Weihrauch degrees are associated with certain models of type-2 hypercomputation, giving an independent
justification for our interest in closed choice. Additionally, as already demonstrated in
\cite{BG09b}, several important mathematical theorems share a Weihrauch degree with an appropriate
version of closed choice.

In recursion theory, a question closely related to our notion of closed choice has been studied.
Given a $\Pi_1^0$-class of Cantor space (which is a co-c.e.\ closed set in our terminology), what can we say about its elements?
It is known that a co-c.e.\ closed set may contain no computable points, but always contains a low point
\cite{JS72}. We present a stronger result, which takes the form that closed choice for Cantor space
is computable, if we replace the standard representation of the elements with another one, which just
renders the low points computable. On the side, we present a few results on the initial topology of the
Turing jump operator (called $\Pi$--topology by Joseph Miller, see \cite{Mil02a}).

\section{Weihrauch Reducibility}

This section serves to give a brief introduction into represented spaces, realizers, Weihrauch reducibility
and several associated operations. The basic reference for this section is \cite{Wei00}. While the study
of (variants of) Weihrauch-reducibility has commenced over a decade ago (\cite{Ste89}, \cite{Wei92a}, \cite{Wei92c}, \cite{Her96}),
the relevant sources for this section are \cite{BG09a}, \cite{BG09b} and \cite{Pau09}.

A significant ingredient of the theory of represented spaces is Baire space $\Baire$, i.e.\
the set of natural number sequences, equipped with the topology derived from the metric $d_\Baire$ which is
defined by $d_{\Baire}(u, u) = 0$ and $d_\Baire(u, v) = 2^{-\min \{n \mid u_n \neq v_n\}}$
for $u \neq v$. A useful property of Baire space to be exploited frequently is the existence of an effective and bijective
pairing  function $\langle \phantom{f}, \phantom{g}\rangle : \Baire \times \Baire \to \Baire$.
In the following we will denote partial functions using the symbol $\In$ as prefix and
multi-valued function using the double function arrow $\mto$. The term ``function'' or ``map'' might refer to any
of those but often we will indicate totality or single-valuedness, if relevant.

\begin{definition}[Representation]
A \emph{representation} $\delta$ of a set $X$ is a surjective single-valued (potentially partial)
function $\delta :\In\Baire\to X$. A \emph{represented space} $(X, \delta)$ is a set $X$ together
with a representation $\delta$ of it.
\end{definition}

Using represented spaces we can define the concept of a realizer. We denote the composition of
two (multi-valued) functions $f$ and $g$ either by $f\circ g$ or by $fg$.

\begin{definition}[Realizer]
Let $f : \In (X, \delta_X) \mto (Y, \delta_Y)$ be a multi-valued function between represented spaces.
A \emph{realizer} of $f$ is a single-valued function $F :\In \Baire \to \Baire$ satisfying
$\delta_Y \circ F(p)\in f \circ \delta_X(p)$ for all $p\in\dom(f\delta_X)$.
We use the notation $F \vdash f$ for expressing that $F$ is a realizer of $f$.
\end{definition}

As realizers are single-valued by definition, the statement that some function $F$ is a realizer
always implies its single-valuedness. Realizers allow us to transfer the notions of computability
and continuity and other notions available for Baire space to any represented space;
a function between represented spaces will be called {\em computable}, if it has a computable realizer, etc.
Now we have gathered the necessary provision to define Weihrauch reducibility ($\leqW$):

\begin{definition}[Weihrauch reducibility]
Let $f:\In X\mto Y$ and $g:\In U\mto V$ be multi-valued functions between represented spaces. Define $f \leqW g$, if there are computable
single-valued functions $K,H:\In\IN^\IN\to\IN^\IN$ satisfying $K \circ \langle \id, G \circ H \rangle \vdash f$ for all $G \vdash g$.
\end{definition}

We note that the relations $\leqW$ and $\vdash$ implicitly refer to the underlying representations, which
we will only mention explicitly if necessary.
The relation $\leqW$ is reflexive and transitive, thus it induces a partial order on the set of its equivalence classes
(which we refer to as {\em Weihrauch degrees}).
This partial order will be denoted by $\leqW$, as well. In this sense, $\leqW$ is a distributive bounded lattice
(for details see \cite{Pau09} and \cite{BG09a}).
We use $\equivW$ to denote equivalence regarding $\leqW$, $\lW$ for strict reducibility and $|_{\rm W}$ for incomparability.
There is a slightly stronger version of Weihrauch reducibility where the condition $K \circ \langle \id, G \circ H \rangle \vdash f$
is replaced by $K \circ G\circ H  \vdash f$. This {\em strong Weihrauch reducibility} is denoted by $f\leqSW g$.

We mention that the symbol $\leqW$ is also used to denote Wadge reducibility, which is in some sense a counterpart of
Weihrauch reducibility for sets and has been studied since the early 1970s, see \cite{Wad72,Wad83,Sel07}.
The double usage of $\leqW$ should not lead to confusion since Wadge reducibility is defined for sets and
Weihrauch reducibility for functions. We mention that some further information on the history of Weihrauch
reducibility is given in \cite{BG09a} and not repeated here.

We proceed to define a couple of useful operations.
While all definitions are given in terms of functions between represented spaces, they transfer directly to the according Weihrauch degrees.

The first operation is the coproduct, which plays the role of the supremum in the Weihrauch lattice.
By $X\coprod Y:=(\{0\}\times X)\cup(\{1\}\times Y)$ we denote the disjoint sum of two sets $X$ and $Y$
and if these spaces are represented spaces, then we assume that $X\coprod Y$ is equipped with the
canonical coproduct representation (see \cite{Pau09} for details).

\begin{definition}[Coproduct]
Let $f:\In X\mto Y$ and $g:\In W\mto Z$ be two multi-valued functions on represented spaces.
Then we define $f\coprod g:\In X\coprod W\mto Y\coprod Z$ by $(f\coprod g)(0,u):=\{0\}\times f(u)$ and $(f\coprod g)(1,u):=\{1\}\times g(u)$.
\end{definition}

One obtains that $H \vdash (f \coprod g)$ holds for exactly those $H$ satisfying $H(0w) = F(w)$ and $H(1w) = G(w)$
for some realizers $F \vdash f$ and $G \vdash g$ (that can depend on $w$).
We assume that the product $X\times Y$ of represented spaces $(X,\delta_X)$ and $(Y,\delta_Y)$
is represented with the canonical product representation $[\delta_X,\delta_Y]$ (see \cite{Wei00} for details).

\begin{definition}[Products]
Let $f:\In X\mto Y$ and $g:\In W\mto Z$ be two multi-valued functions on represented spaces.
Then we define $f\times g:\In X\times W\mto Y\times Z$ by $(f\times g)(x,w):=f(x)\times g(w)$.
\end{definition}

One obtains that $H \vdash (f \times g)$ holds for exactly those $H$ satisfying
$H(\langle u, v\rangle) = \langle F(u), G(v)\rangle$ for some realizers $F \vdash f$ and $G \vdash g$
(that might depend on $u,v$).

We say that a multi-valued map $f$ on represented spaces is {\em pointed}, if it contains at least
one computable point in its domain and we say that it is {\em idempotent}, if $f\times f\equivW f$.
In some cases the product and the coproduct are closely related.
If $f\times g$ is pointed and $f\coprod g$ is idempotent, then $f\coprod g\equivW f\times g$, since
\begin{eqnarray}
\mbox{$f\coprod g\leqW f\times g\leqW(f\coprod g)\times(f\coprod g)\leqW(f\coprod g)$},
\label{eqn:coproduct-product}
\end{eqnarray}
where pointedness of $f\times g$ is only required for the first reduction and idempotency of $f\coprod g$ only for the last one.
It is useful to consider a countable product of a multi-valued function with itself, which has been introduced in \cite{BG09a}.

\begin{definition}[Parallelization]
Let $f:\In X\mto Y$ be a multi-valued function on represented spaces.
We define the {\em parallelization} $\widehat{f}:\In X^\IN\mto Y^\IN$ by
$f(x_i)_{i\in\IN}:=\bigtimes_{i=0}^\infty f(x_i)$.
\end{definition}

We obtain that $H \vdash \hat{f}$ holds for exactly those $H$ satisfying
$H(\langle u_1, u_2, \ldots \rangle) = \langle F_1(u_1), F_2(u_2), \ldots\rangle$
for some realizers $F_i \vdash f$ for $i \in \mathbb{N}$ (that might depend on $u_i$).
We use the notation $\langle x_1, x_2, \ldots \rangle$ for the canonical countable pairing on Baire space.
In \cite{Pau09} a finite type of parallelization was introduced.
For any represented space $(X,\delta)$ we denote by $X^*=\bigcup_{i=0}^\infty(\{i\}\times X^i)$ the set
of all finite sequences over $X$ and we assume that $X^*$ is denoted by its canonical
standard representation $\delta^*$. For $f: \In X \mto Y$, we use $f^i$ to denote the $i$--fold product of $f$ with itself; and understand $f^0$ to be Weihrauch-equivalent to $\id_{\Baire}$.

\begin{definition}[Finite parallelization]
Let $f:\In X\mto Y$ be a multi-valued function on represented spaces.
We define the {\em finite parallelization} $f^*:\In X^*\to Y^*$ by
$f^*:=\coprod_{i=0}^\infty f^i$ with $f^*(i,x):=f^i(x)$ for all $(i,x)\in X^*$.
\end{definition}

Both types of parallelization form closure operators for the Weihrauch lattice,
which means $f \leqW \hat{f}$ and $\hat{f} \equivW\,\hat{\!\!\hat{f}}$, and $f \leqW g$ implies $\hat{f} \leqW \hat{g}$
and analogously for finite parallelization (see \cite{Pau09,Pau09b} and \cite{BG09a} for details).
It is easy to see that for pointed multi-valued functions idempotency is equivalent to $f\equivW f^*$.
It is interesting to mention that some variant of the (continuous) Weihrauch degrees has recently
be proved to be undecidable (see \cite{KSZ10}).

\section{Closed Choice}

Now we define the general version of closed choice for a represented space.

\begin{definition}[Closed Choice]
Let $(X,\delta)$ be a represented space. Then the {\em closed choice} operation
of this space is defined by
\[\C_X:\In\AA_-(X)\mto X,A\mapsto A\]
with $\dom(\C_X):=\{A\in\AA_-(X):A\not=\emptyset\}$.
\end{definition}

Here we assume that $\AA_-(X)$ is the set of closed subsets of $X$ equipped
with the negative information representation $\psi_-^X$ as defined in the
introduction. The computable points in $\AA_-(X)$ are called {\em co-c.e.\ closed sets}.
Intuitively, $\C_X$ takes as input a non-empty closed set in negative description (i.e.\ by some form
of enumeration of its complement) and it produces an arbitrary point of this set as output.
Hence, if we write $A\mapsto A$, then we mean that the multi-valued map $\C_X$ maps
the input $A$ (as a point in $\AA_-(X)$) to the set $A$ as a subset of $X$, namely the
set of possible function values.

Closed choice for particular spaces can characterize certain classes
of functions or degrees of mathematical theorems.
In \cite{GM09} it was proved that $\C_{\{0,1\}^\IN}$
is equivalent to the Hahn-Banach Theorem and to Weak K\H{o}nig's Lemma
and in \cite{BG09b} it was shown that $\C_\IN$ is equivalent to the Baire Category Theorem,
Banach's Inverse Mapping Theorem and several other theorems from functional analysis.
The following example shows that also many other classes that have been
considered can be characterized as classes of closed choice for certain spaces.

\begin{example}
We obtain $\C_{\{0\}}\equivW\C_\IS\equivW\id$, $\C_{\{0,1\}}\equivW\LLPO$
and, more generally, $\C_{\{0,1,...,n\}}\equivW\MLPO_{n+1}$.
\end{example}

Here $\MLPO_n$ and $\LLPO=\MLPO_2$ are taken from \cite{Wei92a}.
For $n\geq 1$ we consider $\MLPO_n:\In\IN^\IN\mto\IN$ as a multi-valued map with
\[\dom(\MLPO_n):=\{\langle p_1,...,p_n\rangle:(\exists i=1,...,n)\;p_i=\widehat{0}\}\]
and
\[\MLPO_n\langle p_1,...,p_n\rangle:=\{i:p_i=\widehat{0}\}.\]
Since $\LLPO$ is not idempotent (see \cite{BG09a}), it follows that closed choice
is not necessarily idempotent. However, it is a straightforward observation
that closed choice is always pointed, since $X$ is always a co-c.e.\ closed subset of itself.

\begin{lemma}[Pointedness]
If $X$ is a non-empty represented space, then $\C_X$ is pointed.
\end{lemma}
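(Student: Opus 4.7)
The plan is to exhibit an explicit computable point in $\dom(\C_X)$. Since $\dom(\C_X)$ consists of the non-empty closed subsets of $X$ (represented via $\psi_-^X$), and the hypothesis guarantees that $X$ itself is non-empty, the natural candidate to consider is $A := X$, the whole space viewed as a closed subset of itself. It remains to verify that this particular element is computable in $(\AA_-(X),\psi_-^X)$.

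First, I would unwind the representation. By definition, $\psi_-^X(p) = X \setminus \delta^\circ(p)$, so a $\psi_-^X$-name for $X$ is exactly a $\delta^\circ$-name for the empty open set $\emptyset \in \OO(X)$. In turn, $\delta^\circ(p) = U$ holds iff $[\delta \to \delta_\IS](p) = \chi_U$. Hence the task reduces to producing a computable name of the characteristic function $\chi_\emptyset : X \to \IS$, which is the constant map with value $0 \in \IS$.

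Next I would note that this constant map is trivially computable: a realizer is any computable function $F : \Baire \to \Baire$ such that $\delta_\IS \circ F(q) = 0$ for all $q$, for instance $F(q) := \widehat{1}$, the constant sequence of ones, which satisfies $\delta_\IS(\widehat{1}) = 0$ since $\widehat{1}$ has no zero. Through the standard type conversion, this yields a computable $p \in \Baire$ with $[\delta \to \delta_\IS](p) = \chi_\emptyset$, and hence a computable $\psi_-^X$-name of $X$. Therefore $X$ is a computable point of $\dom(\C_X)$, establishing pointedness.

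There is really no serious obstacle here; the only thing to be careful about is chasing the definitions of $\psi_-^X$, $\delta^\circ$ and $[\delta \to \delta_\IS]$ faithfully, so that the computability claim about $\chi_\emptyset$ is genuinely translated into a computable $\psi_-^X$-name rather than merely an intuitive statement that ``the complement of $X$ is empty''. Once that translation is made explicit, the argument is immediate and uses no properties of $X$ beyond being a non-empty represented space.
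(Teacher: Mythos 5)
Your proof is correct and follows the same route the paper takes, namely observing that $X$ itself is a computable (co-c.e.\ closed) point of $\AA_-(X)$; the paper states this as a one-line remark, and you simply unwind the definitions of $\psi_-^X$, $\delta^\circ$ and $[\delta\to\delta_\IS]$ to make the computability of the constant map $\chi_\emptyset$ explicit.
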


We get the following first result.

\begin{proposition}[Products]
\label{prop:product}
Let $X$ and $Y$ be non-empty represented spaces.
We obtain $\C_X\coprod\C_Y\leqW\C_X\times\C_Y\leqW\C_{X\times Y}$.
\end{proposition}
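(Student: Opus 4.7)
The plan is to establish both reductions directly; both are in fact strong Weihrauch reductions, and neither requires heavy machinery beyond pointedness (Lemma 3.2) and an elementary observation about the product of closed sets.

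For the first reduction $\C_X\coprod\C_Y\leqW\C_X\times\C_Y$, I would proceed as follows. Given an input $(i,A)$ to $\C_X\coprod\C_Y$, where $i\in\{0,1\}$ determines whether $A$ is viewed as a closed subset of $X$ or of $Y$, the input preprocessing $H$ computes a pair of $\psi_-$-names: if $i=0$, feed $(A,Y)\in\AA_-(X)\times\AA_-(Y)$ to $\C_X\times\C_Y$, and if $i=1$, feed $(X,A)$. This is where pointedness enters: $X$ and $Y$ are co-c.e.\ closed subsets of themselves (they have a canonical empty $\psi_-$-name), so these names are uniformly computable from $i$. The realizer of $\C_X\times\C_Y$ then returns a pair $(x,y)$ with $x\in A$ if $i=0$ and $y\in A$ if $i=1$; the postprocessing $K$ reads $i$ (which it must retain, either via the $\langle\id,\cdot\rangle$ construction of $\leqW$ or simply by passing $i$ through) and outputs $(0,x)$ or $(1,y)$ accordingly, which is a valid answer for $\C_X\coprod\C_Y$.

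For the second reduction $\C_X\times\C_Y\leqW\C_{X\times Y}$, the key observation is that the product of closed sets is closed, and that this operation is computable from $\psi_-^X\times\psi_-^Y$ to $\psi_-^{X\times Y}$. Concretely, if $p$ enumerates basic open sets $U_i$ with $X\sm A=\bigcup_i U_i$ and $q$ enumerates basic open sets $V_j$ with $Y\sm B=\bigcup_j V_j$, then
\[(X\times Y)\sm(A\times B)=\bigcup_i(U_i\times Y)\cup\bigcup_j(X\times V_j),\]
so from $(p,q)$ one computes a $\psi_-^{X\times Y}$-name of $A\times B$. Now $H$ performs this combination, feeds the result to $\C_{X\times Y}$, obtains some $(x,y)\in A\times B$, and $K$ passes this pair through unchanged to realize $\C_X\times\C_Y$; non-emptiness of $A\times B$ follows from non-emptiness of $A$ and $B$.

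I do not expect any real obstacle here; the only point that requires care is verifying that the map $(\psi_-^X,\psi_-^Y)\to\psi_-^{X\times Y}$ taking $(A,B)$ to $A\times B$ is computable for the canonical product representation on $X\times Y$, and that for the first reduction the role of $i$ (needed to tag the output) is properly threaded through either as part of the Weihrauch construction or by noting that $K$ has access to the original input via the $\langle\id,G\circ H\rangle$ pairing.
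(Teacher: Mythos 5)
Your proof is correct and follows essentially the same route as the paper: the first reduction is the instance for closed choice of the general fact that coproducts reduce to products for pointed functions (Equation~(\ref{eqn:coproduct-product}) with Lemma~3.2), which you unfold explicitly by feeding $(A,Y)$ or $(X,A)$, and the second reduction is exactly the paper's observation that the Cartesian product map $P:\AA_-(X)\times\AA_-(Y)\to\AA_-(X\times Y)$ is computable, for which you spell out the complement formula. Your version is just a more detailed rendering of the same two ideas.
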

\begin{proof}
As mentioned in the introduction, coproducts are reducible to products for all pointed functions.
It is easy to prove that the Cartesian product
$P:\AA_-(X)\times\AA_-(Y)\to\AA_-(X\times Y),(A,B)\mapsto A\times B$
is computable and we obtain $\C_X\times\C_Y=\C_{X\times Y}\circ P$.
Hence $\C_X\times\C_Y\leqW\C_{X\times Y}$.
\end{proof}

We will see in Corollary~\ref{cor:coprod-product} that the inverse of the first reduction does not hold in general.
Also the second reduction cannot be reversed in general, as the following result shows.
We denote by $\widehat{n}:=nnn...\in\IN^\IN$ the constant sequence with value $n$.

\begin{proposition}[Products of choice for finite spaces]
\label{prop:finite}
Let $A$ and $B$ be finite sets, each with at least two elements and equipped
with the discrete representation and topology. Then
$\C_A\times\C_B\lW\C_{A\times B}$.
\end{proposition}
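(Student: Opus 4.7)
The inequality $\C_A\times\C_B\leqW\C_{A\times B}$ is Proposition~\ref{prop:product}, so only strictness remains. It suffices to handle the case $A=B=\{0,1\}$: choice on a two--element discrete subspace strongly reduces to choice on any larger finite discrete space, and a reduction $\C_{A\times B}\leqW\C_A\times\C_B$ for arbitrary finite $A,B$ with at least two elements would restrict to one in this special case. I will henceforth assume $A=B=\{0,1\}$ and derive a contradiction from a hypothetical reduction given by computable maps $H,K$, writing $A'(p)\times B'(p)\In\{0,1\}^2$ for the product described by $H(p)$.

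The strategy is to start from the reference input $p_0=\widehat{0}$ naming the full set $T_0=\{0,1\}^2$ and to examine the four values $v_{(a,b)}:=K(p_0,(a,b))$ for $(a,b)\in\{0,1\}^2$. Because the second argument of $K$ is discrete, continuity supplies an $N$ such that $K(p,(a,b))=v_{(a,b)}$ whenever $p$ extends $p_0|_N$. For each $v\in\{0,1\}^2$ form an input $p_v$ extending $p_0|_N$ by enumerating $v$ exactly once; this names $T_v=\{0,1\}^2\setminus\{v\}$. The reduction then forces $v_{(a,b)}\neq v$ for every $(a,b)\in A'(p_v)\times B'(p_v)$, equivalently $A'(p_v)\times B'(p_v)$ is disjoint from $F_v:=\{(a,b):v_{(a,b)}=v\}$. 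Since the four sets $F_v$ partition $\{0,1\}^2$, if some pair $(a,b)$ were to lie in $A'(p_v)\times B'(p_v)$ for every $v$, then $v_{(a,b)}\in\{0,1\}^2$ would have to differ from all four elements of $\{0,1\}^2$, which is impossible.

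The main obstacle is precisely that the described product $A'(p_v)\times B'(p_v)$ may shrink: $H(p_v)$, in the tail beyond its shared prefix with $H(p_0)$, can commit to exclusions that remove candidate pairs. I plan to handle this via a case analysis on $(A'(p_0),B'(p_0))$. If both factors are singletons, the exclusions witnessing this are already committed by a finite prefix of $H(p_0)$ and persist under every perturbation $p_v$, so the unique pair in $A'(p_0)\times B'(p_0)$ already lies in $A'(p_v)\times B'(p_v)$ for all four $v$ and the contradiction follows at once. Otherwise at least one factor equals $\{0,1\}$, meaning $H(p_0)$ has committed to no exclusion on that side; since each factor has only two elements there are at most two kinds of exclusion $H$ can react with to the late enumeration of $v$ in each factor, and a pigeonhole argument over the four perturbations $v$ should force one pair $(a,b)$ to persist across enough perturbations to yield the same contradiction. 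Completing this finite combinatorial bookkeeping is the step I expect to demand the most care.
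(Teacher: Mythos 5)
Your reduction to the case $A=B=\{0,1\}$ is not justified. The embeddings available go the wrong way: from a hypothetical reduction $\C_{A\times B}\leqW\C_A\times\C_B$ you can deduce $\C_{\{0,1\}^2}\leqW\C_A\times\C_B$ (since $\{0,1\}^2$ embeds into $A\times B$), but passing to $\C_{\{0,1\}}\times\C_{\{0,1\}}$ would require $\C_A\times\C_B\leqW\C_{\{0,1\}}\times\C_{\{0,1\}}$, which is false once $|A|>2$. Restricting the assumed reduction to names of subsets of $\{0,1\}^2\In A\times B$ does not help either, since the inner map may still output closed subsets of $A$ and $B$ that leave $\{0,1\}$, and the outer map only knows how to interpret such points. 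The paper sidesteps this by arguing directly for arbitrary finite $A,B$, using a counting argument on chain lengths.

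Even if the base case were all that is needed, the combinatorial core of your argument has a real gap. In the case where $A'(p_0)$ or $B'(p_0)$ is all of $\{0,1\}$, the constraints you actually record --- each rectangle $R_v:=A'(p_v)\times B'(p_v)$ is non-empty, misses $v$, and is disjoint from $F_v$ --- simply do not force a common point across the four $R_v$, so no pigeonhole over those four perturbations can work. Concretely, take $v_{(1,1)}=(1,1)$ and $v_y=(0,0)$ for the other three $y$; then $R_{(0,0)}=\{(1,1)\}$, $R_{(1,1)}=R_{(0,1)}=R_{(1,0)}=\{(0,0)\}$ satisfies every stated constraint while $\bigcap_v R_v=\emptyset$. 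Ruling out such configurations requires descending further, e.g.\ to inputs naming singletons inside each $T_v$, which brings in additional constraints on the inner map. That is exactly what the paper's proof does: after normalizing to a strong reduction by a continuity argument at $\widehat{0}$ (a step your proposal handles similarly), it follows the inner map along a \emph{maximal strictly decreasing chain} of non-empty subsets of $A\times B$, which has length $(n+1)(k+1)$; the induced chain of rectangles can strictly shrink in one factor at most $n+k$ times, so two consecutive levels $M_i\supsetneq M_{i+1}$ produce the same rectangle, and a singleton input $\{x\}$ with $x\in M_i\setminus M_{i+1}$ yields the contradiction. Your sub-case in which both $A'(p_0)$ and $B'(p_0)$ are singletons does close (provided $N$ is also chosen large enough for the continuity of the inner map, not only for that of the outer one), but the remaining sub-case needs this multi-level descent rather than a one-step perturbation.
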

\begin{proof}
We assume that $A=\{0,...,n\}$ and $B=\{0,...,k\}$ with $n,k\geq1$
and we assume that $A$ is represented by $\delta_A(n\widehat{0}):=n$
with $\dom(\delta_A)=A\times\{\widehat{0}\}$.
Moreover, we assume $\psi_-^A(p)=\{i:i+1\not\in\range(p)\}$
with $\range(p)\In\{0,...,n+1\}$. This representation is computably
equivalent to the generic definition of $\psi_-^A$ given above.
Analogous assumptions are made for the representations $\delta_B$ and $\psi_-^B$
and $\delta_{A\times B}$ and $\psi_-^{A\times B}$.
We have $\C_A\times\C_B\leqW\C_{A\times B}$ by Proposition~\ref{prop:product}.

Let us now assume that $\C_{A\times B}\leqW\C_A\times\C_B$ holds.
Then there are computable functions $H,K$ such that
$F=H\langle\id,GK\rangle$ is a realizer of $\C_{A\times B}$ for any
realizer $G$ of $\C_A\times\C_B$. Now we consider $\widehat{0}=000...$
which represents $\psi_-^{A\times B}(\widehat{0})=A\times B$.
Then $(L,R):=[\psi_-^A,\psi_-^B]K(\widehat{0})$ is a pair of finite sets.\footnote{We are thankful to one of the referees
for providing a version of this paragraph that clarified and corrected the earlier version of it.}
For all $m$ and for all $p\in\dom(\psi_-^{A\times B})$ we have $\psi_-^{A\times B}(p)=\psi_-^{A\times B}(0^mp)$.
Moreover, by continuity of $K$ and since $A\times B$ is finite, there is $m\in\IN$ such that for all
$p\in\dom(\psi_-^{A\times B})$, we obtain that $(L',R')=[\psi_-^A,\psi_-^B]K(0^mp)$ implies $L'\In L$ and
$R'\In R$. By continuity of $H$ and since $A\times B$ is equipped with the discrete representation,
this $m$ can be taken such that $H\langle 0^mp,q\rangle$ is identical to $H\langle\widehat{0},q\rangle$ for
any fixed name $q$ of an element of $L'\times R'$. Finally, since there are only finitely many such $q$,
this $m$ can be selected as satisfying this property for all those $q$. Hence, for such $m$ we obtain
$F(0^mp)=H\langle\widehat{0},GK(0^mp)\rangle$. Since $\psi_-^{A\times B}$ is computably equivalent
to the representation $\psi_m$ given by $\psi_m(0^mp):=\psi_-^{A\times B}(p)$, we can assume without loss of generality
that there are computable functions $H,K$ such that $F=HGK$ is a realizer
of $\C_{A\times B}$ for any realizer $G$ of $\C_A\times\C_B$.


Let $M_j\subsetneqq M_{j-1}\subsetneqq...\subsetneqq M_0$ now be
a strictly decreasing sequence of non-empty subsets $M_i\In A\times B$.
Due to continuity of $K$ there is a monotone sequence of words $w_0\prefix w_1\prefix...\prefix w_j$
such that $\psi_-^{A\times B}(p_i)=M_i$ for $p_i:=w_i\widehat{0}$
and such that the sets
$(L_i,R_i):=[\psi_-^A,\psi_-^B]K(p_i)$ are component wise monotone as well.
That is $\emptyset\not=L_j\In L_{j-1}\In...\In L_0$
and $\emptyset\not=R_j\In R_{j-1}\In...\In R_0$. The cardinality of $A\times B$
is $(n+1)(k+1)$ and hence the longest strictly decreasing chain $(M_i)$ of non-empty sets
is one with length $j+1=(n+1)(k+1)$. The longest decreasing chain $(L_i,R_i)$
with the property that for each $i<j$ the left component or the right component
is strictly decreasing, i.e.\ $L_{i+1}\subsetneqq L_{i}$ or $R_{i+1}\subsetneqq R_{i}$,
has length $n+k+1$. For $n,k\geq1$ we have that
$n+k+1<(n+1)(k+1)$. Hence, there has to be at
least one $i<j$ such that $(L_i,R_i)=(L_{i+1},R_{i+1})$. By assumption there
is some element $x\in M_{i}\setminus M_{i+1}$.
For each element $y\in L_i\times R_i$ there is a realizer $G_y$ of $\C_A\times\C_B$
with $y=[\delta_A,\delta_B]G_yK(p_{i+1})$ and by assumption
$z:=[\delta_A,\delta_B]HG_yK(p_{i+1})\in M_{i+1}$ and hence $z\not=x$.
By continuity of $K$ there is an extension $w$ of $w_{i}$ such that
$\psi_-^{A\times B}(p)=\{x\}$ for $p:=w\widehat{0}$ and
$[\psi_-^A,\psi_-^B]K(p)\In(L_i,R_i)$ (where the inclusion is meant component wise).
Hence any realizer $G$ of  $\C_A\times\C_B$ selects an element $y=[\delta_A,\delta_B]GK(p)\in L_i\times R_i$
and thus $[\delta_A,\delta_B]HGK(p)\not=x$ in contrast to the fact that
$HGK$ is supposed to be a realizer of $\C_{A\times B}$.
Contradiction!
\end{proof}

Alternatively, one could prove this result by considering the level of the respective
operations, a concept that has been introduced by Hertling \cite{Her96}.
For one, one can prove directly $\MLPO_{n+1}\times\MLPO_{k+1}\leqW\LPO^{n+k}$, which
implies that $n+k$ is an upper bound on the level of $\MLPO_{n+1}\times\MLPO_{k+1}$.
On the other hand, $\LPO^{(n+1)(k+1)-1}$ can be reduced to any realizer of $\MLPO_{(n+1)(k+1)}$
(see Theorem~5.2.2 in \cite{Wei92c}), which implies that the level of $\MLPO_{(n+1)(k+1)}$ is at least $(n+1)(k+1)-1$.
Since Hertling proved that the level is preserved
downwards by Weihrauch reducibility, the desired result follows also from these observations.
We do not work out the details here.
For the simplest case of the set $\{0,1\}$ we get the following conclusion.

\begin{corollary}
$\C_{\{0,1\}}\times\C_{\{0,1\}}\lW\C_{\{0,1\}\times\{0,1\}}$.
\end{corollary}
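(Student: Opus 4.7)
The plan is to observe that this corollary is the special case $n=k=1$ of Proposition~\ref{prop:finite}. Setting $A=B=\{0,1\}$, both sets are finite and have at least two elements, and are naturally equipped with the discrete representation and topology as required by the hypothesis of the proposition. Thus Proposition~\ref{prop:finite} directly yields $\C_{\{0,1\}}\times\C_{\{0,1\}}\lW\C_{\{0,1\}\times\{0,1\}}$, and nothing further needs to be shown.

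Since there is really no new mathematical content to prove, I would not invoke the machinery of levels sketched in the remark after Proposition~\ref{prop:finite}, even though it would independently establish the result (via $\MLPO_2\times\MLPO_2\leqW\LPO^2$ while $\MLPO_4$ majorizes $\LPO^3$). The cleanest exposition is simply to cite Proposition~\ref{prop:finite}. If desired, one could sanity-check the cardinality estimate in the key inequality of the proof of Proposition~\ref{prop:finite}: for $n=k=1$, one has $n+k+1=3<4=(n+1)(k+1)$, which is exactly what rules out the reduction, corresponding to the fact that there exist decreasing chains of non-empty subsets of $\{0,1\}\times\{0,1\}$ of length $4$ while chains that strictly decrease on only one side at each step have length at most $3$.

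The only potential obstacle would be a reader worrying whether $\{0,1\}$ as a discrete two-element space is the same object as the space $A=\{0,1\}$ in Proposition~\ref{prop:finite}; this is resolved by noting that the proposition's proof builds in this identification via the explicit representation $\delta_A(n\widehat{0})=n$ and the negative-information representation $\psi_-^A$, both of which are computably equivalent to the generic representations attached to a finite discrete space. Hence the corollary requires no separate argument.
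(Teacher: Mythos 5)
Your proposal is correct and matches the paper exactly: the corollary is stated in the paper immediately after Proposition~\ref{prop:finite} with no further argument, precisely as the instantiation $A=B=\{0,1\}$. Your sanity check of the inequality $n+k+1=3<4=(n+1)(k+1)$ and the remark about representations are harmless elaborations of the same point.
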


We will see, however, that for many infinite spaces
we get a nicer behavior of products.
This is partially due to the following result.

\begin{proposition}[Surjections]
\label{prop:surjection}
Let $A$ and $B$ be represented spaces and let $s:\In A\to B$
be a computable surjection with a co-c.e.\ closed domain $\dom(s)$. Then $\C_B\leqW\C_A$.
\end{proposition}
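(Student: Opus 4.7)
The plan is to exhibit a strong Weihrauch reduction where the input transformation sends a closed set $T\In B$ to its preimage $T':=s^{-1}(T)\cap\dom(s)\In A$, and the output transformation sends a chosen point $a\in T'$ to $s(a)\in T$.

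First I would check that the assignment $T\mapsto T'$ makes sense as a map $\AA_-(B)\to\AA_-(A)$ with the right properties. Since $s$ is single-valued and surjective, $T$ non-empty implies $s^{-1}(T)\cap\dom(s)$ non-empty, so $T'\in\dom(\C_A)$. The set $T'$ is closed in $A$ because its complement
\[ A\sm T' = (A\sm\dom(s))\cup s^{-1}(B\sm T) \]
is open: the first summand is open since $\dom(s)$ is a co-c.e.\ closed subset of $A$, and the second summand is open since $s$ is continuous and $B\sm T$ is open.

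The main computability step is to show that $T\mapsto T'$ is computable as a map $\AA_-(B)\to\AA_-(A)$, i.e.\ that a $\psi_-^A$-name of $T'$ can be computed from a $\psi_-^B$-name of $T$. Equivalently, we need a $\delta_A^\circ$-name of $A\sm T'$ computable from a $\delta_B^\circ$-name of $B\sm T$. For this I would use two standard facts about represented spaces: (i) the preimage operation $U\mapsto s^{-1}(U)$ on open sets is computable whenever $s$ is a computable function between represented spaces (the representation of open sets is precisely chosen so that this preimage operation is computable, since open sets are characterized by continuity of their characteristic function into $\IS$); and (ii) binary union of open sets is computable. Together with the fact that a $\psi_-^A$-name of $\dom(s)$ is computable by assumption (yielding a $\delta_A^\circ$-name of $A\sm\dom(s)$), these allow us to compute a $\delta_A^\circ$-name of $(A\sm\dom(s))\cup s^{-1}(B\sm T)$.

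Given the above, the reduction is immediate: apply $\C_A$ to $T'$ to obtain some $a\in T'$, then output $s(a)\in T$, which is computable since $s$ is computable and $a\in\dom(s)$. I expect no serious obstacle beyond writing out carefully why the preimage operation on open sets is computable for arbitrary computable $s$; this is essentially the cartesian-closed structure of represented spaces (the exponential $[\delta_B\to\delta_\IS]$ composed with $s$ yields $[\delta_A\to\delta_\IS]$), so it is a uniform consequence of $s$ being computable rather than merely continuous, and works without any further assumptions on $A$ or $B$.
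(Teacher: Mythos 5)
Your proposal is correct and follows essentially the same route as the paper: map a closed $T\In B$ to its preimage in $A$ (the paper writes $S:\AA_-(B)\to\AA_-(A),\,M\mapsto s^{-1}(M)$, implicitly intersected with $\dom(s)$ since $s$ is partial), show $S$ is computable using computability of $s$ and co-c.e.\ closedness of $\dom(s)$, then observe $\C_B=s\circ\C_A\circ S$. You simply spell out more explicitly why $s^{-1}(M)\cap\dom(s)$ is closed and why $S$ is computable.
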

\begin{proof}
If $s:\In A\to B$ is computable and $\dom(s)$ is co-c.e.\ closed in $A$,
then $S:\AA_-(B)\to\AA_-(A),M\mapsto s^{-1}(M)$
is computable too and if $s$ is surjective, then we obtain $\C_B=s\circ\C_A\circ S$, i.e.\ $\C_B\leqW\C_A$.
\end{proof}

As a consequence of this observation and Proposition~\ref{prop:product} we obtain the following sufficient criterion
for idempotency of choice.

\begin{corollary}[Idempotency]
\label{cor:idempotent}
Let $A$ be a represented space. If there is a computable surjection
$s:A\to A^2$, then $\C_A\times\C_A\equivW\C_{A\times A}\equivW\C_A$ and, in particular,
$\C_A$ is idempotent and hence also $\C_A^*\equivW\C_A$.
\end{corollary}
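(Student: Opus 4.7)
The plan is to chain together the two propositions we have just established with the general fact about pointed multi-valued functions.

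First I would apply Proposition~\ref{prop:product} to obtain the reduction $\C_A\times\C_A\leqW\C_{A\times A}$ directly. Next I would note that the assumption gives a total computable surjection $s:A\to A^2$; since $\dom(s)=A$ is trivially co-c.e.\ closed in $A$, Proposition~\ref{prop:surjection} (applied with $B:=A\times A$) yields $\C_{A\times A}\leqW\C_A$. Putting these together:
\[
\C_A\times\C_A\;\leqW\;\C_{A\times A}\;\leqW\;\C_A.
\]

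For the reverse direction, I would use pointedness of $\C_A$ (guaranteed by the Pointedness Lemma since the hypothesis forces $A$ to be non-empty). Given a computable point $a_0\in A$, one can feed any input $M\in\AA_-(A)$ into the first coordinate of $\C_A\times\C_A$ together with the (computable) closed set $\{a_0\}$ in the second coordinate, and then project onto the first coordinate to recover $\C_A$. This shows $\C_A\leqW\C_A\times\C_A$, so all three operations are Weihrauch-equivalent.

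The equivalence $\C_A\times\C_A\equivW\C_A$ is precisely the definition of idempotency, so $\C_A$ is idempotent. Finally, since $\C_A$ is both pointed and idempotent, the observation recalled just before the statement (that for pointed functions idempotency is equivalent to $f\equivW f^*$) immediately gives $\C_A^*\equivW\C_A$. There is no real obstacle here; the only minor thing to keep in mind is that Proposition~\ref{prop:surjection} requires the domain of the surjection to be co-c.e.\ closed, which is automatic when $s$ is total.
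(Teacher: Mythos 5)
Your overall strategy matches the paper's: combine Proposition~\ref{prop:product} with Proposition~\ref{prop:surjection} to close the cycle $\C_A\leqW\C_A\times\C_A\leqW\C_{A\times A}\leqW\C_A$, then read off idempotency and invoke the pointedness remark to get $\C_A^*\equivW\C_A$. The only place you deviate is in how you justify $\C_A\leqW\C_A\times\C_A$, and there is a small technical slip there.

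You propose to pad the second coordinate with the ``(computable) closed set $\{a_0\}$'' for a computable point $a_0\in A$. In an arbitrary represented space, $\{a_0\}$ need not be closed at all (the induced topology need not be $T_1$ --- think of Sierpi\'nski space), let alone co-c.e.\ closed, so this input is not a legitimate element of $\dom(\C_A)$ in general. The correct and simpler choice is to pad with $A$ itself: $A$ is always a co-c.e.\ closed subset of itself, which is exactly the observation behind the Pointedness Lemma. Feeding $(M,A)$ into $\C_A\times\C_A$ and projecting onto the first coordinate then gives the reduction; equivalently one can cite the first half of Proposition~\ref{prop:product} together with $\C_A\leqW\C_A\coprod\C_A$, which is the route the paper takes. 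With that one-line repair the argument is complete. (A side remark: the hypothesis of a surjection $s:A\to A^2$ does not by itself force $A\neq\emptyset$, since the empty map vacuously qualifies; non-emptiness is a tacit standing assumption here, as without it the conclusion $\C_A^*\equivW\C_A$ would fail.)
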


Since the spaces $\IN$, $\Cantor$, $\Baire$ and $\IN\times\Cantor$
admit computable and bijective pairing functions, we get the following conclusion.

\begin{corollary}
The choice principles $\C_\IN$, $\C_{\Cantor}$,
$\C_{\Baire}$ and $\C_{\IN\times\Cantor}$
are idempotent.
\end{corollary}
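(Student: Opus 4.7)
The plan is to apply Corollary~\ref{cor:idempotent} directly: it suffices to exhibit, for each of the four spaces $A \in \{\IN, \Cantor, \Baire, \IN\times\Cantor\}$, a computable surjection $s : A \to A^2$. In fact in every case a computable bijection is available, and the corollary will then hand us $\C_A \times \C_A \equivW \C_A$, hence idempotency and $\C_A^* \equivW \C_A$.

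First I would note the pairings that are already part of the background. The Cantor pairing $c : \IN^2 \to \IN$ is a computable bijection, so its inverse is a computable bijection $\IN \to \IN^2$. For Baire space, the canonical effective pairing $\langle\cdot,\cdot\rangle : \Baire \times \Baire \to \Baire$ recalled in Section~2 is bijective and effectively invertible, so its inverse serves as a computable surjection $\Baire \to \Baire^2$. For Cantor space, the even/odd interleaving $p \mapsto \bigl((p(2n))_{n\in\IN},\, (p(2n+1))_{n\in\IN}\bigr)$ is a computable bijection $\Cantor \to \Cantor^2$.

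For $\IN \times \Cantor$ I would just combine the preceding two pairings: the map
\[
(n,p) \longmapsto \bigl(c^{-1}(n),\, (p(2\cdot\_)),\, (p(2\cdot\_+1))\bigr) \;\in\; \IN^2 \times \Cantor^2 \;=\; (\IN \times \Cantor)^2
\]
is a computable bijection, after the obvious reassociation identifying $(\IN\times\Cantor)^2$ with $\IN^2\times\Cantor^2$ via the canonical product representation.

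Applying Corollary~\ref{cor:idempotent} in each of the four instances then yields the claim. There is no real obstacle; the substantive content sits already in Corollary~\ref{cor:idempotent} (which in turn rests on Proposition~\ref{prop:product} and Proposition~\ref{prop:surjection}), so the proof reduces to citing the standard pairings on $\IN$, $\Cantor$, and $\Baire$ and combining the latter two for the product space.
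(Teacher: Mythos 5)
Your proof is correct and takes exactly the paper's approach: the paper proves this corollary by observing that $\IN$, $\Cantor$, $\Baire$ and $\IN\times\Cantor$ all admit computable bijective pairing functions and then invoking Corollary~\ref{cor:idempotent}. You simply spell out the standard pairings (Cantor pairing, even/odd interleaving, the canonical Baire pairing, and the obvious combination for the product), which the paper leaves implicit.
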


We close this section with the following example that shows that
in some cases choice commutes with parallelization and finite
parallelization and in other cases it does not.

\begin{example}
\label{ex:parallelization}
We obtain $\widehat{\C_{\{0,1\}}}\equivW\widehat{\LLPO}\equivW\C_{\Cantor}$, but $\widehat{\C_\IN}\equivW\lim\lW\C_{\Baire}$ and
$\C_\IN^*\equivW\C_\IN\equivW\C_{\IN^*}$, but
$\C_{\{0,1\}}^*\equivW\LLPO^*\lW\C_\IN\equivW\C_{\{0,1\}^*}$.
\end{example}

\section{Choice on Computable Metric Spaces}

In this section we want to study choice on certain large classes of computable
metric spaces. We recall that a {\em computable metric space} $(X,d,\alpha)$
is a separable metric space $(X,d)$ together with a numbering $\alpha:\IN\to X$
of a countable dense subset with respect to which the metric is computable.
By a {\em computable Polish space} we mean a computable metric space that is
also complete.
Usually, we will assume that computable metric spaces are represented
by their Cauchy representations $\delta_X$ (see \cite{Wei00}).
We use two different representation $\kappa_-$ and $\kappa$ to represent
the set $\KK(X)$ of compact subsets of a computable metric space $X$ (see \cite{BP03} for details).
Roughly speaking, a $\kappa_-$--name of a compact set $K\In X$ is a list of all finite covers
of $K$ by rational open balls, whereas a $\kappa$--name comes with the additional requirement
that all open balls in the cover actually have non-empty intersection with $K$.
That is, $\kappa_-$ provides negative information on the set $K$ (each cover allows
to exclude points) and $\kappa$ provides full information (each ball in the cover meets the set).
By $\KK_-(X)$ and $\KK(X)$ we denote the set of compact subsets represented by $\kappa_-$ and $\kappa$, respectively.
The compact sets that are computable with respect to $\kappa_-$ and $\kappa$
are called {\em co-c.e.\ compact} and {\em computably compact}, respectively.
We mention that a computable metric space is computably compact in itself if and
only if it is co-c.e.\ compact in itself.

Computable Polish spaces $X$ admit total computable and admissible representations $\delta:\Baire\to X$
(see, for instance, Corollary~4.4.12 in \cite{Bra99bx})
and computably compact computable metric spaces $X$ admit computable representations
$\delta:\Cantor\to X$ as we will prove next.
Two representations $\delta_1,\delta_2$ of the same set are said to be {\em (computably) reducible}
to each other, in symbols $\delta_1\leq\delta_2$, if there exists a computable function $F:\In\IN^\IN\to\IN^\IN$
such that $\delta_1=\delta_2\circ F$. Moreover, $\delta_1$ and $\delta_2$ are said to be
{\em (computably) equivalent}, in symbols $\delta_1\equiv\delta_2$, if $\delta_1\leq\delta_2$ and $\delta_2\leq\delta_1$ hold.
We recall that a representation of a computable metric space
is called {\em computably admissible}
if it is computably equivalent to the Cauchy representation of the space.

\begin{proposition}
\label{prop:compact-cantor}
Let $X$ be a computably compact computable metric space.
Then there is a surjective computable map $\varphi:\{0,1\}^\IN\to X$ that is also computably admissible.
\end{proposition}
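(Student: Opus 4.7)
The plan is to build $\varphi$ through a binary encoding of paths in a decidable tree of ball approximations. By computable compactness, for each $n\in\IN$ we may compute a finite list of indices $c_{n,0},\ldots,c_{n,m_n-1}\in\IN$ such that the balls $B(\alpha(c_{n,j}),2^{-n-3})$ with $j<m_n$ cover $X$. Consider the tree
\[T:=\bigl\{(j_0,\ldots,j_{n-1}):n\in\IN,\;j_i<m_i,\;X\cap\bigcap_{i<n}B(\alpha(c_{i,j_i}),2^{-i-2})\neq\emptyset\bigr\}.\]
Two observations are crucial. First, for a computably compact $X$ the predicate ``the effective open set $U$ meets $X$'' is decidable: it is semi-decidable via dense points $\alpha(k)\in U\cap X$, and its negation is semi-decidable because $U\cap X=\emptyset$ implies $X\In\{y:d(y,U)>\varepsilon\}$ for some rational $\varepsilon>0$, a c.e.\ open containment which computable compactness allows us to confirm. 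Hence $T$ is decidable. Second, every node of $T$ is extendible, since any witness $x\in X$ for a level-$n$ node lies in some level-$(n+1)$ cover ball.

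For each infinite path $(j_n)_n\in[T]$, consecutive centres $\alpha(c_{n,j_n})$ and $\alpha(c_{n+1,j_{n+1}})$ both lie within $2^{-n-2}$ of a common point of $X$, so together they form a rapidly Cauchy sequence converging to a unique point of the complete space $X$; this yields a computable surjection $\pi:[T]\to X$. To encode paths by binary sequences, set $b_n:=\lceil\log_2 m_n\rceil$ and read $p\in\Cantor$ in successive blocks of lengths $b_0,b_1,\ldots$. Block $n$ decodes to an integer $e_n<2^{b_n}$. Define $j_n:=e_n$ if $e_n<m_n$ and $(j_0,\ldots,j_{n-1},e_n)\in T$, and otherwise take $j_n$ to be the least $j<m_n$ with $(j_0,\ldots,j_{n-1},j)\in T$, which exists by extendibility. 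Setting $\varphi(p):=\pi((j_n)_n)$ produces a total computable surjection $\varphi:\Cantor\to X$.

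For computable admissibility, $\varphi\leq\delta_X$ is immediate from computability of $\varphi$. For the converse $\delta_X\leq\varphi$, a $\delta_X$-name $q$ of $x\in X$ is converted to a $\varphi$-name as follows: at level $n$ search for some $j<m_n$ and stage $m\geq n+4$ with $d(\alpha(c_{n,j}),\alpha(q(m)))<2^{-n-3}$, which exist by the cover property and, via the triangle inequality, witness $x\in B(\alpha(c_{n,j}),2^{-n-2})$, so that $(j_0,\ldots,j_n)\in T$; encode each chosen $j_n$ as a $b_n$-bit block. The main technical obstacle is establishing decidability of $T$, which rests on the dual semi-decidability of ``effective open meets $X$'' provided by computable compactness; the block-encoding and Cauchy-convergence steps are routine thereafter.
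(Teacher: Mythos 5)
Your approach differs genuinely from the paper's: the paper delegates the hard part to Corollary~4.6 of Weihrauch's 2003 work on computably proper and admissible restrictions of the Cauchy representation to $\{0,1\}^\IN$, and then only needs to construct a computable total $\iota$ hitting the domain of that retraction; you instead build the representation from scratch via a tree of cover-ball intersections and a block encoding. That route can in principle work, but the crucial step is not correct as written.

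The gap is the claimed decidability of $T$. You argue that ``$U\cap X=\emptyset$'' is semi-decidable because it ``implies $X\In\{y:d(y,U)>\varepsilon\}$ for some rational $\varepsilon>0$''. This is false for finite intersections of open rational balls. What computable compactness does give you is semi-decidability of $X\In\bigcup_i\{y:d(y,\alpha(c_{i,j_i}))>2^{-i-2}\}$, i.e.\ emptiness of the intersection of the \emph{closed} balls $\bigcap_i\{y:d(y,\alpha(c_{i,j_i}))\le 2^{-i-2}\}$. But the open intersection $\bigcap_i B(\alpha(c_{i,j_i}),2^{-i-2})$ may be empty while the corresponding closed intersection is not, and then neither your positive test (no dense witness exists) nor your negative test (the open cover of complements of closed balls fails to cover) ever fires. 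Concretely: in $X=[0,1]$ with cover centres $0$ and $1$ at radius $1/2$, the open intersection $B(0,1/2)\cap B(1,1/2)=\emptyset$, yet the closed balls share the point $1/2$, so the semi-decision for emptiness never halts. Since such boundary coincidences cannot be ruled out in a general computably compact metric space, $T$ is only c.e., not decidable, and your decoding algorithm --- which must inspect membership of finitely many candidate children of a node and pick the least admissible one --- is not effective.

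The standard remedy is to work with a ``fuzzy'' two-radius test: one radius for a semi-decision of nonemptiness and a strictly larger one for a semi-decision of (strong) emptiness, with the construction designed to tolerate an undetermined outcome in the gap. This requires reworking the tree so that one does not need to determine the \emph{least} admissible child (e.g.\ dovetail over the children and accept the first one whose nonemptiness is confirmed), and a corresponding adjustment in the Cauchy argument so that the path still converges to a point of $X$ even when some ``admitted'' nodes have empty open intersections. None of this is routine, and it is precisely the technical core that the paper sidesteps by citing Weihrauch's result.
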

\begin{proof}
Let $(X,d,\alpha)$ be a computably compact computable metric space.
We use a version $\delta_X:\In\{0,1\}^\IN\to X$ of the Cauchy representation, defined as follows
\[\delta_X(01^{n_0+1}01^{n_1+1}0...):=\lim_{i\to\infty}\alpha(n_i)\]
where $\dom(\delta_X)$ contains only those sequences of the given type which,
additionally, converge rapidly, i.e.\ such that $d(\alpha(n_i),\alpha(n_j))<2^{-j}$
for all $i\geq j$.
It is known that there exists a computably proper
and computably admissible representation $\delta:\In\{0,1\}^\IN\to X$
that is a restriction of $\delta_X$, see Corollary~4.6 in \cite{Wei03}.
Such a map is, in particular, computable and surjective and the fact that it
is computably proper implies that
$\delta^{-1}(K)$ is co-c.e.\ compact for any co-c.e.\ compact $K\In X$.
If $X$ itself is co-c.e.\ compact, then $A:=\dom(\delta)=\delta^{-1}(X)$
is also co-c.e.\ compact.
We claim that there is a total computable map $\iota:\{0,1\}^\IN\to\{0,1\}^\IN$
such that $A\In\range(\iota)\In\dom(\delta_X)$. A machine computing $\iota$ works as follows:
given an input $p\in\{0,1\}^\IN$
the machine checks in steps longer and longer prefixes $w$ of $p$
for the property
\begin{eqnarray}
\label{eqn:compact-cantor}
w\{0,1\}^\IN\In\{0,1\}^\IN\setminus A.
\end{eqnarray}
Since $A$ is co-c.e.\ closed, this property is c.e. in $w$. As long as the property cannot be verified, the machine
simultaneously checks whether the input is of the form $p=01^{n_0+1}01^{n_1+1}0...$
and whether the property $d(\alpha(n_i),\alpha(n_j))<2^{-j}$
is satisfied for all $i\geq j$ such that $01^{n_i+1}0$ is completely included in $w$.
If the latter property is positively verified, then the output is extended such that
it matches $p$ up to the corresponding $01^{n_i+1}0$. If, at any time, property (\ref{eqn:compact-cantor})
is positively verified, then it is clear that $p\not\in A$ and the processing of the input is
stopped and the output is extended just by infinitely many repetitions of the
last block $1^{n_i+1}0$ (if no block has been written at this stage, then an arbitrary block $01^{n_0+1}$ is repeated
infinitely often as output).
If the input $p$ is not of the form $p=01^{n_0+1}01^{n_1+1}0...$, then the test for property (\ref{eqn:compact-cantor})
will eventually be positive.
It is clear that altogether this machine computes a function $\iota:\{0,1\}^\IN\to\{0,1\}^\IN$
such that $A\In\range(\iota)\In\dom(\delta_X)$. This guarantees that $\varphi:=\delta_X\circ\iota$
is computable, total and surjective.
Since $\dom(\delta)=A\In\range(\iota)$ it follows that $\delta=\varphi\circ\iota^{-1}$.
Since $\iota$ is computable, also $\iota^{-1}$ is computable (see Corollary~\ref{cor:compact-inversion})
and hence it follows that $\varphi$ is computably admissible.
\end{proof}

Hence we obtain the following corollary. The first statement is a consequence of Proposition~\ref{prop:surjection}
and the second statement a consequence of the previous Proposition~\ref{prop:compact-cantor}.

\begin{corollary}
\label{cor:polish-compact}
Let $X$ be a computable Polish space. Then $\C_X\leqW\C_{\Baire}$.
If, additionally, $X$ is computably compact, then $\C_X\leqW\C_{\Cantor}$.
\end{corollary}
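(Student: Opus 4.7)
The plan is to apply Proposition~\ref{prop:surjection} in both cases, so the whole task reduces to exhibiting total computable surjections from $\Baire$ (respectively $\Cantor$) onto $X$.

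For the first part, I would invoke the standard fact, cited immediately before Proposition~\ref{prop:compact-cantor}, that every computable Polish space $X$ admits a total computable admissible representation $\delta:\Baire\to X$. Viewed as a map from $\Baire$ (with the identity representation) to $X$ (equipped with its Cauchy representation), admissibility ensures computability, and $\delta$ is surjective by virtue of being a representation. Its domain is all of $\Baire$, hence trivially co-c.e.\ closed in $\Baire$. Proposition~\ref{prop:surjection} applied with $A=\Baire$, $B=X$, $s=\delta$ then yields $\C_X\leqW\C_{\Baire}$.

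For the second part, if $X$ is additionally computably compact, then Proposition~\ref{prop:compact-cantor} supplies a total computable surjection $\varphi:\Cantor\to X$, which is moreover computably admissible and hence a computable map from $\Cantor$ to $(X,\delta_X)$ in the relevant sense. Since $\dom(\varphi)=\Cantor$ is trivially co-c.e.\ closed in $\Cantor$, Proposition~\ref{prop:surjection} applied with $A=\Cantor$, $B=X$, $s=\varphi$ gives $\C_X\leqW\C_{\Cantor}$.

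I expect no real obstacle: the substantive content is already in Proposition~\ref{prop:compact-cantor} (and in the corresponding Baire-space fact quoted just above it), while Proposition~\ref{prop:surjection} turns any such surjection into the desired reduction in one line. The only point worth verifying carefully is that the representations of $X$ used to phrase $\C_X$ are exactly the ones with respect to which $\delta$ and $\varphi$ are computable, which is immediate from computable admissibility.
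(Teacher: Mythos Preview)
Your proposal is correct and follows exactly the approach indicated in the paper: the first statement comes from applying Proposition~\ref{prop:surjection} to the total computable admissible representation $\delta:\Baire\to X$, and the second from applying it to the surjection $\varphi:\Cantor\to X$ furnished by Proposition~\ref{prop:compact-cantor}. If anything, you are slightly more explicit than the paper, which only mentions Proposition~\ref{prop:compact-cantor} for the second part and leaves the final application of Proposition~\ref{prop:surjection} implicit.
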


We say that $\iota:A\to B$ is a {\em computable embedding}, if $\iota$
is computable and injective and its partial inverse $\iota^{-1}$ is computable too.
Now we can use the Embedding Theorem~3.7 from \cite{BG09} in order to
obtain the following proposition.

\begin{proposition}
\label{prop:embedding}
Let $A$ and $B$ be computable metric spaces and let $\iota:A\to B$ be
a computable embedding such that $\range(\iota)$ is co-c.e.\ closed in $B$.
Then $\C_A\leqW\C_B$.
\end{proposition}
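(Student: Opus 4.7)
The plan is to set up a strong Weihrauch reduction via the forward-image map along $\iota$. Given a nonempty closed set $M\in\AA_-(A)$, I would compute its image $\iota(M)\In B$ as a name in $\AA_-(B)$, feed this into $\C_B$ to obtain a point $b\in\iota(M)$, and then output $\iota^{-1}(b)\in M$, which is well defined since $\iota$ is injective and computable from $b$ since $\iota^{-1}$ is computable on $\range(\iota)$.

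The first step is to verify that $\iota(M)$ is genuinely a closed subset of $B$. Because $\iota$ is a topological embedding, $\iota(M)$ is closed in the subspace $\range(\iota)$ equipped with the induced topology. Since $\range(\iota)$ is assumed to be co-c.e.\ closed in $B$, it is in particular closed in $B$, so transitivity of the ``closed in'' relation yields that $\iota(M)$ is closed in $B$. Moreover $\iota(M)\neq\emptyset$ whenever $M\neq\emptyset$, so $\iota(M)\in\dom(\C_B)$.

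The second step is to argue that the forward-image map
\[\Phi:\AA_-(A)\to\AA_-(B),\quad M\mapsto\iota(M)\]
is computable. This is precisely the content of the Embedding Theorem~3.7 of \cite{BG09}, applied to the computable embedding $\iota$ with co-c.e.\ closed range. Intuitively, a $\psi_-^B$-name of $\iota(M)$ is obtained by interleaving a $\psi_-^B$-name of the complement of $\range(\iota)$ (available by hypothesis) with the translation of a $\psi_-^A$-name of $A\sm M$ into open-ball covers of $\iota(A\sm M)$ inside $\range(\iota)$, which is possible because $\iota^{-1}$ is computable on $\range(\iota)$.

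Putting these pieces together, the reduction is realized as follows. Take $H$ to be a realizer of $\Phi$, so that $H(p)$ names $\iota(\psi_-^A(p))\in\AA_-(B)$ whenever $p$ names $M$. Any realizer $G$ of $\C_B$ then produces a name $q=G(H(p))$ of some $b\in\iota(M)$. Let $K$ be a computable realizer of the restriction $\iota^{-1}:\range(\iota)\to A$; then $K(q)$ names $\iota^{-1}(b)\in M$. Thus $K\circ G\circ H$ is a realizer of $\C_A$, establishing $\C_A\leqSW\C_B$ and in particular $\C_A\leqW\C_B$. The one genuine difficulty is the invocation of Embedding Theorem~3.7: if that theorem is stated for a slightly different class of represented spaces than computable metric spaces, a small preliminary reduction will be needed to bring our setting into its scope, but otherwise it applies directly.
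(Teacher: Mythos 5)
Your proposal is correct and follows essentially the same route as the paper: it invokes the Embedding Theorem~3.7 of \cite{BG09} to show that the forward-image map $M\mapsto\iota(M)$ is computable from $\AA_-(A)$ to $\AA_-(B)$, and then factors $\C_A=\iota^{-1}\circ\C_B\circ J$. The extra checks you include (that $\iota(M)$ is closed in $B$ and nonempty) and the observation that the reduction is even a strong one are correct but not essential additions.
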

\begin{proof}
From Theorem~3.7 in \cite{BG09} it follows that for a computable embedding
$\iota:A\to B$ with co-c.e.\ closed range $\iota(A)$ the map
$J:\AA_-(A)\to\AA_-(B),M\mapsto\iota(M)$ is computable.
We obtain $\C_A=\iota^{-1}\circ\C_B\circ J$ and hence $\C_A\leqW\C_B$.
\end{proof}

We recall that a metric space is called perfect, if it has no isolated points.
In Proposition~6.2 in \cite{BG09} it has been proved that any non-empty
perfect computable Polish space is rich, i.e.\ admits a computable
embedding $\iota:\{0,1\}^\IN\to X$ and in this case $\range(\iota)$ is
automatically co-c.e.\ closed. Hence we obtain the following corollary.

\begin{corollary}
Let $X$ be a computable Polish space. If $X$ is rich and, in particular, if
$X$ is non-empty and has no isolated points, then $\C_{\Cantor}\leqW\C_X$.
\end{corollary}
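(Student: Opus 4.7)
The plan is to invoke the preceding Proposition~\ref{prop:embedding} with $A=\Cantor$ and $B=X$. The whole point of the corollary is that the hypothesis ``rich'' has been designed precisely so that it supplies the input required by that proposition, so the proof should be essentially a one-line composition of cited results.

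First, I would unpack the definition of rich: by the definition recalled in the paragraph just before the corollary, a rich space admits a computable embedding $\iota:\Cantor\to X$, and it is stated there that in this case $\range(\iota)$ is automatically co-c.e.\ closed in $X$. Thus the hypotheses of Proposition~\ref{prop:embedding} are satisfied with $A:=\Cantor$, $B:=X$, and the map $\iota$, yielding directly
\[\C_\Cantor\leqW\C_X.\]
This handles the ``rich'' case.

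Second, for the ``in particular'' clause, I would appeal to Proposition~6.2 of \cite{BG09} as quoted in the paragraph preceding the corollary: any non-empty perfect computable Polish space is rich. Since ``perfect'' means ``no isolated points,'' a non-empty computable Polish space with no isolated points is rich, and the previous paragraph applies.

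There is essentially no obstacle here; the corollary is a straightforward packaging of Proposition~\ref{prop:embedding} together with the cited richness result from \cite{BG09}. The only thing one has to be careful about is that Proposition~\ref{prop:embedding} demands the range of the embedding to be co-c.e.\ closed, but this is exactly the extra property mentioned as automatic in the paragraph just above the corollary. No additional computation is required.
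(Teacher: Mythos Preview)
Your proposal is correct and follows exactly the route the paper takes: the corollary is stated without proof, relying on the preceding paragraph's citation of Proposition~6.2 in \cite{BG09} (non-empty perfect computable Polish spaces are rich, with the range of the embedding automatically co-c.e.\ closed) together with Proposition~\ref{prop:embedding}.
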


Together with Corollary~\ref{cor:polish-compact} we get the following
corollary (which has essentially been proved in \cite{GM09} already).

\begin{corollary}
Let $X$ be a computably compact metric space, which is non-empty and has no isolated points,
then $\C_\Cantor\equivW\C_X$.
\end{corollary}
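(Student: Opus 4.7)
The plan is to prove the equivalence by sandwiching $\C_X$ between two copies of $\C_\Cantor$, using the two immediately preceding corollaries. The key preliminary observation is that a computably compact computable metric space $X$ is automatically complete (since every compact metric space is complete as a topological/uniform space), and hence is a computable Polish space. This places $X$ within the scope of both preceding corollaries.

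For the reduction $\C_X \leqW \C_\Cantor$, I would invoke the second statement of Corollary~\ref{cor:polish-compact} directly: $X$ is a computably compact computable Polish space, so $\C_X \leqW \C_\Cantor$ is immediate. For the reverse reduction $\C_\Cantor \leqW \C_X$, I would invoke the preceding corollary: since $X$ is a non-empty computable Polish space with no isolated points, it is rich (by Proposition~6.2 of \cite{BG09} as cited), and therefore $\C_\Cantor \leqW \C_X$.

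Combining both reductions yields $\C_\Cantor \equivW \C_X$, as desired. There is essentially no obstacle here; the result is a clean corollary of the two preceding ones. The only point worth flagging explicitly in the write-up is the implicit step that computable compactness together with the computable metric space structure delivers a computable Polish space, so that both preceding corollaries apply without further hypotheses. This also explains the earlier parenthetical remark that the result has essentially been proved in \cite{GM09}: once the right framework (closed choice, richness, computable compactness) is in place, the equivalence follows without additional work.
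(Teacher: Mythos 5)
Your proof is correct and is exactly the argument the paper intends: combine the second part of Corollary~\ref{cor:polish-compact} with the immediately preceding richness corollary, noting that a compact metric space is complete and hence a computably compact computable metric space is a computable Polish space. This matches the paper's one-line remark that the statement follows "together with Corollary~\ref{cor:polish-compact}."
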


Thus, $\C_\Cantor$ can be identified with ``compact choice'' for a
very large class of compact spaces. In particular, we obtain the following corollary.

\begin{corollary}
$\C_\Cantor\equivW\C_{[0,1]}\equivW\C_{[0,1]^\IN}$.
\end{corollary}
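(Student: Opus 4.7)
The plan is to derive this corollary as a direct application of the preceding corollary, which states that $\C_\Cantor\equivW\C_X$ for any computably compact metric space $X$ that is non-empty and has no isolated points. Hence the task reduces to verifying these hypotheses for $X=[0,1]$ and for $X=[0,1]^\IN$.

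First I would handle $[0,1]$. Equipped with the standard Euclidean metric and the numbering of dyadic (or all) rationals in $[0,1]$, it is a computable metric space; it is non-empty and contains no isolated points since every point is a limit of rationals in $[0,1]$ distinct from it. Computable compactness follows from the fact that for any finite rational $\varepsilon>0$, a finite $\varepsilon$-cover by rational balls can be computed uniformly, and any such cover can be pruned to those balls actually meeting $[0,1]$, giving the stronger $\kappa$-information. Thus the previous corollary applies and yields $\C_\Cantor\equivW\C_{[0,1]}$.

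Next I would handle the Hilbert cube $[0,1]^\IN$. Equipped with a standard product metric such as $d(x,y):=\sum_{n=0}^\infty 2^{-n-1}|x_n-y_n|$ and the natural dense sequence of rational tuples that are eventually $0$, it is a computable metric space; it is clearly non-empty, and it has no isolated points because for any $x$ and any $\varepsilon>0$ one can perturb some sufficiently late coordinate to obtain a distinct nearby point. Computable compactness of $[0,1]^\IN$ is the computable Tychonoff property for a product of a uniformly computably compact family: finite rational covers with the $\kappa$-property can be produced coordinate-wise from those of $[0,1]$ and assembled, using the exponentially decaying weights to truncate at a depth depending on the requested precision. Invoking the previous corollary a second time gives $\C_\Cantor\equivW\C_{[0,1]^\IN}$, and chaining the two equivalences completes the proof.

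The main obstacle, such as it is, lies only in the computable compactness of $[0,1]^\IN$: one must specify the metric and dense sequence carefully so that the truncation argument for $\kappa$-covers is uniform. Everything else follows at once from the previous corollary, so no new Weihrauch-theoretic argument is needed.
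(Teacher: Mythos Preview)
Your proposal is correct and follows exactly the route the paper intends: the corollary is stated immediately after the result that $\C_\Cantor\equivW\C_X$ for any non-empty computably compact metric space without isolated points, and is meant to be read as a direct instance of it for $X=[0,1]$ and $X=[0,1]^\IN$. Your verification of the hypotheses (computable compactness, perfectness) is the only content needed, and the paper leaves precisely this implicit.
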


We would like to show that $\C_\IN\times\C_\Cantor$ plays a similar role for locally compact spaces
as $\C_\Cantor$ does for compact spaces.
The following lemma plays a role in the proof of the next result
and it is worth being formulated separately.

\begin{lemma}
\label{lem:extension}
Let $K$ be a non-empty computably compact computable metric space.
Then $\C_K:\In\AA_-(K)\mto K$ has a total extension $\C_K':\AA_-(K)\mto K$ with $\C_K\equivW\C_K'$.
\end{lemma}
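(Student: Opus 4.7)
The plan is to define the total extension $\C_K' : \AA_-(K) \mto K$ by setting $\C_K'(A) := A$ for non-empty $A$ and $\C_K'(\emptyset) := K$. The reduction $\C_K \leqW \C_K'$ is immediate: $\C_K$ is a restriction of $\C_K'$, so the identity maps witness even strong Weihrauch reducibility $\C_K \leqSW \C_K'$.

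For the non-trivial direction $\C_K' \leqW \C_K$, I would exploit that, in a computably compact computable metric space $K$, emptiness of a co-c.e.\ closed set is semidecidable. A $\psi_-^K$--name $p$ of $A$ can be identified with an enumeration of rational open balls $(U_n)_{n \in \IN}$ with $K \setminus A = \bigcup_n U_n$, and computable compactness of $K$ means we can uniformly semidecide whether a finite union $U_0 \cup \cdots \cup U_n$ covers $K$. Define the inner reduction $H$ to read $p$ and copy balls one at a time while, in parallel, running this emptiness test; as soon as the first $n$ is found for which $U_0 \cup \cdots \cup U_n$ covers $K$, $H$ stops emitting new balls and pads the output with trivial entries (e.g.\ repetitions of $U_0$, or of the canonical empty ball). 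Let $q := H(p)$ name the closed set $B \In K$. If $A \neq \emptyset$, the test never triggers, so $q$ enumerates the entire sequence $(U_n)$ and $B = A$; if $A = \emptyset$, the test triggers at some least stage $n$, hence $B = K \setminus (U_0 \cup \cdots \cup U_{n-1})$ is non-empty by minimality (with $B = K$ when $n=0$). Either way $q \in \dom(\C_K)$ and every $x \in \C_K(q)$ lies in $B \In \C_K'(A)$, so the outer reduction just extracts $x$ from $\langle p, x \rangle$.

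The only real obstacle is verifying that the ``emit next ball or freeze'' procedure is a valid computable map on $\psi_-^K$--names, which reduces to two uniformity facts. First, computable compactness gives that the predicate ``this finite set of rational balls covers $K$'' is c.e.\ uniformly in the balls; this is exactly the content of $K \in \KK(K)$ being computable in the $\kappa$--sense. Second, the truncated output is a legitimate $\psi_-^K$--name of $B$ because it enumerates a subset of the originally enumerated open cover of $K \setminus A$, and the chosen padding adds nothing to the union. Both points are routine once the definitions of $\kappa$, $\psi_-^K$, and computable compactness are unwound, and no additional structure on $K$ (such as perfectness or a computable point) is needed beyond what is already assumed.
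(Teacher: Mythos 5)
The overall structure of your reduction $\C_K' \leqW \C_K$ is right: preprocess the input name so that it always denotes a non-empty set, feed it to a realizer of $\C_K$, and project. However, the construction of the inner map $H$ has a genuine gap, and it is precisely the point where the paper chooses a different route.

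You want $H$ to emit the balls $U_0, U_1, \ldots$ of the input name while watching the covering test ``$U_0 \cup \cdots \cup U_n \supseteq K$?'', and to freeze as soon as this test fires, with the intention that the frozen output names $K \setminus (U_0 \cup \cdots \cup U_{n-1})$, which is non-empty by minimality of $n$. But the covering test is only c.e., not decidable (non-covering of a computably compact $K$ by open balls is a $\Pi^0_1$ condition, not semi-decidable). So the semi-decision for the minimal $n$ may confirm only at some stage $s \gg n$, by which time $H$ has already emitted $U_0, \ldots, U_m$ for some $m \geq n$. The frozen output then names $K \setminus (U_0 \cup \cdots \cup U_m)$, which may well be empty -- and then $q \notin \dom(\C_K)$, so $F(q)$ need not be defined and the reduction breaks. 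Your appeal to minimality implicitly assumes $H$ learns of the minimal $n$ instantaneously, which is exactly what c.e.-ness does not give you. (Put differently: to stop emitting \emph{in time}, $H$ would have to refrain from emitting $U_j$ until it has \emph{verified non-covering} of $U_0 \cup \cdots \cup U_j$, which is not a c.e. verification.) This can be repaired, but not by a routine unwinding of definitions: one has to either work with closed shrunken balls $\overline{B}(c_j, r_j - 2^{-k})$ so that non-covering becomes c.e. via the dense sequence, or first push everything to Cantor space where covering of $\{0,1\}^\IN$ by finitely many cylinders is genuinely decidable.

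The paper instead sidesteps the whole issue by not trying to keep the argument to $\C_K$ well-formed. Using Proposition~\ref{prop:compact-cantor} it fixes a \emph{total} computably admissible representation $\delta \colon \{0,1\}^\IN \to K$, and then \emph{post-processes the output}: a realizer of $\C_K'$ copies the realizer of $\C_K$ symbol by symbol (writing only $0$s and $1$s), runs the c.e. emptiness test in parallel, and, once emptiness is detected, simply abandons the simulation and pads the output with $0$s. Since $\delta$ is total on $\{0,1\}^\IN$, \emph{every} infinite binary sequence is a legitimate name of some point of $K$, so the padded output is automatically valid when the input names $\emptyset$, and equals the simulated realizer's output otherwise. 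The total representation (and the associated fact from Corollary~\ref{cor:compact-inversion} that $\iota^{-1}$ is computable, used inside Proposition~\ref{prop:compact-cantor}) is the essential extra ingredient that your proof attempt is missing; without it, one is stuck trying to make the input well-formed, which is exactly where the timing problem bites.
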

\begin{proof}
The set $\{A\in\AA_-(K):A=\emptyset\}$ is c.e.\ open for co-c.e.\ compact $K$.
Since $K$ is computably compact, we can assume by Proposition~\ref{prop:compact-cantor}
without loss of generality that
$K$ is represented by a total representation $\delta:\{0,1\}^\IN\to K$.
Hence $\C_K$ can be extended to a suitable $\C_K'$ as follows: a realizer $F$ of $\C_K$
is modified to a map $G$ such that never anything else but zeros and ones are written on the output tape
and as soon as the empty set is detected as input, the output is just continued
with constant zeros. In any other respect, the map $G$ behaves exactly as $F$.
Due to totality of $\delta$, this output of $G$ is in the domain of $\delta$.
The modification guarantees that the empty set as input leads to some infinite output
and non-empty sets are treated by $G$ exactly as by $F$.
The construction shows that $\C_K'$ is reducible to $\C_K$.
The reverse direction follows since $\C_K'$ is an extension of $\C_K$.
\end{proof}

We note that not every multi-valued operation has a total equivalent extension
(as robust division shows, see \cite{Pau09b}).

Classically, a space $X$ is called {\em $\sigma$--compact} or $K_\sigma$--space,
if it can be written as a countable union of compact sets. For many spaces
this property is somewhat weaker than local compactness,
this holds in particular for represented Hausdorff spaces.
The induced topology of every represented space is known to be 	hereditarily Lindel\"of (see Lemma~2.5 in \cite{Bos08f})
and this means that if it is, additionally, a Hausdorff space,
then local compactness implies $\sigma$--compactness.
This is the reason why we speak about ``locally compact choice'' for short.
We say that $X$ is a {\em computable $K_\sigma$--space}, if $X$ is a computable metric
space, such that there exists a computable sequence $(K_i)_{i\in\IN}$ of non-empty
computably compact sets with $X=\bigcup_{i=0}^\infty K_i$.

\begin{proposition}[Locally compact choice]
\label{prop:locally-compact-choice}
Let $X$ be a computable $K_\sigma$--space.
Then $\C_X\leqW\C_\IN\times\C_{\Cantor}$.
\end{proposition}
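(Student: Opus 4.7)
The strategy is to use the single $\C_\IN$-call in $\C_\IN\times\C_\Cantor$ to select some index $n$ for which $A$ meets $K_n$, and, \emph{in parallel}, to use the single $\C_\Cantor$-call to produce one point of $A\cap K_i$ for every $i\in\IN$ at once (with an arbitrary default value whenever $A\cap K_i=\emptyset$). The post-processor then simply reads off the $n$-th entry of the Cantor-space sequence returned by $\C_\Cantor$, which will already sit inside $A\cap K_n\In A$.

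From the $\psi_-^X$-name $p$ of a non-empty $A\in\AA_-(X)$, I would first compute a $\psi_-^\IN$-name of $N:=\{i\in\IN:A\cap K_i\not=\emptyset\}$: uniformly in $i$, the condition $A\cap K_i=\emptyset$ is equivalent to $K_i$ being covered by finitely many of the rational open balls enumerated by $p$ as the exterior of $A$, which is c.e.\ because each $K_i$ is computably compact; and $N$ is non-empty since $X=\bigcup_i K_i$ and $A\not=\emptyset$. Next, using a uniform version of Proposition~\ref{prop:compact-cantor} applied to the computable sequence $(K_i)$, I obtain a uniformly computable sequence of surjections $\varphi_i:\Cantor\to K_i$ and set $\widetilde{A_i}:=\varphi_i^{-1}(A\cap K_i)\in\AA_-(\Cantor)$. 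To absorb the possible emptiness of some $\widetilde{A_i}$, I would enumerate the complement of $\widetilde{A_i}$ in the standard way but pause one step before the enumerated rational balls would constitute a full cover of $\Cantor$ -- an event which, by compactness of $\Cantor$, is c.e.\ and which occurs precisely when $\widetilde{A_i}=\emptyset$. The resulting closed set $\widetilde{B_i}\In\Cantor$ is non-empty by construction and coincides with $\widetilde{A_i}$ whenever the latter is non-empty, so $B:=\prod_{i\in\IN}\widetilde{B_i}\In\Cantor^\IN\cong\Cantor$ is a computable non-empty closed subset of $\Cantor$.

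A call to $\C_\IN\times\C_\Cantor$ on $(N,B)$ now returns some $n\in N$ together with some sequence $(q_i)_{i\in\IN}\in B$. Because $n\in N$ forces $\widetilde{A_n}\not=\emptyset$ and hence $\widetilde{B_n}=\widetilde{A_n}$, the $n$-th coordinate satisfies $q_n\in\widetilde{A_n}$, so the output $\varphi_n(q_n)$ lies in $A\cap K_n\In A$, as required. The one genuinely subtle point I expect is to verify that Proposition~\ref{prop:compact-cantor} can be applied uniformly to a computable sequence of computably compact subsets of a computable metric space (rather than only to a single computably compact metric space), so that the family $(\varphi_i)$ is in fact uniformly computable; granted this, assembling the pre-processor $H$ and the post-processor $K$ from the pieces above is routine, and in fact $K$ uses only the outputs of $\C_\IN\times\C_\Cantor$.
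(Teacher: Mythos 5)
Your proposal is correct and takes essentially the same route as the paper: select a good index $n$ with $\C_\IN$ by enumerating $\{i:A\cap K_i=\emptyset\}$, and in parallel use one $\C_\Cantor$-call to pick points from all of the (lifted) sets $A\cap K_i$ simultaneously. The only genuine difference is how the possible emptiness of $A\cap K_i$ is absorbed. The paper invokes a uniform version of Lemma~\ref{lem:extension} to replace each $\C_{K_i}$ by a total extension $\C_{K_i}'$ -- the realizer monitors for emptiness and, once detected, simply continues the output with zeros, exploiting that the representation $\delta:\Cantor\to K_i$ is total. You instead act on the input side: you truncate the enumeration of the complement of $\widetilde{A_i}$ just before it would exhaust $\Cantor$, so the named set $\widetilde{B_i}$ is always non-empty and agrees with $\widetilde{A_i}$ whenever the latter is non-empty. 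Both tricks rest on compactness (to make emptiness detectable) and on the totality of a suitable representation; yours avoids a separate extension lemma but requires the small verification that the truncation rule is computable, while the paper's factors the argument through a reusable lemma. The remaining ingredient you correctly flag as requiring care -- a version of Proposition~\ref{prop:compact-cantor} uniform in a computable sequence $(K_i)$ of computably compact sets -- is exactly the same point the paper also leaves as ``a uniform version of Corollary~\ref{cor:polish-compact}.''
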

\begin{proof}
We consider the total extensions $\C_{K_i}'$ of choice that exist
according to a uniform version of Lemma~\ref{lem:extension}.
Using a uniform version of Corollary~\ref{cor:polish-compact}, we obtain
\[F:=\C_{K_0}'\times\C_{K_1}'\times\C_{K_2}'\times...\leqW\widehat{\C_{\Cantor}}\equivW\C_{\Cantor}.\]
Given a closed set $A\In X$ we can compute the sequence
$(A\cap K_n)_{n\in\IN}$ of co-c.e.\ compact sets and hence we can
enumerate the set $\{n\in\IN:A\cap K_n=\emptyset\}$.
This implies that we can find an $n$ such that $A\cap K_n\not=\emptyset$
with the help of $\C_\IN$.
Moreover, $F((A\cap K_n)_{n\in\IN})$ can be obtained with the
help of $\C_{\Cantor}$, as indicated above.
Altogether, this shows $\C_X\leqW\C_\IN\times\C_{\Cantor}$.
\end{proof}

This result can even be generalized to the case that the $K_\sigma$--space is only co-c.e.\ compact
in the sense that the sequence $(K_i)_{i\in\IN}$ is only a computable sequence of co-c.e.\ compact sets.
However, in this case the uniform version of Lemma~\ref{lem:extension} needs some extra attention
since the extensions $\C_K'$ might not always produce a value in $K$ (but only some infinite sequence).
By Proposition~\ref{prop:product} we have $\C_\IN\times\C_{\Cantor}\leqW\C_{\IN\times\Cantor}$.
On the other hand, we can apply the previous proposition to the
$K_\sigma$--space $\IN\times\Cantor$ (with $K_n:=\{n\}\times\Cantor$)
and we get the inverse reduction. We can also apply the previous
proposition to $\IR^k$ (with $K_n:=[-n,n]^k$).

\begin{corollary}
\label{cor:real-choice}
$\C_{\IR^k}\equivW\C_\IR\equivW\C_{\IN\times\Cantor}\equivW\C_\IN\times\C_{\Cantor}$
for all $k\geq1$.
\end{corollary}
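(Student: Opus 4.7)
My plan is to close a Weihrauch-reduction cycle among the four terms, using only the propositions already available in the paper. The middle equivalence $\C_\IN \times \C_\Cantor \equivW \C_{\IN \times \Cantor}$ has essentially been observed in the paragraph preceding the corollary: Proposition~\ref{prop:product} gives the left-to-right reduction, and applying Proposition~\ref{prop:locally-compact-choice} to the computable $K_\sigma$--space $\IN \times \Cantor$ with the compact decomposition $K_n := \{n\} \times \Cantor$ supplies the converse.

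For the outer terms, Proposition~\ref{prop:locally-compact-choice} applied to $\IR^k$ with the computably compact decomposition $K_n := [-n,n]^k$ yields $\C_{\IR^k} \leqW \C_\IN \times \C_\Cantor$. The inclusion $x \mapsto (x, 0, \ldots, 0)$ is a computable embedding $\IR \into \IR^k$ with closed (hence trivially co-c.e.\ closed) range $\IR \times \{0\}^{k-1}$, so Proposition~\ref{prop:embedding} gives $\C_\IR \leqW \C_{\IR^k}$.

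It remains to close the cycle by showing $\C_{\IN \times \Cantor} \leqW \C_\IR$. For this I would apply Proposition~\ref{prop:embedding} to the map
\[\iota : \IN \times \Cantor \to \IR, \qquad \iota(n, p) := n + \sum_{i=0}^{\infty} 2 \cdot 3^{-i-2} p(i),\]
which places a scaled copy $n + C$ of the middle-thirds Cantor set $C \In [0, 1/3]$ over each natural number. Computability and injectivity are immediate, since the intervals $[n, n+1/3]$ are pairwise disjoint and the ternary encoding is injective on $\Cantor$. The inverse is computable on the range: given $x = \iota(n, p)$ one has $x + 1/6 \in [n + 1/6,\, n + 1/2] \In (n, n+1)$, so $n$ is computable from $x$ as the unique integer below $x + 1/6$ (safe because this argument is bounded away from the integers), and $p$ is then decoded from $x - n \in C$ by reading off its ternary expansion. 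The range $\bigcup_{n \in \IN}(n + C)$ is closed in $\IR$, and its complement is an effective union of rational open intervals---namely the inter-bucket gaps $(-\infty, 0) \cup \bigcup_n (n + 1/3,\, n+1)$ together with the enumerable gaps of each scaled Cantor set $n + C$---hence co-c.e.\ closed.

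Chaining the reductions yields
\[\C_\IR \leqW \C_{\IR^k} \leqW \C_\IN \times \C_\Cantor \equivW \C_{\IN \times \Cantor} \leqW \C_\IR,\]
proving that all four terms coincide in the Weihrauch lattice. The only mildly delicate point is the effective description of the complement of $\iota(\IN \times \Cantor)$, which requires simultaneous enumeration of Cantor-set gaps across all buckets; this is routine once each gap is expressed as an open interval with rational endpoints.
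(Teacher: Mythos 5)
Correct, and essentially the paper's own route: the paragraph preceding the corollary already establishes $\C_\IN\times\C_\Cantor\equivW\C_{\IN\times\Cantor}$ and $\C_{\IR^k}\leqW\C_\IN\times\C_\Cantor$ via Propositions~\ref{prop:product} and \ref{prop:locally-compact-choice}, and the two legs you supply explicitly ($\C_\IR\leqW\C_{\IR^k}$ and $\C_{\IN\times\Cantor}\leqW\C_\IR$ via Proposition~\ref{prop:embedding}) are exactly the implicit embedding steps the paper leaves to the reader. Your scaled Cantor-set embedding and the verification that its range is co-c.e.\ closed are correct; a marginally simpler alternative would be to use the already-proved $\C_\Cantor\equivW\C_{[0,1]}$ together with the embedding $(n,x)\mapsto 2n+x$ of $\IN\times[0,1]$ into $\IR$.
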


We mention that by the Theorem of Hurewicz (see Theorem~7.10 in \cite{Kec95}) any Polish
space which is not $K_\sigma$ admits an embedding $\iota:\Baire\to X$ such that
$\range(\iota)$ is closed. Using relativized topological versions of Propositions~\ref{prop:embedding}
and \ref{prop:locally-compact-choice} and Corollary~\ref{cor:polish-compact}
we obtain the following dichotomy.

\begin{corollary}[Dichotomy]
\label{cor:dichotomy}
If $X$ is a Polish space, then there is an oracle such that either $\C_X\leqW\C_\IR$ or $\C_\Baire\equivW\C_X$,
relatively to that oracle (i.e.\ with continuous reductions).
\end{corollary}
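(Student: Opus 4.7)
The plan is to relativize the preceding computable results to a suitable oracle so that the classical Polish space $X$ can be treated as a computable Polish space, and then split into two cases according to the Hurewicz dichotomy mentioned right before the corollary.

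First, I would fix an oracle relative to which $X$ is a computable Polish space (such an oracle exists since every Polish space admits a compatible complete metric and a countable dense sequence, which can be coded). Throughout the rest of the argument, all reductions are understood relative to this oracle, so ``computable'' can be read as ``continuous''. This is the setup that makes the cited relativized versions of Proposition~\ref{prop:embedding}, Proposition~\ref{prop:locally-compact-choice} and Corollary~\ref{cor:polish-compact} applicable.

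Next I would split on whether $X$ is $K_\sigma$. If $X$ is $K_\sigma$, then after possibly enlarging the oracle I may assume $X=\bigcup_{i\in\IN}K_i$ for a computable sequence of (co-c.e.) compact sets $K_i$, so $X$ is a computable $K_\sigma$--space in the relativized sense. The relativized Proposition~\ref{prop:locally-compact-choice} gives $\C_X\leqW\C_\IN\times\C_\Cantor$, and Corollary~\ref{cor:real-choice} identifies the right-hand side with $\C_\IR$, yielding $\C_X\leqW\C_\IR$ as required. If instead $X$ is not $K_\sigma$, the Hurewicz theorem (Theorem~7.10 in \cite{Kec95}) produces a topological embedding $\iota:\Baire\to X$ with closed range. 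After absorbing $\iota$ and the witness to its closedness into the oracle, $\iota$ becomes a computable embedding with co-c.e.\ closed range, so the relativized Proposition~\ref{prop:embedding} gives $\C_\Baire\leqW\C_X$. The reverse reduction $\C_X\leqW\C_\Baire$ is exactly the first statement of the relativized Corollary~\ref{cor:polish-compact} applied to the Polish space $X$. Combining the two reductions yields $\C_X\equivW\C_\Baire$.

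The only real obstacle is bookkeeping with oracles: one must verify that a single oracle suffices to witness all the ingredients simultaneously (the metric structure of $X$, the $K_\sigma$-decomposition in the first case, and the Hurewicz embedding together with the closedness of its range in the second case). Since there are only countably many ingredients in either case, they can all be joined into one oracle, after which the cited relativized propositions apply verbatim and the dichotomy follows.
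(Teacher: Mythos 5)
Your proof is correct and follows exactly the route the paper indicates: split on whether $X$ is $K_\sigma$, use the relativized locally-compact-choice result together with Corollary~\ref{cor:real-choice} in the $K_\sigma$ case, and use Hurewicz plus the relativized embedding result and Corollary~\ref{cor:polish-compact} in the other case. The oracle bookkeeping you add at the end is exactly the implicit content the paper is summarizing when it says ``relativized topological versions.''
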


In other words, topologically the interval between $\C_\IR$ and $\C_\Baire$ is not inhabited
by choice principles of Polish spaces.
It is not too hard to see that for many computable metric spaces $X$ that are not $K_\sigma$,
such as $\IR^\IN$, $\CC[0,1]$ and $\ll{p}$,
there is a computable embedding $\iota:\Baire\to X$ with a co-c.e.\ closed image. Hence we get the following
corollary of Proposition~\ref{prop:embedding}.

\begin{corollary}
$\C_\Baire\equivW\C_{\IR^\IN}\equivW\C_{\ll{p}}\equivW\C_{\CC[0,1]}$ for all computable real $p\geq1$.
\end{corollary}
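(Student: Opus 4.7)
The plan is to establish both directions of each equivalence separately using the general machinery already developed. For each $X \in \{\IR^\IN, \ll{p}, \CC[0,1]\}$, the reduction $\C_X \leqW \C_\Baire$ is immediate from Corollary~\ref{cor:polish-compact} since each of these spaces is a computable Polish space under its standard Cauchy representation. For the converse $\C_\Baire \leqW \C_X$, by Proposition~\ref{prop:embedding} it suffices to exhibit for each $X$ a computable embedding $\iota:\Baire \to X$ whose range is co-c.e.\ closed in $X$.

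For $X=\IR^\IN$ the construction is immediate: take $\iota(p) := (p(0), p(1), p(2), \ldots) \in \IR^\IN$. The range is $\IN^\IN$, which is co-c.e.\ closed because $\IN \In \IR$ has c.e.\ open complement $\bigcup_n (n, n+1)$, and this lifts componentwise to $\IR^\IN$. For $X = \ll{p}$, fix a computable pairing $\langle\cdot, \cdot\rangle: \IN\times\IN \to \IN$ and define $\iota(p)_{\langle k, i\rangle} := 2^{-k-1}$ if $i = p(k)$ and $0$ otherwise. Then $\|\iota(p)\|_p^p = \sum_{k=0}^\infty 2^{-(k+1)p} < \infty$, and $\iota$ is a computable embedding: the inverse recovers $p(k)$ as the unique index $i$ in the $k$-th block whose coordinate equals $2^{-k-1}$, and this can be located effectively because $\ll{p}$-summability forces tail coordinates below any positive threshold. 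The image is the set of ``block-spike'' sequences in which the $k$-th block $(\iota(p)_{\langle k, i\rangle})_{i\in\IN}$ lies in the discrete closed set $V_k := \{2^{-k-1} e_i : i \in \IN\}$. For $X = \CC[0,1]$ one uses an analogous construction: pick pairwise disjoint compact intervals $I_{k,i} \In [0,1]$ and set $\iota(p) := \sum_k 2^{-k-1} \phi_{k, p(k)}$ where each $\phi_{k, i}$ is a triangular bump of height $1$ supported in $I_{k, i}$.

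The main obstacle is verifying that the ranges are co-c.e.\ closed in the infinite-dimensional cases $\ll{p}$ and $\CC[0,1]$. For $\ll{p}$ I would argue that each $V_k$ is co-c.e.\ closed in the block space, since its complement is captured by the union of the c.e.\ open conditions ``$\|v\|_p \neq 2^{-k-1}$'', ``$\|v\|_\infty \neq 2^{-k-1}$'', and ``some coordinate is negative''; each of these is effectively enumerable, using that $\ll{p}$-summability makes $\|v\|_\infty$ computable by squeezing via finitely many coordinates once the tail is controlled. Since the block-$k$ projection $\ll{p}\to\ll{p}(\IN)$ is computable, the preimage of each $V_k$ is co-c.e.\ closed uniformly in $k$, and hence so is the countable intersection, which equals $\iota(\Baire)$. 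The case of $\CC[0,1]$ is handled analogously: the bumps $\phi_{k,i}$ are uniformly separated in the supremum norm within each block, so deviations of a function from an allowed bump configuration can be detected effectively from any $\CC[0,1]$-approximation.
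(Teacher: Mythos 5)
Your high-level plan is exactly the one the paper uses: one direction follows from Corollary~\ref{cor:polish-compact} because each space is computable Polish, and the other from Proposition~\ref{prop:embedding} once a computable embedding of $\Baire$ with co-c.e.\ closed range is produced; the paper asserts such embeddings exist without giving them, so your job is to supply them. The $\IR^\IN$ construction is correct (the complement of $\IN$ in $\IR$ should of course include $(-\infty,0)$, but that is cosmetic). The $\ll{p}$ construction is also correct, and the key point you make is the right one: on $\ll{p}$ the sup norm $\|\cdot\|_\infty$ of a block is genuinely computable from a Cauchy name (because $\|v_m-v\|_\infty\leq\|v_m-v\|_p$), so the complement condition ``$\|\pi_k(v)\|_\infty\neq 2^{-k-1}$ or $\|\pi_k(v)\|_p\neq 2^{-k-1}$ or some coordinate is negative'' is c.e.\ open uniformly in $k$. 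In particular, this test detects the all-zero block, which is the case one must worry about.

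The $\CC[0,1]$ argument, however, has a genuine gap, precisely at the analogous point. With bumps of pairwise disjoint support you must show that the complement of the range is c.e.\ open, and the problematic case is a function $g$ whose restriction to \emph{every} $I_{k,i}$ of some block $k$ is zero (so $g$ has no bump at block $k$ but may be a perfectly valid configuration everywhere else). That failure is a countable \emph{conjunction} ``$g|_{I_{k,i}}=0$ for all $i$'', which is not c.e.; your sentence ``deviations of a function from an allowed bump configuration can be detected effectively from any $\CC[0,1]$-approximation'' does not address it. In $\ll{p}$ this case is rescued by the computability of the block norm; the $\CC[0,1]$ analogue would be the supremum of $|g|$ over $\bigcup_i I_{k,i}$, which is a sup over a \emph{non-compact} set, and its computability is exactly what needs an argument and is missing. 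One can repair the construction --- for example, by arranging the $I_{k,i}$ inside a single compact interval $J_k$, accumulating at a computable endpoint, so that $\sup_{J_k}|g|$ is computable and one can lower semi-compute the distance of a rational piecewise-linear $h$ to the countable discrete set $\{2^{-k-1}\phi_{k,i}:i\in\IN\}$ by exploiting the fact that $\|h-2^{-k-1}\phi_{k,i}\|_\infty$ converges as $i\to\infty$ to a quantity computable from $h$ --- but none of this is in your sketch, and without it the claim that $\range(\iota)$ is co-c.e.\ closed in $\CC[0,1]$ is unsupported.
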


The results mentioned so far in this section are mostly applicable to
Polish spaces. We mention two further examples for non-Polish spaces.
Any sequence $(x_n)_{n\in\IN}$ in $X$ can be seen as a surjection from $\IN$
onto the range of the sequence. Hence we obtain the following corollary.

\begin{corollary}
Let $X$ be a represented space and let $(x_n)_{n\in\IN}$ be a computable
sequence in $X$ with $R:=\{x_n:n\in\IN\}$. Then $\C_R\leqW\C_\IN$.
\end{corollary}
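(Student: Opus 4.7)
The plan is to apply Proposition~\ref{prop:surjection} directly, with $A := \IN$ and $B := R$, using the enumeration $s:\IN\to R,\; n\mapsto x_n$ as the required computable surjection.

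First I would verify the three hypotheses of Proposition~\ref{prop:surjection}. Surjectivity of $s$ is immediate from the definition $R=\{x_n:n\in\IN\}$. Computability of $s$ follows from the assumption that $(x_n)_{n\in\IN}$ is a computable sequence in $X$: equipping $R$ with the subspace representation inherited from $X$ (so that the inclusion $R\hookrightarrow X$ is computable), a name of $n\in\IN$ can be transformed uniformly into a name of $x_n\in X$, which is automatically a name of $x_n\in R$. Finally, $\dom(s)=\IN$ is the whole of the source space, hence trivially co-c.e.\ closed in $\IN$ (being both open and closed).

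With these three conditions verified, Proposition~\ref{prop:surjection} delivers $\C_R\leqW\C_\IN$ at once, by way of the pullback map $S:\AA_-(R)\to\AA_-(\IN),\; M\mapsto s^{-1}(M)$ and the factorisation $\C_R=s\circ\C_\IN\circ S$.

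The only point that requires a moment of care is the choice of representation on $R$; once it is fixed as the subspace representation from $X$, both the computability of $s$ and the meaning of $\AA_-(R)$ are unambiguous, and there is no genuine obstacle in the proof. This is essentially a one-line deduction from the previously established surjection principle.
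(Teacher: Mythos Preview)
Your proposal is correct and matches the paper's approach exactly: the paper simply notes that a computable sequence is a computable surjection from $\IN$ onto its range and invokes Proposition~\ref{prop:surjection}. Your added remark about equipping $R$ with the subspace representation is a helpful clarification that the paper leaves implicit.
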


This can, in particular, be applied to the rational numbers as a subspace
of Euclidean space.

\begin{corollary}
$\C_\IQ\equivW\C_\IN$, independently of whether $\IQ$ is equipped
with the discrete representation and topology or with the Euclidean one.
\end{corollary}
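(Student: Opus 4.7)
The plan is to handle both representations of $\IQ$ (discrete and Euclidean) in parallel, establishing the two reductions $\C_\IQ\leqW\C_\IN$ and $\C_\IN\leqW\C_\IQ$. The preceding corollary on choice over the range of a computable sequence will provide one direction, and Proposition~\ref{prop:embedding} will provide the other.

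For $\C_\IQ\leqW\C_\IN$, I would apply the preceding corollary with the standard numbering $\overline{\,\cdot\,}:\IN\to\IQ$ playing the role of the computable sequence $(x_n)_{n\in\IN}$. This numbering is computable with respect to either representation of $\IQ$ (tautologically in the discrete case, and in the Euclidean case because the constant sequence $\widehat{n}$ is a rapidly converging Cauchy name of $\overline{n}$), and its range is all of $\IQ$, so the hypotheses are met and $\C_\IQ\leqW\C_\IN$ follows in both cases.

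For the converse $\C_\IN\leqW\C_\IQ$, I would apply Proposition~\ref{prop:embedding} to the inclusion $\iota:\IN\to\IQ$. In the discrete case $\iota$ is plainly a computable embedding with co-c.e.\ closed range. In the Euclidean case $\iota$ is still a computable embedding: a Cauchy name of an integer is produced from its $\delta_\IN$-name by outputting a constant sequence, and conversely a rational approximation within distance less than $1/2$ pinpoints the integer. The range $\IN\In\IQ$ is co-c.e.\ closed because the complement $\IR\setminus\IN=(-\infty,0)\cup\bigcup_{n\in\IN}(n,n+1)$ is expressible as a computable list of rational open intervals (for instance, $(n,n+1)$ for $n\in\IN$ together with $(-k,-1/m)$ for $k,m\geq 1$), and under the subspace structure this list restricts to a $\psi_-^\IQ$-name of $\IN\In\IQ$.

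The only step deserving any caution is the final one: verifying that the enumeration of the complement of $\IN$ in $\IR$ genuinely yields a $\psi_-$-name of $\IN$ as a closed subset of the Euclidean $\IQ$. Once the subspace representation is unwound this is immediate from the definitions, so I do not anticipate any serious technical obstacle; the rest of the argument is bookkeeping.
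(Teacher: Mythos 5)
Your proof is correct and follows the approach the paper clearly intends: the preceding corollary (about ranges of computable sequences) gives $\C_\IQ\leqW\C_\IN$, and Proposition~\ref{prop:embedding} applied to the inclusion $\IN\hookrightarrow\IQ$ gives the converse, with the only verifications needed being the computability of the embedding and the co-c.e.\ closedness of $\IN$ in Euclidean $\IQ$, both of which you handle correctly. The paper supplies no explicit proof of this corollary, but the tools you invoke are precisely the ones it sets up in this section, so there is no meaningful divergence.
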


The irrational numbers are computably homeomorphic to Baire space
(with respect to the Euclidean topology and via their continued fraction representation)
and hence we get the following conclusion.

\begin{corollary} $\C_{\IR\setminus\IQ}\equivW\C_{\Baire}$.
\end{corollary}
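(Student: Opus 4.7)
The plan is to deduce both directions of the equivalence from Proposition~\ref{prop:surjection} applied to a computable homeomorphism between $\Baire$ and $\IR\setminus\IQ$.

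First, I would invoke the classical fact, alluded to in the sentence preceding the statement, that the continued fraction expansion supplies a bijection $h:\Baire\to\IR\setminus\IQ$ (where $\IR\setminus\IQ$ is equipped with the Euclidean subspace representation, i.e.\ the restriction of the Cauchy representation $\rho$) together with a computable inverse $h^{-1}$. Concretely, one lets $h(n_0,a_1,a_2,\ldots)$ be the irrational whose signed integer part is read off $n_0$ and whose sequence of positive partial quotients is $(a_1+1,a_2+1,\ldots)$; computability of $h$ follows because truncating the continued fraction at depth $k$ yields a rational approximation with explicit error bound smaller than $2^{-k}$, and computability of $h^{-1}$ follows because from a sufficiently accurate rational approximation to an irrational one can algorithmically determine any prescribed finite prefix of its continued fraction.

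With $h$ in hand, the two reductions follow immediately. Applying Proposition~\ref{prop:surjection} with $s:=h:\Baire\to\IR\setminus\IQ$, which is a total computable surjection whose domain $\Baire$ is trivially co-c.e.\ closed in itself, yields $\C_{\IR\setminus\IQ}\leqW\C_{\Baire}$. Applying the same proposition in the other direction with $s:=h^{-1}:\IR\setminus\IQ\to\Baire$, again a total computable surjection with trivially co-c.e.\ closed domain, yields $\C_{\Baire}\leqW\C_{\IR\setminus\IQ}$.

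There is really no substantial obstacle here beyond the bookkeeping verification that the continued fraction bijection is computable in both directions between $\Baire$ with the identity representation and $\IR\setminus\IQ$ with the Euclidean subspace representation; this is a standard fact of computable analysis and no further work is needed within the framework of the paper.
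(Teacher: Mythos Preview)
Your proposal is correct and matches the paper's own justification, which simply notes that the irrationals are computably homeomorphic to Baire space via continued fractions and leaves the corollary as an immediate consequence. Your explicit invocation of Proposition~\ref{prop:surjection} in both directions is exactly the mechanism the paper has in mind.
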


\section{Compact Choice, Quotients and Join-Irreducibility}

The following theorem shows that any single-valued function $f$ that can
be computed from compact choice and another function $g$ can already
be computed from $g$ alone. Thus, we can ``divide'' by compact choice
in such a situation. This result generalizes Corollary~8.8 in \cite{BG09a}.

\begin{theorem}[Quotients]
\label{thm:quotient}
Let $X$ be a represented space and $Y$ be a computable metric space
and let $g$ be a multi-valued function on represented spaces.
If $f:\In X\to Y$ is single-valued and $f\leqW\C_{\Cantor}\times g$, then
$f\leqW g$.
\end{theorem}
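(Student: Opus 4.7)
The plan is to eliminate the invocation of $\C_{\Cantor}$ by exploiting single-valuedness of $f$ together with compactness of $\Cantor$. Unpacking $f\leqW\C_{\Cantor}\times g$ yields computable $H,K$ such that, writing $H(p)=\langle H_1(p),H_2(p)\rangle$ where $H_1(p)$ is a $\psi_-^{\Cantor}$--name of some non-empty closed $A_p\In\Cantor$, the value $K\langle p,\langle F(H_1(p)),G(H_2(p))\rangle\rangle$ is a $\delta_Y$--name of $y:=f(\delta_X(p))$ for every $F\vdash\C_\Cantor$ and every $G\vdash g$. Since one may choose $F$ so that $F(H_1(p))$ is any prescribed $\delta_\Cantor$--name of any prescribed $z\in A_p$, and $f$ is single-valued, the continuous map $\varphi_{p,r}:\Cantor\to\Baire$ given by $z\mapsto K\langle p,\langle z,r\rangle\rangle$ sends every $z\in A_p$ to a $\delta_Y$--name of the one point $y$, where $r:=G(H_2(p))$.

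To build $f\leqW g$, set $H'(p):=H_2(p)$ and design $K'$ so that from $\langle p,r\rangle$ with $r=G(H'(p))$ it outputs a Cauchy name of $y$. For each $(n,k)\in\IN^2$, let $W_{n,k}\In\Baire$ be a canonical c.e.\ open set satisfying $W_{n,k}\cap\dom(\delta_Y)=\delta_Y^{-1}(B(\alpha(n),2^{-k}))$; these exist uniformly in $n,k$ because $Y$ carries its Cauchy representation. Put $V_{n,k}:=\varphi_{p,r}^{-1}(W_{n,k})\In\Cantor$, a c.e.\ open set uniformly in $n,k,p,r$. By the previous paragraph, for $z\in A_p$ we have $z\in V_{n,k}$ iff $y\in B(\alpha(n),2^{-k})$.

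The crux is that $A_p\In V_{n,k}$ is semi-decidable from the data: $\Cantor\sm A_p$ is c.e.\ open (enumerated from $H_1(p)$), so $A_p\In V_{n,k}$ is equivalent to $(\Cantor\sm A_p)\cup V_{n,k}=\Cantor$, and by compactness of $\Cantor$ this reduces to finding a finite subcover among the basic cylinders listing the two c.e.\ opens --- a uniformly c.e.\ condition. Once it is certified, and using that $A_p\not=\emptyset$, one concludes $y\in B(\alpha(n),2^{-k})$.

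Accordingly $K'$ enumerates all $(n,k)$ for which $A_p\In V_{n,k}$ is eventually witnessed, producing a stream of rational balls containing $y$; density of $\alpha$ ensures that for every $k$ some $n$ with $d(\alpha(n),y)<2^{-k}$ is enumerated, since $A_p\In V_{n,k}$ then holds by the equivalence above. The standard procedure (at stage $k$, wait for some $n$ enumerated with parameter $k+2$ and output $\alpha(n)$) extracts a rapidly converging Cauchy name of $y$, so $K'\langle\id,GH'\rangle\vdash f$. The main technical hurdle is the uniform construction of the c.e.\ open extensions $W_{n,k}$ from $\delta_Y^{-1}(B(\alpha(n),2^{-k}))\In\dom(\delta_Y)$ to subsets of $\Baire$; this is where the hypothesis that $Y$ is a computable metric space (rather than an arbitrary represented space) is used in an essential way.
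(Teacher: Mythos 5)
Your proof is correct and follows essentially the same strategy as the paper's: both use that a realizer maps the whole compact set $A_p\subseteq\Cantor$ into names of the single point $f(x)$, obtain (negative) compact information about the singleton $\{f(x)\}$, and then recover the point from that information using the metric structure of $Y$. The paper packages this via the computable forward image of co-c.e.\ compact sets (Theorem~3.3 of \cite{Wei03}) and the computable inverse of $\inj:Y\to\KK_-(Y)$ (Lemma~6.4 of \cite{Bra08}), whereas you unfold those black boxes into an explicit algorithm that semi-decides $A_p\In V_{n,k}$ by compactness of $\Cantor$ and streams out a rapid Cauchy name --- same idea, different level of abstraction.
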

\begin{proof}
We use the Cauchy representation $\delta_Y$ for $Y$ and canonical projections
$\pi_i:\Baire\to\Baire$ with $\pi_1\langle p,q\rangle=p$ and $\pi_2\langle p,q\rangle=q$.
Now let $f:\In X\to Y$ be such that $f\leqW\C_{\Cantor}\times g$.
Hence there are computable functions $H$ and $K$ such that
$K\langle\id,PH\rangle$ is a realizer of $f$ for any
realizer $P$ of $\C_{\Cantor}\times g$.
Since $H$ and $K$ are computable, as well as the Cartesian product on compact sets,
it follows from Theorem~3.3 in \cite{Wei03}
that there is a computable function $S:\In\Baire\to\Baire$ with
\[\kappa_-S\langle p,q\rangle=\delta_YK\langle\{p\}\times\langle\kappa_-\pi_1H(p)\times\{q\}\rangle\rangle\]
for all $p\in\dom(f\delta_X)$ and suitable $q$.
We now consider the function $T:\In\Baire\to\Baire$ with
$T(p)=S\langle p,G\pi_2H(p)\rangle$. Whenever $G$ is a realizer of $g$, then $T$ is a realizer
of the function $F:\In X\to\KK_-(Y),x\mapsto\{f(x)\}$.
Hence, $F\leqW g$. If the space $Y$ is a computable metric space, then
$\inj:Y\to\KK_-(Y),x\mapsto\{x\}$ has a computable inverse (see Lemma 6.4 in \cite{Bra08})
and it follows that $f=\inj^{-1}\circ F$. That implies $f\leqW F$.
\end{proof}

We note that this theorem can be generalized to larger classes of spaces $Y$.
The only property that is exploited is that the injection $\inj:Y\to\KK_-(Y)$ has
a computable inverse. We obtain some straightforward corollaries.

\begin{corollary}
\label{cor:quotient-cantor}
Let $X$ be a represented space and $Y$ be a computable metric space.
If $f:\In X\to Y$ is single-valued and $f\leqW\C_{\Cantor}$, then
$f$ is computable.
\end{corollary}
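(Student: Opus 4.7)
The plan is to derive this immediately from the Quotient Theorem (Theorem~\ref{thm:quotient}) by specializing the auxiliary function $g$ to something trivially computable. Concretely, I would instantiate $g := \id_\Baire$, regarded as a computable self-map of Baire space with its standard representation.

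First I would check that the hypothesis lifts: from $f \leqW \C_{\Cantor}$ we want $f \leqW \C_{\Cantor} \times \id$. This is immediate because $\C_{\Cantor}$ is pointed (by the Pointedness Lemma) and taking the product with the identity cannot increase computational power in a relevant way; more directly, if $H, K$ witness $f \leqW \C_{\Cantor}$, then the same $H$ together with a slightly modified $K$ that simply discards the identity-component of the oracle output witnesses $f \leqW \C_{\Cantor} \times \id$.

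Next I would invoke Theorem~\ref{thm:quotient} with this choice of $g$: since $f$ is single-valued and $Y$ is a computable metric space, the hypotheses of the Quotient Theorem are met, and we conclude $f \leqW \id$. By definition this means $f$ admits a computable realizer, i.e.\ $f$ is computable.

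There is essentially no obstacle here; the entire content of the corollary is the Quotient Theorem applied to the minimal choice of side-oracle. The only point that requires a moment of care is that $\id$ counts as a valid multi-valued function on represented spaces for the purposes of Theorem~\ref{thm:quotient}, which is clear from the setup in Section~2, and that $f \leqW \C_{\Cantor}$ entails $f \leqW \C_{\Cantor} \times \id$, which is a standard pointedness observation.
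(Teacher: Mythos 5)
Your proposal is correct and takes essentially the same route the paper intends: the paper presents Corollary~\ref{cor:quotient-cantor} explicitly as a ``straightforward corollary'' of Theorem~\ref{thm:quotient}, obtained by taking the side oracle $g$ to be trivial, exactly as you do. The only step worth spelling out is the lift from $f\leqW\C_\Cantor$ to $f\leqW\C_\Cantor\times\id$, which follows since $\id$ is pointed (so $\C_\Cantor\leqW\C_\Cantor\times\id$ by supplying a dummy argument to the $\id$ factor and discarding that output), and your sketch of this is fine.
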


This is just Corollary~8.8 from \cite{BG09a}.
Together with Corollary~\ref{cor:real-choice} we obtain the following
result, which is new.

\begin{corollary}
\label{cor:quotient-reals}
Let $X$ be a represented space and $Y$ be a computable metric space.
If $f:\In X\to Y$ is single-valued and $f\leqW\C_{\IR}$, then
$f\leqW\C_\IN$.
\end{corollary}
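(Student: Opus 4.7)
The plan is to reduce this immediately to the Quotient Theorem. By Corollary~\ref{cor:real-choice} we have $\C_\IR \equivW \C_\IN \times \C_{\Cantor}$, so the assumption $f \leqW \C_\IR$ upgrades to $f \leqW \C_\IN \times \C_{\Cantor}$. Since the product on represented spaces is symmetric up to Weihrauch equivalence (the swap map $(x,y) \mapsto (y,x)$ is computable together with its inverse), this is the same as $f \leqW \C_{\Cantor} \times \C_\IN$. At this point Theorem~\ref{thm:quotient}, applied with $g := \C_\IN$ (which is a multi-valued function between represented spaces and therefore a legal choice of $g$ in that theorem), and using the standing hypotheses that $f$ is single-valued and $Y$ is a computable metric space, yields $f \leqW \C_\IN$ directly.

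There is essentially no obstacle beyond assembling these two ingredients; the conceptual content is that $\C_\IR$ splits as a product in which the ``compact'' factor $\C_{\Cantor}$ is exactly the piece that the Quotient Theorem allows us to cancel against a single-valued function into a computable metric space. The only thing one should double-check when writing out a full proof is that the commutativity $\C_\IN \times \C_{\Cantor} \equivW \C_{\Cantor} \times \C_\IN$ is being used in a form compatible with the precise statement of Theorem~\ref{thm:quotient} (which fixes the order ``$\C_{\Cantor} \times g$''); this is immediate from the remark that $\leqW$ is preserved under composition with computable maps and that the swap realizer is computable.
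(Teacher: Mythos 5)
Your proof is correct and is essentially the paper's own argument: the paper derives this corollary by combining Corollary~\ref{cor:real-choice} ($\C_\IR\equivW\C_\IN\times\C_\Cantor$) with the Quotient Theorem~\ref{thm:quotient}, exactly as you do. Your remark about the (trivial) commutativity of the product is a harmless bit of bookkeeping that the paper leaves implicit.
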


By exploiting the distributivity of the Weihrauch lattice discovered in
\cite{Pau09}, a restricted version of
Theorem~\ref{thm:quotient} could be obtained, using coproducts instead of products.
Combined with the observation that coproducts are the suprema in the Weihrauch lattice,
and the usefulness of the decomposition into products presented in Corollary~\ref{cor:real-choice},
it seems sensible to explore whether any of our principles of closed choice can be expressed as a
supremum of other degrees. The negative answer is a consequence of the next result.
To formulate it, we define the concept of join-irreducibility in the Weihrauch lattice.

\begin{definition}[Join-irreducibility]
A multi-valued function $f$ on represented spaces is called {\em join-irreducible},
if $f \equivW\coprod_{n \in \mathbb{N}} f_n$ implies
the existence of an $n_0 \in \mathbb{N}$ with $f \equivW f_{n_0}$.
\end{definition}

We note that for finitely many $f_n$, this is exactly the ordinary lattice theoretic
concept of join-irreducibility. For countably many $f_n$, this concept might be called
$\sigma$--join-irreducibility (see \cite{Sel07}). However, this is also not quite
appropriate since the coproduct $\coprod_{n\in\IN}f_n$ is not
necessarily the supremum of the $f_n$. This is correct for continuous reducibility,
but not for the computable case. We refrain to introduce another name and
call the above concept just join-irreducibility, which is justified since we will
basically only apply it in a situation with finitely many $f_n$.

If $f:\In X\mto Y$ is a function between represented spaces, with representation
$\delta$ of $X$, then we define
$f_A$ for each set $A\In\Baire$ as follows. We let $(X_A,\delta|_A)$ be
the represented space with $X_A:=\delta(A)$ and the restriction $\delta|_A$
of $\delta$ to $A$. Then $f_A:\In X_A\mto Y$ is the restriction of $f$ to
the represented space $(X_A,\delta_A)$.
That is, we obtain $F|_A \vdash f_A$ if $F \vdash f$.
Using this concept, we get the following sufficient criterion for join-irreducibility.

\begin{lemma}[Join-irreducibility]
\label{lem:join-irreducible}
Let $(X,\delta_X)$ and $Y$ be represented spaces.
Assume that for some multi-valued function $f:\In X\mto Y$
the equivalence $f\equivW f_A$ holds for each
non-empty set $A\In\Baire$ that is clopen in $\dom(f\delta_X)$. Then $f$ is join-irreducible.
\end{lemma}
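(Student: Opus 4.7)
Assume $f \equivW \coprod_{n\in\IN} f_n$; the goal is to find $n_0$ with $f \equivW f_{n_0}$. From the reduction $f \leqW \coprod_{n\in\IN} f_n$ I obtain computable $H, K :\In \Baire \to \Baire$ such that $K\langle\id, GH\rangle \vdash f$ for every realizer $G \vdash \coprod_{n\in\IN} f_n$. A name of an element of $\coprod_{n\in\IN} X_n$ has the form $\langle n, q\rangle$ where $n\in\IN$ indexes the summand and $q$ names an element of $X_n$. Hence for each $p \in \dom(f\delta_X)$ the value $H(p)$ determines an index $\nu(p) := \pi_1 H(p) \in \IN$. Since $H$ is continuous and $\IN$ is discrete, $\nu$ is locally constant, so the sets
\[A_n := \{p \in \dom(f\delta_X) : \nu(p) = n\}\]
form a countable clopen partition of $\dom(f\delta_X)$. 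At least one $A_{n_0}$ is non-empty, and $A_{n_0}$ is clopen in $\dom(f\delta_X)$, so the hypothesis gives $f \equivW f_{A_{n_0}}$.

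Next I will show $f_{A_{n_0}} \leqW f_{n_0}$. Define computable maps $H'(p) := \pi_2 H(p)$ and $K'\langle p, s\rangle := K\langle p, \langle n_0, s\rangle\rangle$. For any realizer $G_{n_0} \vdash f_{n_0}$ and any $p \in A_{n_0}$, the string $\langle n_0, G_{n_0}(H'(p))\rangle$ agrees with $GH(p)$ for any realizer $G$ of $\coprod_{n\in\IN} f_n$ whose $n_0$-branch coincides with $G_{n_0}$. On inputs in $A_{n_0}$ only the $n_0$-branch of $G$ is ever consulted, so I do not need to extend $G_{n_0}$ to a full realizer of the coproduct: the value $K'\langle p, G_{n_0} H'(p)\rangle = K\langle p, GH(p)\rangle$ is automatically a valid name of an element of $f\delta_X(p)$, which witnesses $f_{A_{n_0}} \leqW f_{n_0}$. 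Conversely, $f_{n_0} \leqW \coprod_{n\in\IN} f_n$ holds trivially by prepending the index $n_0$ on the input side and stripping it on the output side, and the latter is Weihrauch-equivalent to $f$.

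Combining these, $f \equivW f_{A_{n_0}} \leqW f_{n_0} \leqW f$, so $f \equivW f_{n_0}$, which is exactly join-irreducibility. The only conceptual point is the observation that continuity forces the coproduct-index chosen by $H$ to be locally constant on $\dom(f\delta_X)$; the main technical concern is the construction of the first reduction, which I resolve by absorbing the fixed index $n_0$ into the pre- and post-processing maps $H'$ and $K'$ rather than attempting to build a realizer of the full coproduct from $G_{n_0}$ alone.
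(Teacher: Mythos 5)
Your proof is correct and follows essentially the same route as the paper's: extract from the reduction $f \leqW \coprod_n f_n$ a continuous index map into $\IN$ (the paper calls it $\mathcal{N}$, you call it $\nu = \pi_1 H$), observe that its fibers are clopen in $\dom(f\delta_X)$, pick a non-empty fiber $A_{n_0}$, invoke the hypothesis to get $f \equivW f_{A_{n_0}} \leqW f_{n_0}$, and close the cycle with the trivial reduction $f_{n_0} \leqW f$. You merely spell out in more detail what the paper compresses into the claim that $f_{\mathcal{N}^{-1}(n)} \leqW f_n$, namely the explicit construction of $H'$ and $K'$ that absorbs the fixed index $n_0$; that extra care about not needing to extend $G_{n_0}$ to a full realizer of the coproduct is a nice clarification but not a different idea.
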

\begin{proof}
Assume $f \leqW \coprod_{n \in \mathbb{N}} f_n$.
Then there exists a computable function $\mathcal{N} :\In\Baire \to \mathbb{N}$ with
$f_{\mathcal{N}^{-1}(n)} \leqW f_n$ and $\dom(\mathcal{N})=\dom(f\delta_X)$.
There has to be an $n_0 \in \mathbb{N}$,
so that $\mathcal{N}^{-1}(n_0) \neq \emptyset$, and due to continuity of $\mathcal{N}$,
this set is closed and open in $\dom(f\delta_X)$. Thus, by the assumption, we have
$f \leqW f_{\mathcal{N}^{-1}(n_0)} \leqW f_{n_0}$. The other direction is trivial.
\end{proof}

If we take away finitely many small open rational balls from $\IN$, $\Cantor$, $\Baire$
or $\IR$, respectively, such that the remainder is non-empty, then the remainder is still
large enough to simulate closed choice of the entire space within this subspace.
This is why closed choice for all these spaces satisfies the above criterion for
join-irreducibility.

\begin{corollary}
$\C_\mathbb{N}$, $\C_\Cantor$, $\C_\Baire$ and $\C_\mathbb{R}$ are join-irreducible.
\end{corollary}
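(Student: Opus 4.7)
By Lemma~\ref{lem:join-irreducible}, it suffices to verify, for each $X \in \{\IN, \Cantor, \Baire, \IR\}$ and every non-empty clopen subset $A \In \dom(\C_X \psi_-^X)$, that $\C_X \equivW (\C_X)_A$. The direction $(\C_X)_A \leqSW \C_X$ is immediate since every realizer of $\C_X$ restricts to one of $(\C_X)_A$; the actual content lies in the converse.

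The plan is to exploit openness. Fix any name $p_0 \in A$; by openness of $A$ there is a finite prefix $w \prefix p_0$ with $[w] \cap \dom(\C_X \psi_-^X) \In A$. Reading $w$ extracts a finite list $U_1, \dots, U_m$ of basic open sets enumerated as exclusions (finitely many natural numbers, cylinders in $\Cantor$ or $\Baire$, or rational open intervals in $\IR$), so every closed set named by an extension of $w$ is forced to lie in $X \setminus \bigcup_i U_i$. Given any non-empty closed $C \In X$, the goal is to build computably a non-empty closed $C' \In X \setminus \bigcup_i U_i$ whose $\psi_-^X$-name begins with $w$, together with a computable decoding that recovers a point of $C$ from any point of $C'$; a single call to $(\C_X)_A$ then completes the reduction.

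For $X \in \{\IN, \Cantor, \Baire\}$ this is achieved via a computable closed embedding $\iota \colon X \to X \setminus \bigcup_i U_i$: the shift $n \mapsto n + K$ with $K$ larger than every excluded natural number, or the concatenation $x \mapsto u \cdot x$ where $[u]$ is a cylinder disjoint from every excluded cylinder. Such $u$ exists in $\Cantor$ since the finitely many excluded cylinders cannot cover the compact space (otherwise $w$ would extend to no name of a non-empty set), and in $\Baire$ by picking any first symbol not occurring in the $U_i$. Setting $C' := \iota(C)$, the desired $\psi_-^X$-name is produced by enumerating the finitely many exclusions describing $X \setminus \iota(X)$ together with the push-forward through $\iota$ of the enumerated exclusions of $C$, and $\iota^{-1}$ does the decoding.

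The main obstacle will be $X = \IR$, because $\IR$ has two topological ends and therefore cannot be closed-embedded into a single half-line, while $\IR \setminus \bigcup_i U_i$ offers only two disjoint unbounded rays. Handle this with a two-ray construction: choose rationals $a \le \min_i a_i$ and $b \ge \max_i b_i$ so that both $(-\infty, a]$ and $[b, \infty)$ lie in $\IR \setminus \bigcup_i U_i$, and set
\[
C' := \bigl((C \cap [0, \infty)) + b\bigr) \;\cup\; \bigl((C \cap (-\infty, 0]) + a\bigr).
\]
Then $C'$ is a non-empty closed subset of $[b, \infty) \cup (-\infty, a]$, and a $\psi_-^\IR$-name of $C'$ extending $w$ is computable from that of $C$ by enumerating the interval $(a, b)$ together with appropriately truncated shifts of the enumerated rational exclusions of $C$ into each ray. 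To decode $y \in C'$, approximate $y$ until it is certified to lie strictly on one side of $(a + b)/2$; this terminates since $y \notin (a, b)$, and then $y - b \in C$ or $y - a \in C$ is returned in the respective case.
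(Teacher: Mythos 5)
Your proof is correct and follows essentially the same approach as the paper, which only sketches the argument: remove finitely many basic open balls dictated by a prefix $w$, observe that the remainder of each space is still large enough to computably embed the full space and push closed sets through the embedding, then decode. You have filled in the details the paper omits, including the two-ray construction needed to handle $\IR$, which has no single-ray closed embedding of itself.
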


Another consequence is that the coproduct (i.e.\ the supremum) of $\C_\IN$ and $\C_\Cantor$
is strictly below the product.

\begin{corollary}
\label{cor:coprod-product}
$\C_\mathbb{N} \coprod \C_\Cantor \lW \C_\mathbb{N} \times \C_\Cantor$.
\end{corollary}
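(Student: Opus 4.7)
The forward reduction $\C_\IN\coprod\C_\Cantor\leqW\C_\IN\times\C_\Cantor$ is immediate, since coproducts reduce to products for pointed functions (as used in Proposition~\ref{prop:product}), so all the work lies in ruling out the reverse reduction. My plan is to transport the assumed equivalence to $\C_\IR$ via Corollary~\ref{cor:real-choice}, then invoke join-irreducibility to split into two simpler separations that can be settled using machinery already in place.

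Concretely, I would assume for contradiction that $\C_\IN\times\C_\Cantor\leqW\C_\IN\coprod\C_\Cantor$. Combining this with the forward reduction and with $\C_\IR\equivW\C_\IN\times\C_\Cantor$ from Corollary~\ref{cor:real-choice}, one obtains $\C_\IR\equivW\C_\IN\coprod\C_\Cantor$. By join-irreducibility of $\C_\IR$ (established in the preceding corollary), either $\C_\IR\equivW\C_\IN$ or $\C_\IR\equivW\C_\Cantor$. Since $\C_\IN\leqW\C_\IR$ and $\C_\Cantor\leqW\C_\IR$ either way, this collapses to the disjunction $\C_\Cantor\leqW\C_\IN$ or $\C_\IN\leqW\C_\Cantor$, and both alternatives must be refuted.

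To rule out $\C_\IN\leqW\C_\Cantor$, I would apply the Quotient Theorem in the form of Corollary~\ref{cor:quotient-cantor} to the single-valued function $\LPO:\IN^\IN\to\{0,1\}$. The first step is to verify $\LPO\leqW\C_\IN$ by constructing, from $p\in\IN^\IN$, a computable $\psi_-^\IN$-name of the non-empty set $A_p\In\IN$ defined by $0\in A_p$ iff $p=\widehat{0}$ and $n+1\in A_p$ iff $n$ is the first position with $p(n)\neq 0$ (both membership conditions are co-c.e.\ in $p$), since any element of $A_p$ reveals $\LPO(p)$. A reduction $\C_\IN\leqW\C_\Cantor$ would then yield $\LPO\leqW\C_\Cantor$, which by Corollary~\ref{cor:quotient-cantor} would force $\LPO$ to be computable---a contradiction. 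To rule out $\C_\Cantor\leqW\C_\IN$, I would fix a non-empty co-c.e.\ closed $A\In\Cantor$ with no computable member and let $p$ be a computable $\psi_-^\Cantor$-name of $A$. Assuming $K\langle\id,GH\rangle\vdash\C_\Cantor$ for every realizer $G\vdash\C_\IN$, the computable $H(p)$ represents some non-empty $B\In\IN$; picking any $n\in B$ and any realizer $G$ of $\C_\IN$ whose value at $H(p)$ is the name of $n$, the expression $K\langle p,GH(p)\rangle$ depends only on computable data and therefore produces a computable element of $A$, contradicting the choice of $A$.

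The hard part is the separation $\C_\Cantor\nleqW\C_\IN$: it cannot be deduced from the Quotient Theorem since neither side is single-valued. The crucial observation unlocking it is that although realizers of $\C_\IN$ need not be computable, on a single fixed input one is free to choose a realizer that outputs any admissible natural number, so the full expression $K\langle p,GH(p)\rangle$ becomes computable for a judiciously chosen $G$---and this is exactly what clashes with the non-existence of a computable element in the fixed $\Pi^0_1$-class.
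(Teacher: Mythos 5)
Your proposal follows the same route the paper intends: apply join-irreducibility of $\C_\IR$ together with $\C_\IR\equivW\C_\IN\times\C_\Cantor$ from Corollary~\ref{cor:real-choice}, which reduces the claim to the two incomparability facts $\C_\IN\nleqW\C_\Cantor$ and $\C_\Cantor\nleqW\C_\IN$. The paper takes those two separations for granted (citing \cite{BG09b} elsewhere rather than re-proving them), whereas you supply explicit and correct arguments — $\LPO\leqW\C_\IN$ combined with Corollary~\ref{cor:quotient-cantor} for the first, and a co-c.e.\ closed set in $\Cantor$ with no computable member for the second — so your write-up is the same proof with the auxiliary facts made self-contained.
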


This corollary also shows that the coproduct of two idempotent functions is not
necessarily idempotent (see Equation~(\ref{eqn:coproduct-product})).

\begin{corollary}
$\C_\mathbb{N} \coprod \C_\Cantor$ is not idempotent.
\end{corollary}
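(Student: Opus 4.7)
The plan is to derive a contradiction from idempotency by combining Corollary~\ref{cor:coprod-product} with the general identity~(\ref{eqn:coproduct-product}) recalled near the start of Section~3. Recall that~(\ref{eqn:coproduct-product}) asserts that whenever $f\times g$ is pointed and $f\coprod g$ is idempotent, one has $f\coprod g \equivW f\times g$.

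First I would verify the pointedness hypothesis for $f=\C_\IN$ and $g=\C_\Cantor$. Both factors are pointed by the Pointedness Lemma (since $\IN$ and $\Cantor$ are non-empty represented spaces), and the product of two pointed multi-valued functions is trivially pointed by pairing computable points in the respective domains. Hence $\C_\IN \times \C_\Cantor$ is pointed.

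Now suppose for contradiction that $\C_\IN \coprod \C_\Cantor$ is idempotent. Then~(\ref{eqn:coproduct-product}) applies and gives
\[\C_\IN \coprod \C_\Cantor \equivW \C_\IN \times \C_\Cantor.\]
But this directly contradicts Corollary~\ref{cor:coprod-product}, which asserts the strict inequality $\C_\IN \coprod \C_\Cantor \lW \C_\IN \times \C_\Cantor$. Therefore $\C_\IN \coprod \C_\Cantor$ cannot be idempotent.

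There is no real obstacle here: the work has already been done in establishing Corollary~\ref{cor:coprod-product} (which rests on the join-irreducibility of $\C_\IN$ and $\C_\Cantor$) and in the observation~(\ref{eqn:coproduct-product}). The proof is a two-line deduction from those two facts together with the trivial pointedness check.
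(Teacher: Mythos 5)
Your proof is correct and is essentially the paper's own argument: the paper explicitly states that the non-idempotency of $\C_\IN \coprod \C_\Cantor$ follows from Corollary~\ref{cor:coprod-product} together with the observation~(\ref{eqn:coproduct-product}), just as you have laid out.
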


\section{Unique Choice and Inversion}

In this section we briefly discuss a variant of choice,
which we call unique choice. This is choice restricted
to the special case of singletons. We only formulate
unique choice for Hausdorff spaces in order to guarantee
that singletons are closed.

\begin{definition}[Unique Closed Choice]
Let $(X,\delta)$ be a represented Hausdorff space.
We consider the injection $\inj_X:X\into\AA_-(X),x\mapsto\{x\}$.
The partial inverse $\UC_X:\In\AA_-(X)\to X$ of this injection is called
{\em unique closed choice} operation of the space $X$.
\end{definition}

Since unique choice $\UC_X$ is a restriction of choice $\C_X$,
it is clear that $\UC_X\leqW\C_X$ holds.
In some cases we can say more.
In case of $\IN$ it turns out that unique choice is not easier than full choice.
The proof idea is very similar to the proof idea of Proposition~3.3 in \cite{BG09b},
where $\C_\IN$ is reduced to finite choice. We only describe it informally here.

\begin{proposition}
$\UC_\IN\equivW\C_\IN$.
\end{proposition}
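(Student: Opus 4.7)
The reduction $\UC_\IN \leqW \C_\IN$ is immediate since $\UC_\IN$ is a restriction of $\C_\IN$, so the plan focuses on the nontrivial direction $\C_\IN \leqW \UC_\IN$. Given a $\psi_-^\IN$-name $p$ of a non-empty $A \in \AA_-(\IN)$, I would compute a $\psi_-^\IN$-name $q$ of a singleton $\{k^*\}$ chosen so that $k^*$, together with $p$, uniformly determines an element of $A$.

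The key auxiliary object is the computable sequence $c(t) := \min A_t$, where $A_t := \{n \in \IN : n+1 \notin \{p(0),\ldots,p(t-1)\}\}$. Since $A \In A_{t+1} \In A_t$, the sequence is non-decreasing with $c(t) \leq \min(A)$, and since every $n < \min(A)$ is eventually listed by $p$, one has $c(t) \to \min(A)$. I would then set $k^* := \min\{t : c(t) = \min(A)\}$; this is well defined but not computable from $p$ in any direct way, only approximable.

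The computable map $H$ producing $q$ from $p$ would enumerate $i+1$ into the range of $q$ as soon as either (a) some stage $t$ exhibits $c(t) > c(i)$, or (b) some $t' < i$ exhibits $c(t') = c(i)$; both are c.e.\ in $p$. A straightforward case analysis then verifies that $\psi_-^\IN(q) = \{k^*\}$: for $i < k^*$, condition (a) fires with witness $t = k^*$ (since $c(i) < c(k^*) = \min(A)$); for $i > k^*$, condition (b) fires with witness $t' = k^*$ (since $c(i) = c(k^*) = \min(A)$); and at $i = k^*$ both conditions fail by the defining minimality of $k^*$. The map $K$ then reads $k^*$ off $G(q)$ and outputs a $\delta_\IN$-name of $c(k^*) = \min(A) \in A$, which is computable from the finite prefix $p(0),\ldots,p(k^*-1)$.

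The main obstacle is the design of the two kill conditions. Condition (a) alone would spare every stage at which $c$ has already reached its limit, while condition (b) alone would spare every first-occurrence stage of $c$; neither collection is a singleton in general. Their conjunction, however, pinpoints exactly the unique first-occurrence stage of the maximum value of $c$, namely $k^*$, which is precisely what is needed to hand $q$ to $\UC_\IN$ as a legal input.
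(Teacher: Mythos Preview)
Your proposal is correct and follows essentially the same idea as the paper: both arguments track the non-decreasing candidate $c(t)=\min A_t$ and hand $\UC_\IN$ a singleton encoding the first stage at which this candidate stabilises, from which $\min A$ can be read off using the original input $p$. The paper presents this as an on-line algorithm that dynamically revises a target $j$ (requiring the technical adjustment $j=\max\{i,m+1\}$ to keep the emitted enumeration consistent), whereas your formulation defines $k^*$ statically and gives the two explicit c.e.\ kill conditions (a) and (b), which is a cleaner and more transparent way to certify that exactly $\{k^*\}$ survives; the underlying mechanism is the same.
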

\begin{proof}
It is clear that $\UC_\IN\leqW\C_\IN$.
We prove $\C_\IN\leqW\UC_\IN$ by an intuitive description
of a suitable algorithm. Given an enumeration $n_0,n_1,...$
of the complement of a set $A\In\IN$, we choose $c=0$ as starting
candidate for a potential element in $A$ and we choose $j=0$ as starting
position to keep track of where we have to change our mind.
In steps $i=0,1,...$ we inspect the enumeration $n_i$ in order to find the
candidate $c$ and simultaneously we start to generate as output
a negative description of $\{j\}$ by enumerating all numbers $k>j$.
Whenever some $i$ with $c=n_i$ is found, we choose
as new candidate $c$ the minimal element $c=\min(\IN\setminus\{n_0,...,n_i\})$.
Whenever that happens, we choose $j=\max\{i,m+1\}$ as new position, where $m$ is
the largest number that has been produced on the output and now we start
to produce as output a negative description of $\{j\}$ by enumerating all numbers
$m+1,...,j-1$ (if there are any) and then all numbers $k>j$,
while we continue to inspect the sequence $n_{i+1},n_{i+2},...$ to find the new candidate $c$.
If we continue like this, then eventually we will find a candidate $c$ that is actually in $A$
and hence not in the enumeration of the $n_i$. The output will then be a negative description
of $\{j\}$ for some number $j$ that is larger than or equal to the last position in the enumeration
where we had to change our candidate. That is, the number $j$ together with the original
enumeration $n_0,n_1,...$ allows to identify the candidate $c$. The number $j$ can
be obtained from the output with the help of unique choice $\UC_\IN$.
\end{proof}

In case of Baire space we formulate the following conjecture.

\begin{conjecture}
$\UC_\Baire\equivW\C_\Baire$.
\end{conjecture}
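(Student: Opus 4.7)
The trivial direction $\UC_\Baire \leqW \C_\Baire$ is immediate since $\UC_\Baire$ is a restriction of $\C_\Baire$. For the converse $\C_\Baire \leqW \UC_\Baire$, my plan is to extend the priority-style construction of the preceding proposition (which established $\UC_\IN \equivW \C_\IN$) from $\IN$ to the infinite tree of $\IN^*$. Given an enumeration $(w_i)_{i\in\IN}$ of cones with $\Baire\setminus A=\bigcup_i w_i\Baire$ and $A\neq\emptyset$, the goal is to produce, uniformly computably from this input, a negative description of a singleton $\{q\}\In\Baire$ such that $q$ together with $(w_i)$ computes a member of $A$.

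The concrete construction would maintain at each stage $s$ a candidate $c_s\in\Baire$ (committed only up to a growing finite length $\ell_s$) and an output segment constituting a negative description of $\{q\}$, where $q$ encodes the finally stable pair $(c,\ell)$. When $c_s|_{\ell_s}$ is refuted by some freshly enumerated $w_i$, the algorithm selects a new candidate consistent with everything enumerated so far, advances $\ell$ past the current output length, and emits additional cones that exclude all $\Baire$-sequences incompatible with the updated pair. Assuming $A\neq\emptyset$, only finitely many refutations can occur on any bounded initial segment, so $q$ is well defined; applying $\UC_\Baire$ to the output recovers $q$, from which a path in $A$ is decoded with the help of $(w_i)$.

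The main obstacle, and the reason the statement remains a conjecture, is that $\Baire$ offers no canonical way to pick a new candidate: at an infinitely branching node one cannot take ``the least surviving natural'' as in the $\C_\IN$ proof, since the surviving set at a node is itself co-c.e.\ and not computable. A naive level-by-level workaround reduces the construction to countably many independent $\UC_\IN$ instances, which yields at best $\widehat{\UC_\IN}\equivW\widehat{\C_\IN}\equivW\lim$, and this is strictly weaker than $\C_\Baire$ by Example~\ref{ex:parallelization}. A successful proof must therefore exploit $\UC_\Baire$ genuinely beyond its parallelization; plausible routes include a Kleene--Brouwer linearization of the tree associated to $A$ in order to convert the path search into a single $\IN$-valued stabilization inside one ambient $\UC_\Baire$ problem, or storing the input enumeration $(w_i)$ itself as part of the coordinates of $q$ and arguing that the ``correct'' continuation at each node is then pinned down uniquely by $\Pi^0_1$ conditions referring to the stored enumeration. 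Verifying that the resulting output is indeed a $\Pi^0_1$ singleton, uniformly in the input, is the crux of the argument.
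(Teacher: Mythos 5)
This statement is left \emph{open} in the paper --- it appears as a Conjecture, with no proof supplied. You correctly recognize this, prove only the trivial direction $\UC_\Baire\leqW\C_\Baire$, and give a candid analysis of why the priority argument for $\UC_\IN\equivW\C_\IN$ does not lift to Baire space. Your diagnosis is accurate: the $\C_\IN\leqW\UC_\IN$ reduction exploits the well-order of $\IN$ to pick a canonical ``least surviving'' candidate after a refutation, and no such canonical selection is available at an infinitely branching node of $\IN^*$, where the surviving set is itself merely co-c.e. Your observation that the naive level-by-level fallback degenerates into $\widehat{\UC_\IN}\equivW\lim\lW\C_\Baire$ is also correct and is a useful way to see that any successful reduction must use a single $\UC_\Baire$ instance in an essentially non-parallel way. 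The sketched alternatives (Kleene--Brouwer linearization, or packing the input enumeration into the coordinates of the singleton so that $\Pi^0_1$ conditions pin down the path) are plausible directions, but you do not carry either of them out, and in particular you do not verify the crucial requirement that the output is a $\Pi^0_1$ singleton uniformly in the input. In short: your submission is an honest assessment of the difficulty rather than a proof, and it is consistent with the paper's own status of the statement as an unresolved conjecture.
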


On the other hand, unique choice $\UC_\Baire$ is also not too simple.
One can easily see that $\lim\leqW\UC_\Baire$ holds for the limit map
\[\lim:\In\Baire\to\Baire,\langle p_0,p_1,p_2,...\rangle\mapsto\lim_{i\to\infty}p_i\]
and with the methods of the next section it also follows that the cone below $\UC_\Baire$ is closed under composition.
Hence, $\UC_\Baire$ cannot be located on any finite level of the Borel hierarchy.
This can also be deduced from the fact that there are co-c.e.\ closed singletons $\{p\}\In\IN^\IN$
such that $p$ is hyperarithmetical, but not arithmetical (see Propositions~1.8.62 and 1.8.70 in \cite{Nie09}).
We obtain the following corollary as a direct consequence of
Corollaries~\ref{cor:quotient-cantor} and \ref{cor:quotient-reals}
and the previous proposition and the observation that $\UC_\IN\leqW\UC_\IR$ holds.

\begin{corollary}
\label{cor:unique-choice}
$\UC_{\Cantor}\equivW\C_{\{0\}}\equivW\id$ and $\UC_\IR\equivW\C_\IN$.
\end{corollary}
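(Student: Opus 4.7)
The plan is to treat the two equivalences separately, invoking the quotient corollaries for the easy reductions and a small embedding argument for the one remaining direction.

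For the first equivalence, note that $\C_{\{0\}} \equivW \id$ is immediate from the definition of closed choice on a singleton, so only $\UC_{\Cantor} \equivW \id$ is at issue. Here I would argue that $\UC_{\Cantor} \leqW \C_{\Cantor}$ holds trivially since $\UC_{\Cantor}$ is by definition a restriction of $\C_{\Cantor}$. But $\UC_{\Cantor}$ is single-valued, and its codomain $\Cantor$ is a computable metric space, so Corollary~\ref{cor:quotient-cantor} applies and forces $\UC_{\Cantor}$ to be computable; equivalently $\UC_{\Cantor} \equivW \id$.

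For the second equivalence, the $\leqW$ direction is obtained analogously: $\UC_\IR \leqW \C_\IR$ is immediate, and since $\UC_\IR$ is single-valued with codomain the computable metric space $\IR$, Corollary~\ref{cor:quotient-reals} delivers $\UC_\IR \leqW \C_\IN$. For the reverse, the preceding proposition gives $\C_\IN \equivW \UC_\IN$, so it suffices to show $\UC_\IN \leqW \UC_\IR$. The inclusion $\iota:\IN\to\IR$ is a computable embedding, and its range is co-c.e.\ closed in $\IR$ because $\IR\setminus\IN$ is a computable union of rational open intervals. Therefore the induced map $J:\AA_-(\IN)\to\AA_-(\IR)$, $M\mapsto\iota(M)$ is computable (by Theorem~3.7 of \cite{BG09}, as already used in Proposition~\ref{prop:embedding}). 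Since $J$ sends singletons to singletons, $\UC_\IR\circ J$ realizes $\iota\circ \UC_\IN$, and then $\iota^{-1}$ (computable on its range since $\iota$ is an embedding) recovers the natural number, yielding $\UC_\IN\leqW\UC_\IR$. Chaining the reductions gives $\C_\IN\equivW\UC_\IN\leqW\UC_\IR\leqW\C_\IN$.

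There is no real obstacle here: the two quotient corollaries do all the heavy lifting in the nontrivial directions, and the only point that requires a brief sanity check is that the embedding $\iota:\IN\into\IR$ indeed has co-c.e.\ closed range, which is clear. The whole argument therefore reduces to citing the right earlier results in the right order, and the proof will be essentially a few lines long.
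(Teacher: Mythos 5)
Your proposal is correct and follows the same route the paper sketches: the paper derives this corollary from Corollaries~\ref{cor:quotient-cantor} and \ref{cor:quotient-reals}, the preceding proposition $\UC_\IN\equivW\C_\IN$, and ``the observation that $\UC_\IN\leqW\UC_\IR$ holds,'' which it does not spell out. You fill in that last observation with the embedding argument via $\iota:\IN\into\IR$ (co-c.e.\ closed range, Theorem~3.7 of \cite{BG09}, $J$ preserves singletons), which is exactly the right justification and matches the machinery the paper already uses in Proposition~\ref{prop:embedding}; otherwise the argument is the same.
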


We will use the inversion and the graph map as follows
\begin{itemize}
\itemsep 0.3cm
\item $\Inv_{X,Y}:\In\CC(X,Y)\times Y\to X,(f,y)\mapsto f^{-1}(y)$, where\\
        $\dom(\Inv_{X,Y}):=\{(f,y):f$ injective and $y\in\dom(f^{-1})\}$
\item $\graph_{X,Y}:\CC(X,Y)\to\AA_-(X\times Y),f\mapsto\graph(f)$, where\\
        $\graph(f):=\{(x,y)\in X\times Y:f(x)=y\}$.
\end{itemize}

For computable metric spaces $X$ and $Y$ the map $\graph_{X,Y}$ is known
to be computable (see \cite{Bra08}).
It turns out that the map $\Inv_{X,Y}$ is reducible to unique choice of $X$.

\begin{theorem}[Inversion operator]
Let $X$ and $Y$ be computable metric spaces.
Then $\Inv_{X,Y}\leqW\UC_X$.
\end{theorem}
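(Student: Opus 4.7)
The plan is to reduce $\Inv_{X,Y}$ to $\UC_X$ by exhibiting, for every injective $f \in \CC(X,Y)$ and every $y \in \range(f)$, a co-c.e.\ closed singleton $\{f^{-1}(y)\} \in \AA_-(X)$ that can be computed from $(f,y)$. Applying $\UC_X$ to this singleton then yields $f^{-1}(y)$, and the reduction needs no post-processing, so a computable function $H$ realizing the pre-processing is all that is required.

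The key observation is that $\{f^{-1}(y)\}$ is exactly the fiber of $\graph(f)$ over $y$, that is
\[
\{f^{-1}(y)\} = \{x \in X : (x,y) \in \graph(f)\}.
\]
First I would use the fact that $\graph_{X,Y} : \CC(X,Y) \to \AA_-(X \times Y)$ is computable for computable metric spaces (as cited from \cite{Bra08}) to obtain a $\psi^{X\times Y}_-$--name of $\graph(f)$ from a name of $f$. Next I would show that the fiber operation
\[
\fiber : \AA_-(X \times Y) \times Y \to \AA_-(X), \quad (G,y) \mapsto \{x \in X : (x,y) \in G\}
\]
is computable. This is standard: a $\psi^{X\times Y}_-$--name of $G$ enumerates basic open boxes $B_i \times B'_i$ exhausting $(X \times Y) \setminus G$, and the complement of the fiber over $y$ is $\bigcup\{B_i : y \in B'_i\}$; membership of $y$ in an open rational ball $B'_i$ of $Y$ is c.e.\ in a Cauchy name of $y$, so we can enumerate the relevant $B_i$'s as rational balls whose union exhausts $X \setminus \fiber(G,y)$.

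Composing these two computable maps gives a computable $H$ that, on input $(f,y) \in \dom(\Inv_{X,Y})$, produces a $\psi^X_-$--name of $\{f^{-1}(y)\}$. Since $f$ is injective and $y \in \dom(f^{-1})$, this set is indeed a non-empty singleton, hence a legitimate input to $\UC_X$; any realizer of $\UC_X$ applied to this name must return (a name of) $f^{-1}(y)$. Thus $\Inv_{X,Y} = \UC_X \circ H$, which establishes the strong Weihrauch reduction $\Inv_{X,Y} \leqSW \UC_X$ and in particular $\Inv_{X,Y} \leqW \UC_X$.

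The main obstacle, such as it is, lies in verifying the computability of the fiber map, but this is a routine manipulation of the negative information representations in computable metric spaces and requires no new ideas beyond what is already used in the proof of the computability of $\graph_{X,Y}$.
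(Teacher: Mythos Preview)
Your proposal is correct and follows essentially the same approach as the paper: compute $\graph(f)\in\AA_-(X\times Y)$ via the computable map $\graph_{X,Y}$, take the section (your $\fiber$, the paper's $\sec$) over $y$ to obtain the singleton $\{f^{-1}(y)\}\in\AA_-(X)$, and apply $\UC_X=\inj_X^{-1}$. The only difference is cosmetic: you sketch the computability of the section map directly, whereas the paper cites it from \cite{Bra08}.
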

\begin{proof}
In \cite{Bra08} we have established the formula
\[f^{-1}(y)=\inj_X^{-1}\circ\sec(\graph_{X,Y}(f),y),\]
where
$\sec:\AA_-(X\times Y)\times Y\to X,(A,y)\mapsto A_y:=\{x\in X:(x,y)\in A\}$
is the computable section map (see \cite{Bra08}).
Altogether, this shows $\Inv_{X,Y}\leqW\inj_X^{-1}=\UC_X$.
\end{proof}

As a corollary we get that in particular any specific inverse of a computable map is reducible to unique
choice. We can generalize this non-uniform result even to the case of non-injective maps and ordinary choice.
We note that the inverse $f^{-1}:\In Y\mto X,y\mapsto f^{-1}\{y\}$ exists as a multi-valued map
for any single-valued $f:X\to Y$.

\begin{theorem}[Inversion]
\label{thm:inversion}
Let $X$ and $Y$ be computable metric spaces. If\linebreak
$f:X\to Y$ is computable, then $f^{-1}\leqW\C_X$
and if $f$ is also injective, then $f^{-1}\leqW\UC_X$.
\end{theorem}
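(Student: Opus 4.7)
The plan is to realize $f^{-1}$ as a composition of computable operations with a single application of $\C_X$ (respectively $\UC_X$), which immediately yields a strong Weihrauch reduction.

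First I would recall the formula already exploited in the preceding theorem, namely that for the section map $\sec:\AA_-(X\times Y)\times Y\to\AA_-(X)$ one has
\[\sec(\graph(f),y)=\{x\in X:(x,y)\in\graph(f)\}=f^{-1}\{y\},\]
and that $\sec$ is computable (as cited from \cite{Bra08}). Since $f:X\to Y$ is computable and $X,Y$ are computable metric spaces, the map $\graph_{X,Y}$ is computable, so $\graph(f)\in\AA_-(X\times Y)$ is a computable point. Composing, the map $y\mapsto\sec(\graph(f),y)=f^{-1}\{y\}$ is a computable function $Y\to\AA_-(X)$.

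Now I would unwind the definition of $f^{-1}:\In Y\mto X$, $y\mapsto f^{-1}\{y\}$: its domain is exactly the set of $y\in Y$ for which $f^{-1}\{y\}\neq\emptyset$, so that the image under $y\mapsto\sec(\graph(f),y)$ lands in $\dom(\C_X)$. Thus on realizers, if $H$ is a computable realizer of $y\mapsto\sec(\graph(f),y)$ and $G$ is any realizer of $\C_X$, then $G\circ H$ is a realizer of $f^{-1}$. This establishes $f^{-1}\leqSW\C_X$, and in particular $f^{-1}\leqW\C_X$.

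For the injective case, the set $f^{-1}\{y\}$ is a singleton whenever $y\in\range(f)$, so $\sec(\graph(f),y)$ lands in $\dom(\UC_X)$. The very same reduction witnesses $f^{-1}\leqW\UC_X$, since $\UC_X$ is simply the restriction of $\C_X$ to singletons. No genuine obstacle is expected: the computability of $\graph_{X,Y}$ on computable $f$ and of $\sec$ are quoted results, and the only delicate point is checking that $\sec(\graph(f),y)$ is always nonempty (resp.\ a singleton) on the domain of $f^{-1}$, which is immediate from the definitions.
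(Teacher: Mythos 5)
Your proof is correct; it differs from the paper's mildly but in a natural way. The paper builds the computable map $y\mapsto f^{-1}\{y\}$ into $\AA_-(X)$ via the preimage operator on closed sets, $F:\AA_-(Y)\to\AA_-(X)$, $A\mapsto f^{-1}(A)$, composed with the injection $\inj_Y:Y\to\AA_-(Y)$, $y\mapsto\{y\}$, i.e.\ $f^{-1}(y)=\C_X\circ F\circ\inj_Y(y)$ (and with $\inj_X^{-1}$ in place of $\C_X$ in the injective case). You instead reuse the $\graph_{X,Y}$ and $\sec$ machinery from the preceding Inversion operator theorem, writing $f^{-1}\{y\}=\sec(\graph(f),y)$ with $\graph(f)$ a computable point of $\AA_-(X\times Y)$. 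Both constructions yield the same computable map $Y\to\AA_-(X)$, so the reductions coincide; the difference is purely in which auxiliary computable operations one invokes. Your route has the small advantage of uniformity with the proof of the preceding theorem, while the paper's route is a shade more economical since the preimage operator $F$ is exactly tailored to the task. You also correctly verified the domain conditions (nonemptiness, and singleton-valuedness in the injective case) that ensure the intermediate map lands in $\dom(\C_X)$, respectively $\dom(\UC_X)$.
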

\begin{proof}
If $f:X\to Y$ is computable, then $F:\AA_-(Y)\to\AA_-(X),A\mapsto f^{-1}(A)$
is computable too and we obtain
\[f^{-1}(y)=\C_X\circ F\circ\inj_Y(y),\]
i.e.\ $f^{-1}\leq\C_X$
and if $f$ is also injective, then we obtain
$f^{-1}(y)=\inj_X^{-1}\circ F\circ\inj_Y(y)$,
i.e.\ $f^{-1}\leqW\UC_X$.
\end{proof}

We mention that a multi-valued function $f$ on represented spaces
is called {\em weakly computable},
if $f\leqW\C_{\{0,1\}^\IN}$ and $f$ is called {\em computable with finitely many mind changes}
if it can be computed on a Turing machine that revises its output at most finitely many times
for each particular input. In Theorem~\ref{thm:choice-natural} we will show that the latter
is equivalent to $f\leqW\C_\IN$.
We get the following result as a corollary of Theorem~\ref{thm:inversion}
and Corollary~\ref{cor:unique-choice}.

\begin{corollary}[Compact inversion]
\label{cor:compact-inversion}
Let $X$ and $Y$ be computable metric spaces and let $X$ be computably compact.
If $f:X\to Y$ is computable, then $f^{-1}$ is weakly computable, if $f$ is also injective,
then $f^{-1}$ is even computable.
\end{corollary}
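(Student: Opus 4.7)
The plan is to chain Theorem~\ref{thm:inversion}, Corollary~\ref{cor:polish-compact}, and Corollary~\ref{cor:quotient-cantor}. Both conclusions rest on the same preliminary observation: a computably compact computable metric space is automatically complete, hence a computable Polish space, so Corollary~\ref{cor:polish-compact} applies and gives $\C_X\leqW\C_\Cantor$.

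For the first assertion, Theorem~\ref{thm:inversion} supplies $f^{-1}\leqW\C_X$. Composing with $\C_X\leqW\C_\Cantor$ and using transitivity of Weihrauch reducibility yields $f^{-1}\leqW\C_\Cantor$, which is precisely the definition of weak computability given just before the corollary.

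For the second assertion, injectivity of $f$ makes $f^{-1}$ a single-valued (partial) map into the computable metric space $X$. The chain above still delivers $f^{-1}\leqW\C_\Cantor$, and now Corollary~\ref{cor:quotient-cantor} (the Cantor case of the Quotient Theorem) applies: any single-valued function into a computable metric space that is Weihrauch-reducible to $\C_\Cantor$ is already computable. Thus $f^{-1}$ is computable. An equivalent route would be to use the injective clause of Theorem~\ref{thm:inversion} to get $f^{-1}\leqW\UC_X$ and then first deduce computability of $\UC_X$ itself from $\UC_X\leqW\C_X\leqW\C_\Cantor$ via Corollary~\ref{cor:quotient-cantor}, since $\UC_X$ is single-valued into $X$ by definition.

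There is no real obstacle; the corollary is essentially a bookkeeping consequence of the preceding theory. The only conceptual point worth emphasising is that the Quotient Theorem is exactly the tool that upgrades the ``weakly computable'' conclusion of the first part to the stronger ``computable'' conclusion of the second, once injectivity of $f$ supplies the required single-valuedness of $f^{-1}$.
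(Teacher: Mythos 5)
Your argument is correct and matches the paper's intent. The paper compresses the proof into a one-line citation of Theorem~\ref{thm:inversion} and Corollary~\ref{cor:unique-choice}, but Corollary~\ref{cor:unique-choice} itself rests on the Quotient Theorem (Corollary~\ref{cor:quotient-cantor}); you have simply unfolded that dependency and invoked the Quotient Theorem directly, together with the (correct) observation that a computably compact computable metric space is complete, hence Polish, so Corollary~\ref{cor:polish-compact} gives $\C_X\leqW\C_\Cantor$. Either of your two routes for the injective case (applying the Quotient Theorem to $f^{-1}$ directly, or first showing $\UC_X$ is computable) is valid, and both are faithful to what the paper is doing.
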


The second part of the statement was known as such (see, for instance, \cite{Bra08}).
The following corollary is also a consequence of Theorem~\ref{thm:inversion}
and Corollary~\ref{cor:unique-choice}.

\begin{corollary}[Locally compact inversion]
Let $X$ be a computable $K_\sigma$--space and let $Y$ be a computable metric space.
If $f:X\to Y$ is computable, then $f^{-1}\leqW\C_\IR$ and if $f$ is also injective then
$f^{-1}\leqW\C_\IN$, hence $f^{-1}$ is computable with finitely many mind changes.
\end{corollary}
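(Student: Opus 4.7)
The plan is to chain together previously established reductions, with no new constructions required. First I would apply Theorem~\ref{thm:inversion} directly: since $f:X\to Y$ is computable and both $X$ and $Y$ are computable metric spaces, we obtain $f^{-1}\leqW\C_X$. Next, since $X$ is by assumption a computable $K_\sigma$--space, Proposition~\ref{prop:locally-compact-choice} gives $\C_X\leqW\C_\IN\times\C_\Cantor$, and Corollary~\ref{cor:real-choice} identifies this with $\C_\IR$. Transitivity of $\leqW$ then yields $f^{-1}\leqW\C_\IR$, which is the first assertion.

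For the second assertion, the key observation is that when $f$ is additionally injective, the partial multi-valued map $f^{-1}:\In Y\mto X$ becomes in fact single-valued, because each fiber $f^{-1}\{y\}$ is a singleton. Since $X$ is a computable metric space (this is built into the definition of a computable $K_\sigma$--space) and $Y$ is a computable metric space, we are exactly in the situation covered by Corollary~\ref{cor:quotient-reals}: a single-valued function from a represented space to a computable metric space that is Weihrauch reducible to $\C_\IR$ is already reducible to $\C_\IN$. Applied to $f^{-1}$, this gives $f^{-1}\leqW\C_\IN$. The final clause that $f^{-1}$ is computable with finitely many mind changes is then an immediate consequence of the characterization, to be established as Theorem~\ref{thm:choice-natural}, that the class of functions computable with finitely many mind changes coincides with the cone below $\C_\IN$.

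There is really no obstacle here beyond bookkeeping; the result is a corollary in the strict sense, bundling the inversion theorem with the $K_\sigma$--decomposition of $\C_X$ and the quotient theorem for $\C_\IR$. The one point that deserves explicit mention in the writeup is that the injectivity of $f$ is used precisely to ensure single-valuedness of $f^{-1}$, which is the hypothesis needed to invoke Corollary~\ref{cor:quotient-reals}; without injectivity only the weaker bound $f^{-1}\leqW\C_\IR$ is available.
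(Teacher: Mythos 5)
Your proof is correct and follows essentially the route the paper intends. The one cosmetic difference is that the paper advertises the injective case as a consequence of Theorem~\ref{thm:inversion} (which gives $f^{-1}\leqW\UC_X$) together with Corollary~\ref{cor:unique-choice} ($\UC_\IR\equivW\C_\IN$), whereas you bypass $\UC_X$ entirely by noting that injectivity makes $f^{-1}$ single-valued with values in a computable metric space, so that Corollary~\ref{cor:quotient-reals} applies directly to $f^{-1}\leqW\C_\IR$. Since Corollary~\ref{cor:unique-choice} is itself derived from Corollary~\ref{cor:quotient-reals}, this is the same underlying argument; if anything your version is slightly cleaner because it avoids having to establish $\UC_X\leqW\UC_\IR$ for a general computable $K_\sigma$--space $X$, which the paper never states explicitly. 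Your explicit observations -- that injectivity is used precisely to secure single-valuedness, and that a computable $K_\sigma$--space is in particular a computable metric space so that the hypotheses of Corollary~\ref{cor:quotient-reals} are met -- are exactly the right points to flag, and the final appeal to Theorem~\ref{thm:choice-natural} for the mind-changes characterization is correct.
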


These results are not necessarily optimal. For instance, it is known that the
inverse of an injective computable map $f:\IR\to\IR$ is even computable.
However, for this result one has to exploit additional properties of $\IR$,
such as connectedness properties (see \cite{Bra08}).
We give some example that shows that the inversion results do
not hold true for arbitrary represented spaces.
By $\IR_<$ we denote the set of real numbers equipped with the left cut representation $\rho_<$,
which represents a real number $x$ by an enumeration of all rational numbers $q<x$ (see \cite{Wei00}).

\begin{example}
Let $\IR$ denote the real number represented with the Cauchy representation
and let $\IR_<$ denote the real number denoted with the left cut representation.
The identity $f:\IR\to\IR_<,x\mapsto x$ is computable and its inverse
$f^{-1}:\IR_<\to\IR$ is known to be equivalent to $\lim$ (see Proposition~3.7 in \cite{BG09b} and Exercise~8.2.12 in \cite{Wei00}).
In particular, $f^{-1}$ is not reducible to $\C_\IR$.
\end{example}

\section{Choice on Baire Space and Non-Deterministic Computability}
\label{sec:non-determinism}

In this section we will compare the power of choice for certain spaces with
models of hypercomputation that have been considered.
This approach to classify models of hypercomputation in terms of Weihrauch reducibility
has been started in \cite{Pau09c}.
Here, the relevant models of hypercomputation are non-determi\-nistically computable
functions and functions computable with revising computations in the sense of Martin Ziegler \cite{Zie07a,Zie07}.
The latter ones are also known as functions computable with finitely many mind-changes,
for instance in learning theory \cite{BY09,BY09a}.

In \cite{Zie07a} Martin Ziegler has introduced a concept of non-deterministically
computable functions. We generalize this concept to advice spaces that are subsets of Baire space
and we prove that this concept can be characterized by choice for the advice space.
This characterization yields some interesting consequences.

\begin{definition}[Non-deterministic computability]
\label{def:non-deterministic}
Let $(X,\delta_X)$, $(Y,\delta_Y)$ be represented spaces and let $A\In\Baire$.
A function $f:\In X\mto Y$ is said to be {\em non-deterministically computable with advice space $A$},
if there exist two computable functions
$F_1,F_2:\In \Baire\to\Baire$ such that $\langle\dom(f\delta_X)\times A\rangle\In\dom(F_2)$ and
for each $p\in\dom(f\delta_X)$ the following hold:
\begin{enumerate}
\item $(\exists r\in A)\;\delta_\IS F_2\langle p,r\rangle=0$,
\item $(\forall r\in A)(\delta_\IS F_2\langle p,r\rangle=0\TO\delta_YF_1\langle p,r\rangle\in f\delta_X(p))$.
\end{enumerate}
\end{definition}

Here $A\In\Baire$ is considered as subspace of Baire space.
Intuitively, the set $A$ serves as a set of possible advices that can give extra
support to the computation. Any computation can be successful
or it can fail, which is indicated by the output of $F_2$
(where ``$1$'' means the advice is recognized to fail after finite time
and ``$0$'' means the advice is successful in the long run).
That is $F_2$ can be considered as a realizer of a function $f_2:\In\IN^\IN\to\IS$.
The set
\[A_p:=\{r\in A:\delta_\IS F_2\langle p,r\rangle=0\}=\{r\in A:f_2\langle p,r\rangle=0\}\]
is the set of successful advices for input $p\in\dom(f\delta_X)$.
Intuitively, $F_2$ is a method to recognize unsuccessful advices and
$F_1$ is a method to determine the output of the computation for successful advices.
The two conditions then express intuitively that for each fixed admissible input the following hold:
\begin{enumerate}
\item There exists a successful advice for this input.
\item Each successful advice produces a correct output.
\end{enumerate}
Functions that are non-deterministically computable in the sense of \cite{Zie07a}
are non-deterministically computable with full Baire space $\Baire$ as advice space\footnote{The
advice space is not made explicit in \cite{Zie07a}, but we conclude implicitly that the advice space $\Baire$ is meant.}.
Now we can prove the following equivalence.

\begin{theorem}[Non-deterministic computability]
\label{thm:non-deterministic}
Let $X$ and $Y$ be represented spaces, $A\In\Baire$ and
let $f:\In X\mto Y$ be a multi-valued function.
Then the following are equivalent:
\begin{enumerate}
\item $f\leqW\C_A$,
\item $f$ is non-deterministically computable with advice space $A$.
\end{enumerate}
\end{theorem}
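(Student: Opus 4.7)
The plan is to prove the equivalence by giving an explicit translation in each direction between the Weihrauch reduction witnesses $(H,K)$ and the non-deterministic witnesses $(F_1,F_2)$. The entire argument hinges on the fact that the definition of $\psi_-^A$ is precisely a way of packaging a continuous Sierpi\'nski-valued map $\chi_{A\sm B}:A\to\IS$: a $\psi_-^A$-name of $B\In A$ is, by construction, a $\delta^\circ$-name of $A\sm B$, which via the function-space representation is the same as a name of the characteristic function $\chi_{A\sm B}$. Hence the predicate ``$r\in A\sm B$'' can be uniformly semi-decided from a $\psi_-^A$-name of $B$ together with the point $r$, and the reverse translation from a computable $F_2$ to a name of a closed subset of $A$ is equally uniform.

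For $(1)\TO(2)$, fix computable $H,K$ with $K\circ\langle\id,G\circ H\rangle\vdash f$ for every realizer $G$ of $\C_A$, and abbreviate $B_p:=\psi_-^A H(p)$. I set $F_1\langle p,r\rangle:=K\langle p,r\rangle$ and define $F_2$ so that on input $\langle p,r\rangle$ it uniformly computes a $\delta_\IS$-name of $\chi_{A\sm B_p}(r)$; then $\delta_\IS F_2\langle p,r\rangle=0$ exactly when $r\in B_p$. Condition~1 of Definition~\ref{def:non-deterministic} is then immediate from non-emptiness of $B_p$. For condition~2, given $r\in B_p$, the axiom of choice yields some realizer $G$ of $\C_A$ with $G(H(p))=r$ (permissible because realizers need not be continuous or computable); the reduction hypothesis then delivers $\delta_Y F_1\langle p,r\rangle=\delta_Y K\langle p,G(H(p))\rangle\in f\delta_X(p)$.

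For $(2)\TO(1)$, fix computable $F_1,F_2$ witnessing non-deterministic computability with advice $A$ and set $A_p:=\{r\in A:\delta_\IS F_2\langle p,r\rangle=0\}$. The map $r\mapsto\delta_\IS F_2\langle p,r\rangle$ is, uniformly in $p$, a computable map $A\to\IS$ whose preimage of $1$ is $A\sm A_p$; by the definition of $\delta^\circ$ this yields uniformly in $p$ a $\psi_-^A$-name of $A_p$. Let $H$ be this computable map; condition~1 guarantees $A_p\not=\emptyset$, hence $H(p)\in\dom(\C_A)$. Put $K\langle p,r\rangle:=F_1\langle p,r\rangle$. For any realizer $G$ of $\C_A$ the point $r:=G(H(p))$ lies in $A_p$, and condition~2 then gives $\delta_Y K\langle p,r\rangle\in f\delta_X(p)$, so $K\circ\langle\id,GH\rangle\vdash f$, establishing $f\leqW\C_A$.

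The only real care required is bookkeeping around the subspace representation of $A\In\Baire$ (so that names of points $r\in A$ are literally elements $r\in A$) and the function-space representation used to convert between Sierpi\'nski-valued maps and closed-set names. There is no deep obstacle; the definitions of $\psi_-^A$ and of non-deterministic computability with advice space $A$ were designed to line up through the tautological identification of closed subsets of $A$ with their complementary open sets.
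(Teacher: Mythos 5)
Your proposal is correct and matches the paper's own proof in all essentials: both directions proceed by the same type-conversion argument, converting a $\psi_-^A$-name of a closed set into the characteristic function of its complement (and back) via currying/evaluation, using $K$ (resp.\ $F_1$) as the outer function, and invoking the axiom of choice to produce a realizer $G$ of $\C_A$ hitting any prescribed advice $r$ in the closed set. The only difference is expository: you are somewhat more explicit about the identification $\psi_-^A\cong[\delta_A\to\delta_\IS]$ and about why noncomputable, noncontinuous realizers are permitted, but the underlying argument is identical.
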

\begin{proof}
We consider the represented spaces $(X,\delta_X)$ and $(Y,\delta_Y)$.
Let $f$ be non-deterministically computable with advice space $A$.
Then there are computable functions $F_1,F_2$ according to Definition~\ref{def:non-deterministic}.
By type conversion and since $\langle\dom(f\delta_X)\times A\rangle\In\dom(F_2)$ we can transfer $F_2$ into a computable function
\[h:\In\Baire\to\CC(A,\IS),p\mapsto(r\mapsto \delta_\IS F_2\langle p,r\rangle).\]
Hence, for each $p\in\dom(f\delta_X)$ the function $h(p)=\chi_{A\setminus A_p}$ is a characteristic function of
the closed set $A_p\in\AA_-(A)$ of successful advices. Here $h$ can also be considered as computable function
of type $h:\In\Baire\to\AA_-(A),p\mapsto A_p$. By condition (1) of Definition~\ref{def:non-deterministic} we
obtain that $A_p\not=\emptyset$ for any $p\in\dom(f\delta_X)$ and by condition (2) we obtain
$\delta_YF_1\langle p,\C_Ah(p)\rangle\In f\delta_X(p)$.
Let $H$ be a computable realizer of $h$.
Then $F_1\langle \id,GH\rangle$ is a realizer of $f$ for any realizer $G$ of $\C_A$
and hence $f\leqW\C_A$.

On the other hand, let $f\leqW\C_A$. Then any realizer $G$ of $\C_A$
computes some realizer $F$ of $f$, i.e.\ there are computable functions
$H,K$ such that for all realizers $G$ of $\C_A$
there is some realizer $F$ of $f$ such that $F(p)=K\langle p,GH(p)\rangle$ for all $p\in\dom(f\delta_X)$.
Now we describe maps $F_1,F_2:\In\Baire\to\Baire$ that satisfy
the conditions of Definition~\ref{def:non-deterministic} for $f$.
For each $p\in\dom(f\delta_X)$ the function $H$ computes a non-empty set $A_p=\psi^A_-H(p)$
and by evaluation there exists a computable function $F_2$ such that
$\delta_\IS F_2\langle p,r\rangle=\chi_{A\setminus A_p}(r)$ for all $p\in\dom(f\delta_X)$ and $r\in A$.
That means to choose $A_p$ as the set of successful advices.
We can also choose $F_1:=K$ and verify the conditions (1) and (2) of Definition~\ref{def:non-deterministic}.
Firstly, it is clear that $A_p\not=\emptyset$ for all $p\in\dom(f\delta_X)$ and hence $F_2$ satisfies condition (1).
Secondly, for each $r\in A_p$ there is a realizer $G$ of $\C_A$ such that $GH(p)=r$ and hence
we obtain $F_1\langle p,r\rangle=K\langle p,GH(p)\rangle=F(p)$ for a realizer $F$ of $f$.
This implies $\delta_YF_1\langle p,r\rangle\in f\delta_X(p)$ and hence condition (2) holds as well.
Altogether $f$ is non-deterministically computable with advice space $A$.
\end{proof}

The main benefit of this characterization of closed choice is that using it
we can easily prove the following theorem that shows that the
advice for compositions can be determined a priori and independently.
We note that due to the fact that Baire space admits a computable and bijective
pairing function, we can always consider $A\times B$ as subspace of Baire space
for any two subspaces $A,B$ of Baire space.

\begin{theorem}[Independent Choice]
\label{thm:independent}
Let $A,B\In\Baire$ and let
$f$ and $g$ be multi-valued functions on represented spaces.
If $f\leqW\C_A$ and $g\leqW\C_B$, then $f\circ g\leqW\C_{A\times B}$.
\end{theorem}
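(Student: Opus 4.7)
My proof would run through Theorem~\ref{thm:non-deterministic}, which allows us to move freely between reducibility to $\C_A$ and non-deterministic computability with advice space $A$. Unpacking the two hypotheses yields computable $F_1,F_2$ witnessing that $f$ is non-deterministically computable with advice $A$, and $G_1,G_2$ witnessing the same for $g$ with advice $B$. I would combine these into maps $H_1,H_2$ witnessing non-deterministic computability of $f\circ g$ with advice space $A\times B$ and then re-apply the theorem in the reverse direction.

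For a name $p$ of some $x\in\dom((f\circ g)\delta_X)$ and a prospective advice $\langle r,s\rangle\in A\times B$, I would set
\[H_1\langle p,\langle r,s\rangle\rangle := F_1\langle G_1\langle p,s\rangle,r\rangle.\]
If $s$ is a successful advice for $g$ on $p$, then $q:=G_1\langle p,s\rangle$ is a legitimate name of some $y\in g(x)\In\dom(f)$; if in addition $r$ is a successful advice for $f$ on $q$, then $F_1\langle q,r\rangle$ names some $z\in f(y)\In(f\circ g)(x)$, giving condition~(2) of Definition~\ref{def:non-deterministic}. Condition~(1) is obtained by first picking a successful $s$ for $p$ and then a successful $r$ for the resulting $q$.

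For the verification map I would compute $H_2\langle p,\langle r,s\rangle\rangle$ by running $G_2\langle p,s\rangle$ and $F_2\langle G_1\langle p,s\rangle,r\rangle$ in parallel, feeding $F_2$ the growing prefixes of $G_1\langle p,s\rangle$, and emitting a $1$ at every step unless one of the two simulations has already produced a $0$, in which case $H_2$ emits a $0$ at that step and forever after. Under $\delta_\IS$ this is exactly the conjunction of the two success predicates, so $\langle r,s\rangle$ is successful for the pair $(H_1,H_2)$ iff $s$ is successful for $g$ on $p$ and $r$ is successful for $f$ on $G_1\langle p,s\rangle$.

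The main technical point --- and the only place care is required --- is the totality condition $\langle\dom((f\circ g)\delta_X)\times(A\times B)\rangle\In\dom(H_2)$: for an unsuccessful $s\in B$ the output $G_1\langle p,s\rangle$ need not lie in $\dom(\delta_Y)$, let alone in the domain of $F_2\langle\cdot,r\rangle$, so a naive sequential composition would not be defined. The parallel design circumvents this because an unsuccessful $s$ eventually forces $G_2\langle p,s\rangle$ to output a $0$, at which moment $H_2$ commits to an all-$0$ tail and ignores the $F_2$ branch; hence $H_2$ is total on the required domain. Theorem~\ref{thm:non-deterministic} then yields $f\circ g\leqW\C_{A\times B}$.
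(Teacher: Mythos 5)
Your proof is correct and follows essentially the same route as the paper's: reduce to Theorem~\ref{thm:non-deterministic}, set $H_1\langle p,\langle r,s\rangle\rangle = F_1\langle G_1\langle p,s\rangle,r\rangle$, and build $H_2$ as the ``conjunction'' of the two failure detectors. The only difference is presentational: where the paper asserts ``such a computable $H_2$ exists'' after observing that $\delta_\IS G_2\langle p,s\rangle=0$ puts $G_1\langle p,s\rangle$ into $\dom(f\delta_Y)$, you spell out the parallel dovetailing that makes $H_2$ total even when $s$ is unsuccessful --- a useful clarification, but not a different argument.
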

\begin{proof}
We consider represented spaces $(X,\delta_X)$, $(Y,\delta_Y)$ and $(Z,\delta_Z)$.
Let now $f:\In Y\mto Z$ and $g:\In X\mto Y$ be non-deterministically computable with advice spaces
$A$ and $B$, respectively.
Due to Theorem~\ref{thm:non-deterministic} it suffices to show that
$f\circ g$ is non-deterministically computable with advice space $A\times B$.
Intuitively, we can choose an advice $(r,s)\in A\times B$ and use advice $r$
for $f$ and advice $s$ for $g$. More precisely, let $f$ and $g$ be non-deterministically
computable using computable functions $F_1,F_2$ and $G_1,G_2$ according to Definition~\ref{def:non-deterministic},
respectively.
We define $H_1$ and $H_2$ that witness non-deterministic computability of $f\circ g$
with advice space $A\times B$.
We can define a computable $H_1$ by
\[H_1\langle p,\langle r,s\rangle\rangle:=F_1\langle G_1\langle p,s\rangle,r\rangle\]
and there exists a computable $H_2$ such that
\[\delta_\IS H_2\langle p,\langle r,s\rangle\rangle
  =\left\{\begin{array}{ll}
  1 & \mbox{if $\delta_\IS G_2\langle p,s\rangle=1$}\\
  \delta_\IS F_2\langle G_1\langle p,s\rangle,r\rangle & \mbox{otherwise}
\end{array}\right.\]
for all $p\in\dom(fg\delta_X)$ and all $(r,s)\in A\times B$.
Such a computable $H_2$ exists, since $\delta_\IS G_2\langle p,s\rangle=0$ implies
that $\delta_Y G_1\langle p,s\rangle\in g(\delta_X(p))\In\dom(f)$.
Now we verify that $H_1$ and $H_2$ satisfy conditions (1) and (2) of
Definition~\ref{def:non-deterministic} for $f\circ g$.
To this end, let $p\in\dom(fg\delta_X)$.

By condition (1) for $g$ there is an $s\in B$ such that $\delta_\IS G_2\langle p,s\rangle=0$
and hence by condition (2) for $g$ we obtain $\delta_YG_1\langle p,s\rangle\in\dom(f)$.
Hence by condition (1) for $f$ there is an $r\in A$ such that $\delta_\IS F_2\langle G_1\langle p,s\rangle,r\rangle=0$
and thus $\delta_\IS H_2\langle p,\langle r,s\rangle\rangle=0$, which shows that condition (1) also holds for $fg$.

Now let $(r,s)\in A\times B$ be such that $\delta_\IS H_2\langle p,\langle r,s\rangle\rangle=0$.
Then $\delta_\IS G_2\langle p,s\rangle=0$ and $\delta_\IS F_2\langle G_1\langle p,s\rangle,r\rangle=0$.
Hence by conditions (2) for $g$ and $f$ we obtain $\delta_YG_1\langle p,s\rangle\in g\delta_X(p)$
and hence $\delta_ZF_1\langle G_1\langle p,s\rangle,r\rangle\in fg\delta_X(p)$, which proves
condition (2) for $fg$.
\end{proof}

We recall that we call a multi-valued function $h$ on represented spaces
{\em closed under composition} if the principal ideal of $h$ is closed
under composition, i.e.\ if $f\leqW h$ and $g\leqW h$ implies $f\circ g\leqW h$ (for $f$ and $g$
of appropriate type).
It is worth pointing out that closure under composition entails idempotency.

\begin{proposition}
\label{prop:composition-idempotency}
Every multi-valued function $f$ on represented spaces that is closed under composition is also idempotent.
\end{proposition}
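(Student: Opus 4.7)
The plan is to prove both directions of $f \equivW f \times f$ separately, with the nontrivial direction being $f \times f \leqW f$, which will use closure under composition in an essential way.

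First I would handle the trivial direction $f \leqW f \times f$ by simple duplication: given a realizer $G \vdash f \times f$, the function $p \mapsto \pi_1 G \langle p, p \rangle$ realizes $f$, where $\pi_1$ picks out the first component of a pair. This step uses no hypothesis on $f$ beyond the existence of pairing on Baire space, and in fact yields $f \leqSW f \times f$.

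For the harder direction, write $f \colon \In X \mto Y$ and consider the two ``partial'' variants $f \times \id_X \colon \In X \times X \mto Y \times X$ and $\id_Y \times f \colon \In Y \times X \mto Y \times Y$. Both of these are reducible to $f$ in an obvious way: on input $(x_1, x_2)$ for $f \times \id_X$ one calls $f$ on $x_1$ and passes $x_2$ through unchanged, and similarly for $\id_Y \times f$. A direct computation on arbitrary $(x_1, x_2) \in \dom(f) \times \dom(f)$ shows
\[ (\id_Y \times f) \circ (f \times \id_X)(x_1,x_2) = (f(x_1), f(x_2)) = (f \times f)(x_1,x_2), \]
so the two functions genuinely compose to $f \times f$, and the types match so that closure under composition applies. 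Invoking the hypothesis that $f$ is closed under composition then yields $f \times f \leqW f$.

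Combining both reductions gives $f \times f \equivW f$, which is exactly idempotency. The main (and only) subtle point is the factorization of $f \times f$ through the two one-sided products; once this decomposition is identified, everything else is formal. I do not anticipate a real obstacle, since the closure hypothesis is strong enough to absorb the one-sided products and the composition of multi-valued maps behaves well on the domains in question.
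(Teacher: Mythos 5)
Your proof is correct and uses essentially the same decomposition as the paper's: the paper writes $f\times f=(f\times\id_Y)\circ(\id_X\times f)$ and observes that both factors reduce to $f$, whereas you use the mirror-image factorization $(\id_Y\times f)\circ(f\times\id_X)$ — a purely cosmetic difference. Your explicit treatment of the easy direction $f\leqW f\times f$ is fine as well, though the paper leaves it implicit.
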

\begin{proof}
Let $f:\In X\mto Y$ be a multi-valued function on represented spaces. Then we have
$f\times f=(f\times\id_Y)\circ(\id_X\times f)$ and $f\times\id_Y\leqW f$ and $\id_X\times f\leqW f$.
That is, if $f$ is closed under composition, then $f\times f\leqW f$.
\end{proof}

We get the following consequence of Theorem~\ref{thm:independent}, which is
a strengthening of Corollary~\ref{cor:idempotent} for $A\In\IN^\IN$.

\begin{corollary}[Closure under composition]
\label{cor:composition}
Let $A\In\Baire$ be a subspace of Baire space. If there is a computable surjection
$s:A\to A^2$, then $\C_{A\times A}\leqW\C_A$ and hence $\C_A$
is closed under composition and idempotent.
\end{corollary}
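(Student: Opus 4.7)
The plan is to chain together the three results already proved in the paper. First I would apply the Surjections result (Proposition~4.3) to the hypothesised computable surjection $s:A\to A^2$: since $s$ is total on $A$, its domain is trivially co-c.e.\ closed in $A$, and Proposition~4.3 immediately yields $\C_{A\times A}\leqW\C_A$. This is the content of the first conclusion of the corollary and mirrors the argument already used in Corollary~4.6.

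Next I would derive closure under composition. Suppose $f\leqW\C_A$ and $g\leqW\C_A$ (with compatible types so that $f\circ g$ makes sense). By the Independent Choice Theorem (Theorem~7.3), applied with $B=A$, we obtain $f\circ g\leqW\C_{A\times A}$. Combining with the first step, $\C_{A\times A}\leqW\C_A$, and using the transitivity of $\leqW$ gives $f\circ g\leqW\C_A$. Since $f$ and $g$ were arbitrary elements of the principal ideal of $\C_A$, this is precisely the statement that $\C_A$ is closed under composition.

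Finally, Proposition~7.4 asserts that every multi-valued function on represented spaces that is closed under composition is idempotent, so $\C_A$ is idempotent as claimed.

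The whole argument is essentially routine once the Independent Choice Theorem is in hand; no step presents a real obstacle. The only mild subtlety is keeping track of representations: to invoke Theorem~7.3 one needs $A\times A$ to be realised as a subspace of Baire space via the computable pairing function $\langle\cdot,\cdot\rangle:\Baire\times\Baire\to\Baire$, which is exactly the convention fixed in the paragraph preceding Theorem~7.3, so no further bookkeeping is required.
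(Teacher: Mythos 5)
Your proof is correct and matches exactly the argument the paper intends, which is to combine Proposition~\ref{prop:surjection} (to get $\C_{A\times A}\leqW\C_A$ from the computable surjection $s:A\to A^2$, whose domain $A$ is trivially co-c.e.\ closed in itself) with the Independent Choice Theorem~\ref{thm:independent} (to get $f\circ g\leqW\C_{A\times A}$), and then invoke Proposition~\ref{prop:composition-idempotency} for idempotency. You have also correctly noted the representation-theoretic bookkeeping needed to treat $A\times A$ as a subspace of Baire space, which the paper disposes of in the remark preceding Theorem~\ref{thm:independent}.
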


In particular, we can apply this result in the following cases.

\begin{corollary}
The choice functions
$\C_\IN,\C_{\Cantor},\C_{\Baire},\C_{\IN\times\Cantor}$ and hence $\C_\IR$ are closed under composition and idempotent.
\end{corollary}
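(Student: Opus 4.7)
The plan is to apply Corollary~\ref{cor:composition} to each of the four spaces $\IN$, $\Cantor$, $\Baire$ and $\IN \times \Cantor$, and then transfer the conclusion to $\C_\IR$ via the equivalence $\C_\IR \equivW \C_{\IN \times \Cantor}$ established in Corollary~\ref{cor:real-choice}. First I would note that each of $\IN$, $\Cantor$, $\Baire$ and $\IN \times \Cantor$ is naturally a subspace of Baire space under its standard representation (for $\IN$ via $n \mapsto n\widehat{0}$; for $\Cantor$ via inclusion; for $\Baire$ trivially; and for $\IN \times \Cantor$ via the composition of the product representation with the canonical pairing). Hence the hypothesis of Corollary~\ref{cor:composition} is applicable provided we exhibit a computable surjection $s \colon A \to A^2$ in each case.

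Such surjections exist because each of these spaces admits a computable and bijective pairing function into itself, as was already observed just before the Corollary on idempotency of $\C_\IN$, $\C_\Cantor$, $\C_\Baire$ and $\C_{\IN \times \Cantor}$. Explicitly, for $\IN$ one uses any standard computable bijection $\IN \times \IN \to \IN$; for $\Cantor$ and $\Baire$ one uses the componentwise pairing $\langle p, q \rangle$ restricted or applied appropriately (a bijection from $\Baire \times \Baire$ to $\Baire$ exists and, suitably chosen, restricts to a bijection $\Cantor \times \Cantor \to \Cantor$); and for $\IN \times \Cantor$ one combines these. Inverses of such computable bijections are themselves surjections, giving the required $s$ in each case. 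Applying Corollary~\ref{cor:composition} then yields that $\C_\IN$, $\C_\Cantor$, $\C_\Baire$ and $\C_{\IN \times \Cantor}$ are closed under composition and idempotent.

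For $\C_\IR$, I would invoke Corollary~\ref{cor:real-choice}, which gives $\C_\IR \equivW \C_{\IN \times \Cantor}$. Since closure under composition and idempotency are properties of the Weihrauch degree rather than of a specific representative (the principal ideal depends only on the degree, and idempotency is defined degree-theoretically via $f \times f \equivW f$), both properties transfer from $\C_{\IN \times \Cantor}$ to $\C_\IR$.

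I do not anticipate any serious obstacle: the only subtle point is verifying that each of the four spaces really does embed in $\Baire$ in a way that is compatible with its negative-information representation, so that Corollary~\ref{cor:composition} (stated for subspaces of Baire space) legitimately applies; but this is immediate from the explicit descriptions of $\psi^X_-$ given in the introduction. The bulk of the work was done in Theorem~\ref{thm:independent} and Corollary~\ref{cor:composition}, and here it only remains to verify the surjection hypothesis and invoke the known equivalence for the reals.
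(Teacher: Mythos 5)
Your proposal is correct and matches the paper's (implicit) argument: the paper simply notes ``In particular, we can apply this result in the following cases'' after Corollary~\ref{cor:composition}, relying on the fact already recorded before Corollary~\ref{cor:idempotent} that $\IN$, $\Cantor$, $\Baire$ and $\IN\times\Cantor$ admit computable bijective pairing functions, and on $\C_\IR\equivW\C_{\IN\times\Cantor}$ from Corollary~\ref{cor:real-choice}. Your treatment of the subtlety about realizing each space as a subspace of $\Baire$ (so Corollary~\ref{cor:composition} applies) and the degree-theoretic transfer of idempotency and closure under composition to $\C_\IR$ is exactly the intended reasoning.
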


For most of these functions this was known. However, the proofs in \cite{GM09} and \cite{BG09a}
for the case $\C_{\Cantor}$ are considerably more difficult whereas the Independent Choice Theorem~\ref{thm:independent}
has a simple proof and covers many cases simultaneously.
The results for $\C_{\IN\times\Cantor}$ and $\C_\IR$ seem to be new and are of independent interest.
Closure of non-deterministically computable functions for advice space $\Baire$ was observed
in \cite{Zie07a}.

Now we want to prove that the class of (single-valued)
functions below choice for Baire space $\C_{\Baire}$ is essentially
the class of effectively Borel measurable functions.
It is known that there is no complete Borel measurable function, since
any particular function has to be $\SO{\xi}$--measurable in the Borel
hierarchy for some countable ordinal $\xi$ (see 1G.15 in \cite{Mos80}).
Nevertheless, we will see that choice of Baire space $\C_{\Baire}$ is complete
for Borel measurable functions in a certain sense.
We will say that a function $f:X\to Y$ on computable Polish spaces $X$ and $Y$
is {\em effectively Borel measurable}, if its graph is an effective $\SI{1}$--set
(see Theorem~3E.5 in \cite{Mos80}). Here a subset $A\In X$ of a computable
Polish space $X$ is called {\em effective $\SI{1}$--set}, if there exists
a co-c.e.\ closed set $B\In X\times\Baire$ such that $x\in A\iff(\exists p\in\Baire)(x,p)\in B$.
We will use once again Theorem~\ref{thm:non-deterministic} for the proof.

\begin{theorem}[Choice of Baire space]
Let $X$ and $Y$ be computable Polish spaces and let $f:X\to Y$ be a function.
Then the following are equivalent:
\begin{enumerate}
\item $f\leqW\C_{\Baire}$,
\item $f$ is effectively Borel measurable.
\end{enumerate}
\end{theorem}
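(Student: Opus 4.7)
The plan is to reduce the equivalence via Theorem~\ref{thm:non-deterministic} to the following intermediate statement: $f$ is non-deterministically computable with advice space $\Baire$ if and only if $\graph(f)$ is an effective $\SI{1}$ subset of $X\times Y$. Both directions then become concrete translations between a pair of computable witnesses $(F_1,F_2)$ and a co-c.e.\ closed projection. A background fact I will use freely is that for a computable Polish space $Z$ with Cauchy representation $\delta_Z$, the relation $\{(p,z)\in\Baire\times Z:\delta_Z(p)=z\}$ is co-c.e.\ closed; equivalently, $\delta_Z$ has an effectively closed graph. The set $\delta_\IS^{-1}\{0\}\In\Baire$ is likewise co-c.e.\ closed.

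For $(1)\Rightarrow(2)$, suppose $f\leqW\C_\Baire$ and pick $F_1,F_2$ as provided by Theorem~\ref{thm:non-deterministic}. Define
\[B:=\{((x,y),\langle p,r\rangle)\in(X\times Y)\times\Baire:\delta_X(p)=x,\;\delta_\IS F_2\langle p,r\rangle=0,\;\delta_YF_1\langle p,r\rangle=y\}.\]
Each of the three conjuncts is effectively closed in the product representation of $(X\times Y)\times\Baire$: the first two use the co-c.e.\ closedness of the graphs of $\delta_X$ and $\delta_Y$ (composed with the computable functions $p\mapsto p$ and $\langle p,r\rangle\mapsto F_1\langle p,r\rangle$), while the third is the pullback of $\delta_\IS^{-1}\{0\}$ along the computable $F_2$. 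Hence $B$ is co-c.e.\ closed. For $x\in\dom(f)$, picking any name $p$ of $x$, condition~(1) of Definition~\ref{def:non-deterministic} yields some $r$ with $\delta_\IS F_2\langle p,r\rangle=0$, and then $\delta_YF_1\langle p,r\rangle=f(x)$ by condition~(2), so $((x,f(x)),\langle p,r\rangle)\in B$. Conversely, any $((x,y),\langle p,r\rangle)\in B$ forces $y=\delta_YF_1\langle p,r\rangle=f\delta_X(p)=f(x)$. Thus the projection of $B$ to $X\times Y$ equals $\graph(f)$.

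For $(2)\Rightarrow(1)$, suppose $B\In(X\times Y)\times\Baire$ is co-c.e.\ closed with $\graph(f)$ as its projection. I will verify condition (2) of Theorem~\ref{thm:non-deterministic} directly, by exhibiting a non-deterministic computation with advice space $\Baire$. The advice is parsed as $s=\langle q,r\rangle$, where $q\in\Baire$ is meant to name some $y\in Y$ and $r\in\Baire$ is meant to witness that $((x,y),r)\in B$. Set $F_1\langle p,\langle q,r\rangle\rangle:=q$. For $F_2$, use semi-decidability of the open complement of $B$: on input $\langle p,\langle q,r\rangle\rangle$, dovetail an enumeration of the complement of $B$ applied to the name $\langle p,q,r\rangle$ of $((x,\delta_Y(q)),r)$, writing $0$'s until (and unless) this enumeration confirms $((x,\delta_Y(q)),r)\notin B$, at which point write a $1$. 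For $x\in\dom(f)$, since $(x,f(x))\in\graph(f)$ the hypothesis provides some witness $r$, and $f(x)\in Y$ admits some name $q$; then $\langle q,r\rangle$ is a successful advice, establishing condition~(1) of Definition~\ref{def:non-deterministic}. Conversely, any advice $\langle q,r\rangle$ making $\delta_\IS F_2$ yield $0$ means $((x,\delta_Y(q)),r)\in B$, hence $\delta_Y(q)=f(x)$, so $F_1$ returns a name of $f(x)$; this establishes condition~(2). By Theorem~\ref{thm:non-deterministic} we conclude $f\leqW\C_\Baire$.

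The only real subtlety is the first step of the forward direction, where I need each conjunct defining $B$ to be effectively closed as a subset of $(X\times Y)\times\Baire$ in its product representation, rather than only in its realizer space. This reduces to the standard fact that $\graph(\delta_Z)$ is co-c.e.\ closed for computable Polish $Z$, which handles the membership relation $\delta_X(p)=x$ and $\delta_YF_1\langle p,r\rangle=y$; the remaining $\Pi^0_1$ condition on $F_2$ is immediate. No further work with Borel levels is needed, since the effective $\SI{1}$ definition folds everything into a single existential quantifier over $\Baire$.
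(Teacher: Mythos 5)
Your argument follows the same route as the paper's proof: reduce via Theorem~\ref{thm:non-deterministic} to non-deterministic computability with advice space $\Baire$, recast the matrix $\delta_X(p)=x \wedge \delta_\IS F_2\langle p,r\rangle = 0 \wedge \delta_Y F_1\langle p,r\rangle = y$ as a co-c.e.\ closed set whose projection is $\graph(f)$, and conversely parse the advice as $\langle q,r\rangle$ with $F_1$ projecting out $q$ and $F_2$ semi-deciding membership in $B^{\rm c}$. The only structural difference is cosmetic: where the paper passes to \emph{total} computably admissible representations of $X$ and $Y$ (citing \cite{Bra99bx}) so that the matrix is manifestly co-c.e.\ closed, you instead lean on the co-c.e.\ closedness of the graph of the Cauchy representation. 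Both serve the same purpose; either way one also needs the tacit step of replacing $F_2$ (and, after restricting to where the first two conjuncts hold, $F_1$) by total extensions, which both you and the paper gloss over.

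One concrete slip: your description of $F_2$ in the backward direction has the Sierpi\'nski encoding reversed relative to the paper's convention. The paper defines $\delta_\IS(p)=1 \iff (\exists n)\,p(n)=0$, so success means $F_2\langle p,\langle q,r\rangle\rangle$ \emph{never} prints a $0$, while failure is signalled by printing a $0$. Your $F_2$ writes $0$'s from the start and a $1$ upon detecting $((x,\delta_Y(q)),r)\notin B$, which under this convention makes every advice register as failure, so condition~(1) of Definition~\ref{def:non-deterministic} would never be met. Swapping the roles — write nonzero symbols while no failure is found, and print a $0$ once the enumeration of $B^{\rm c}$ confirms non-membership — makes the construction agree with the paper's $\delta_\IS F_2\langle p,\langle q,r\rangle\rangle=\chi_{A^{\rm c}}(\delta_X(p),\delta_Y(q),r)$.
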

\begin{proof}
By Theorem~\ref{thm:non-deterministic} it suffices to show
that $f$ is non-deterministically computable with advice space $\Baire$
if and only if it is effectively Borel measurable.
Since $X$ and $Y$ are Polish, we can assume that we have total computably admissible
representations $\delta_X$ and $\delta_Y$ for $X$ and $Y$, respectively
(see, for instance, Corollary~4.4.12 in \cite{Bra99bx}).

If $f$ is non-deterministically computable with advice space $\Baire$,
then there are computable functions $F_1,F_2$ according to Definition~\ref{def:non-deterministic}.
We obtain for all $(x,y)\in X\times Y$
\begin{eqnarray*}
&& f(x)=y\\
&\iff& (\exists \langle p,r\rangle\in\Baire)(\delta_X(p)=x,\delta_\IS F_2\langle p,r\rangle=0\mbox{ and }\delta_YF_1\langle p,r\rangle=y).
\end{eqnarray*}
Since all involved functions in the matrix of the formula are computable and total, it follows that the
matrix constitutes a co-c.e.\ closed subset of $X\times Y\times\Baire$ in the parameters $(x,y,\langle p,r\rangle)$.
Hence $f$ is effectively Borel measurable.

Let now $f$ be an effectively Borel measurable function.
Then $\graph(f)$ is a $\SI{1}$--set in the effective Borel hierarchy and
there exists a co-c.e.\ closed set $A\In X\times Y\times\Baire$
such that
\[f(x)=y\iff(\exists r\in\Baire)(x,y,r)\in A.\]
We devise a non-deterministic computation for $f$, by defining suitable
computable functions $F_1,F_2$ according to Definition~\ref{def:non-deterministic}.
Firstly, there exists a computable function $F_2$ with
$\delta_\IS F_2\langle p,\langle q,r\rangle\rangle=\chi_{A^{\rm c}}(\delta_X(p),\delta_Y(q),r)$
and we define $F_1\langle p,\langle q,r\rangle\rangle:=q$. Then $F_1$ is computable too
and we obtain
\begin{eqnarray*}
(\exists r\in\Baire)\;\delta_\IS F_2\langle p,\langle q,r\rangle\rangle=0
&\iff& (\exists r\in\Baire)(\delta_X(p),\delta_Y(q),r)\in A\\
&\iff& f\delta_X(p)=\delta_Y(q)
\end{eqnarray*}
and if this condition holds, then we have $\delta_YF_1\langle p,\langle q,r\rangle\rangle=\delta_Y(q)=f\delta_X(p)$.
Altogether, this shows that $F_1,F_2$ satisfy the conditions of Definition~\ref{def:non-deterministic}.
\end{proof}

We note that $\C_{\Baire}$ itself is not Borel measurable, which is not a contradiction, since
it is not a single-valued function defined on a Polish space.
In contrast, the domain of $\C_{\Baire}$ corresponds to the set of ill-founded trees
(i.e.\ trees with at least one infinite branch), which is known to be $\SI{1}$--complete
(see Theorem~27.1 in \cite{Kec95}). We mention that the relativized version of the above proof
leads to the following corollary.

\begin{corollary} Let $X$ and $Y$ be Polish spaces represented by their Cauchy representations
and let $f:X\to Y$ be a function. Then the following are equivalent:
\begin{enumerate}
\item $f\leqW\C_{\Baire}$ with respect to some oracle,
\item $f$ is Borel measurable.
\end{enumerate}
\end{corollary}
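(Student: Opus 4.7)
The plan is to obtain the statement by relativizing the preceding theorem. First I would fix an oracle $z_0 \in \Baire$ relative to which both $X$ and $Y$ become computable Polish spaces -- this is available because every Polish space carries a complete compatible metric with a countable dense sequence, and any oracle encoding this data for both $X$ and $Y$ suffices to make their Cauchy representations $z_0$-computable. The whole argument of the preceding theorem (total computably admissible representations, the computable functions $F_1, F_2$, and the decomposition of an effective $\SI{1}$-set as the projection of a co-c.e.\ closed set) then goes through verbatim with ``computable'' replaced by ``$z$-computable'' for any $z \geq_T z_0$, yielding $f \leqW \C_{\Baire}$ relative to $z$ iff $\graph(f)$ is an effective $\SI{1}$-set relative to $z$.

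Next I would match the two ``exists some oracle'' quantifiers with the classical characterization of analytic sets. A subset of $X \times Y$ is (classically) analytic iff it is effectively $\SI{1}$ relative to some oracle: the reverse direction is clear, and the forward direction holds because any analytic set is the projection of a closed subset of $X \times Y \times \Baire$, and any closed subset of a Polish space is co-c.e.\ closed relative to an oracle encoding it. Combined with the relativized theorem, this gives that $f \leqW \C_{\Baire}$ with respect to some oracle iff $\graph(f)$ is analytic.

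Finally I would invoke the classical fact that a function $f : X \to Y$ between Polish spaces is Borel measurable iff $\graph(f)$ is analytic. The reverse direction is immediate. For the forward direction, given analytic $\graph(f)$, one observes that for every open $V \In Y$ both $f^{-1}(V)$ and its complement $f^{-1}(Y \sm V)$ are projections of analytic sets, hence analytic; by Suslin's theorem both are therefore Borel, so $f$ is Borel measurable.

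The main obstacle, such as it is, amounts to careful bookkeeping of oracles: one must ensure that a single oracle can simultaneously subsume the oracle witnessing the Cauchy structure of $X$ and $Y$, the oracle encoding a closed set whose projection is $\graph(f)$, and the oracle of the Weihrauch reduction. This is purely technical and uses only the closure of Turing reducibility under joins; no new mathematical idea beyond those already in the preceding theorem is needed.
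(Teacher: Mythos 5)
Your proof is correct and follows the same relativization strategy the paper intends---the paper itself only remarks that the corollary is the relativized version of the preceding theorem, leaving the details implicit. You correctly fill in the two pieces that remark glosses over: that ``effective $\SI{1}$ relative to some oracle'' coincides with classically analytic, and that a function between Polish spaces is Borel measurable iff its graph is analytic (via Suslin's theorem applied to $f^{-1}(V)$ and its complement).
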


Here, reducibility ``with respect to some oracle'' is equivalent to using the continuous version of Weihrauch reducibility.
Now we will consider another model of hypercomputation, namely finitely revising computation
as considered in \cite{Zie07} and as known as computation with finitely many mind changes
in learning theory \cite{BY09}.
A Turing machine that computes with finitely many mind changes or that is finitely revising
can erase its output tape at any stage during its computation and start writing anew, however,
this can be done only finitely often, ensuring that the output is well-defined.
In \cite{Zie07}, the power of finite revising was characterized in terms of an operator
mapping one representation into another. We will define this concept here using the discrete limit
$\lim_\Delta:\In\Baire\to\Baire,\langle p_0,p_1,...\rangle\mapsto\lim_{i\to\infty}p_i$
where the $\Delta$ stands for the discrete topology on $\Baire$ and the limit on the right-hand side
is taken with respect to this topology. That is a sequence $(p_i)_{i\in\IN}$
converges with respect to $\Delta$ if and only if it is eventually constant.
Now we use the discrete limit to define a discrete version of the jump of a representation
(as equivalently considered in \cite{Zie07}).

\begin{definition}[Discrete jump]
Let $(X,\delta)$ be a represented space. Then we define the {\em discrete jump}
of $\delta$ by $\delta^\Delta:=\delta\circ\lim_\Delta$.
\end{definition}

It is easy to see that the following result holds (cf.\ Lemma~3.7 in \cite{Zie07}).

\begin{proposition}[Computability with finitely many mind changes]
\label{prop:mind-changes}
Let $(X,\delta_X)$ and $(Y,\delta_Y)$ be represented spaces and let $f:\In X\mto Y$
be a multi-valued function. Then the following are equivalent:
\begin{enumerate}
\item $f$ is $(\delta_X,\delta_Y)$--computable with finitely many mind changes,
\item $f$ is $(\delta_X,\delta_Y^\Delta)$--computable,
\item $f$ is $(\delta_X^\Delta,\delta_Y^\Delta)$--computable.
\end{enumerate}
\end{proposition}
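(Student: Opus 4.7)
The plan is to prove the two less-immediate equivalences $(1)\Leftrightarrow(2)$ and $(2)\Leftrightarrow(3)$ separately, since $(3)$ trivially implies $(2)$ (any $\delta_X$-name is a $\delta_X^\Delta$-name after prepending a constant sequence).

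For $(1)\Leftrightarrow(2)$, I would essentially unpack the definitions. A Turing machine that revises its output at most finitely many times can, without loss of generality, be taken to output at each step $n$ a complete candidate $\delta_Y$-name $p_n$ (using a fixed default extension of the current output-tape contents), where $(p_n)_{n\in\IN}$ is eventually constant at a valid $\delta_Y$-name of some $y\in f\delta_X(p)$; this sequence $\langle p_0,p_1,\ldots\rangle$ is then literally a $\delta_Y^\Delta$-name. Conversely, given a computable $(\delta_X,\delta_Y^\Delta)$-realizer $F$, the associated mind-change machine keeps the tape filled with the currently-believed $p_n$, erasing and rewriting exactly when $p_{n+1}\neq p_n$; eventual constancy guarantees only finitely many erasures.

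The direction $(3)\Rightarrow(2)$ is immediate: the map $\iota\colon\Baire\to\Baire$, $p\mapsto\langle p,p,p,\ldots\rangle$, is computable and realizes the identity with respect to $(\delta_X,\delta_X^\Delta)$, so any $(\delta_X^\Delta,\delta_Y^\Delta)$-realizer $F$ yields the $(\delta_X,\delta_Y^\Delta)$-realizer $F\circ\iota$.

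The non-trivial direction is $(2)\Rightarrow(3)$. The underlying fact is the idempotency $\delta_X^{\Delta\Delta}\equiv\delta_X^\Delta$, i.e., that a doubly $\Delta$-convergent sequence of $\delta_X$-names can be computably collapsed to a single $\Delta$-convergent sequence with the same limit. Granting this, given a $(\delta_X,\delta_Y^\Delta)$-realizer $F$ and a $\delta_X^\Delta$-input $\langle q_0,q_1,\ldots\rangle$ stabilizing at $q$, I apply $F$ to each $q_i$ in parallel, obtaining a double sequence $F(q_i)=\langle r^i_0,r^i_1,\ldots\rangle$. For $i$ beyond the stabilization point, this row equals $F(q)$ and is eventually constant at a valid $\delta_Y$-name of some $y\in f\delta_X(q)$. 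The main obstacle is that neither the input stabilization time nor the output stabilization time can be detected computably. But we do not need to detect them: we simply feed the double sequence into the computable collapse. Concretely, at stage $n$ I output the name $r^{i_n}_{j_n}$ for some computable schedule $(i_n,j_n)$ tracking the "latest" row and column (e.g.\ increment $j_n$ until a change in $r^{i_n}_{\cdot}$ is seen, and increment $i_n$ whenever a change in $q_{i_n}$ is seen, resetting $j_n$). Since the outer sequence changes only finitely often, and after its last change the inner sequence changes only finitely often, the combined output changes only finitely often and is eventually equal to the stable value of $F(q)$, yielding the required $\delta_Y^\Delta$-name.
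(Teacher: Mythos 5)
The paper itself offers no proof of this proposition (it just remarks ``it is easy to see'' and cites Lemma~3.7 of \cite{Zie07}), so I review your argument on its own merits. Your decomposition $(1)\Leftrightarrow(2)$, $(3)\Rightarrow(2)$, $(2)\Rightarrow(3)$ is natural, and $(3)\Rightarrow(2)$ via $p\mapsto\langle p,p,\ldots\rangle$ is correct. But the $(1)\Rightarrow(2)$ step contains a concrete error. You set $p_n$ to be the current output-tape contents at step $n$ padded by a fixed default, and assert that $(p_n)_{n\in\IN}$ is eventually constant. It is not: after the last mind change the output tape \emph{keeps growing forever} (it must, since the final $\delta_Y$-name is an infinite sequence produced one symbol at a time), so the default-padded strings keep changing at every step. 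What you get is a sequence that converges pointwise in $\Baire$ --- a valid $\lim$-name --- but not an eventually constant one, hence not a $\lim_\Delta$-name. The fix is a stage shift: define, say, $p_n(k)$ to be the $k$th output-tape symbol at the first stage $t>n$ at which the tape holds at least $k+1$ symbols. Once $n$ exceeds the stage of the last mind change, every such sampling falls inside the final attempt, so $p_n$ equals the final output and $(p_n)$ is eventually constant; the finitely many earlier $p_n$ are harmless garbage.

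For $(2)\Rightarrow(3)$ the obstacle is slightly more serious than your text acknowledges: for $i$ below the input's stabilization index, $q_i$ need not lie in $\dom(F)$, so $F(q_i)$ may be a partial or finite object and the ``double sequence'' is not a genuine element of $\Baire$ at all, let alone a $\delta_Y^{\Delta\Delta}$-name. Idempotency of $\Delta$ therefore cannot be invoked directly; your nested schedule is the right intuition but you must also say what to write on the output tape while the current $F(q_{i_n})$ is stalled (pad with defaults; since only finitely many rows $s_n$ are affected before the indices stabilize, eventual constancy survives). Once $(1)\Leftrightarrow(2)$ is repaired, a cleaner route for $(2)\Rightarrow(3)$ avoids the collapse entirely: by $(2)\Rightarrow(1)$ take a mind-change machine $M$ for $f$; on a $\delta_X^\Delta$-input $\langle q_0,q_1,\ldots\rangle$, run $M$ on $q_i$, and whenever a disagreement between $q_i$ and some later $q_m$ is detected, reset $i:=m$, erase, and restart $M$. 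There are only finitely many resets, each aborted run of $M$ contributes finitely many mind changes, and the surviving run is $M$ on the true limit name $q$. This shows $f$ is $(\delta_X^\Delta,\delta_Y)$-computable with finitely many mind changes, and applying $(1)\Rightarrow(2)$ with $\delta_X^\Delta$ in the role of $\delta_X$ yields $(3)$.
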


With this proposition we can produce the following characterization of the
discrete limit and the power of computations with finitely many mind changes in
terms of closed choice, showing that finite revision allows exactly to perform
closed choice in $\mathbb{N}$.

\begin{theorem}[Choice on natural numbers]
\label{thm:choice-natural}
Let $f$ be a multi-valued function on represented spaces.
Then the following are equivalent:
\begin{enumerate}
\item $f \leqW \C_\mathbb{N}$,
\item $f\leqW \lim_\Delta$,
\item $f$ is computable with finitely many mind changes.
\end{enumerate}
\end{theorem}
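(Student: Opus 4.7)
The plan is to prove $\C_\IN \equivW \lim_\Delta$ directly, and then obtain the equivalence of (2) and (3) via Proposition~\ref{prop:mind-changes}. I would split the argument into three pieces.

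For $\C_\IN \leqW \lim_\Delta$, given a negative description of a non-empty set $A \In \IN$ as an enumeration $n_0, n_1, \ldots$ of $\IN \setminus A$, I form the computable sequence $q_i := \min(\IN \setminus \{n_0, \ldots, n_{i-1}\})$. Since $A$ is non-empty, setting $a := \min A$, eventually all integers below $a$ appear among the $n_j$, so $(q_i)$ stabilizes at $a \in A$. Hence $a = \lim_\Delta \langle q_0, q_1, \ldots\rangle$ is a valid output of $\C_\IN$ on $A$, giving the reduction.

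For $\lim_\Delta \leqW \C_\IN$, given $\langle p_0, p_1, p_2, \ldots\rangle \in \dom(\lim_\Delta)$, I enumerate the set $B \In \IN$ of ``unstable'' indices, namely those $n$ for which $p_n \neq p_m$ for some $m > n$. Since the input sequence is eventually constant, $\IN \setminus B$ is a non-empty co-c.e.\ closed subset of $\IN$, containing every sufficiently large index. Applying $\C_\IN$ to $\IN \setminus B$ produces some stable index $n$, from which $p_n$ (the desired limit) is recovered computably from the original input. Combined with the previous step this yields $\C_\IN \equivW \lim_\Delta$, hence the equivalence of (1) and (2).

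For the equivalence of (2) and (3), Proposition~\ref{prop:mind-changes} asserts that $f$ is computable with finitely many mind changes iff it is $(\delta_X, \delta_Y^\Delta)$-computable, i.e.\ iff there exists a computable $F$ with $\delta_Y \circ \lim_\Delta \circ F \vdash f$. This is precisely a strong Weihrauch reduction $f \leqSW \lim_\Delta$, establishing the direction from (3) to (2). Conversely, if $f \leqW \lim_\Delta$ via computable $K, H$, then $K \circ \langle \id, \lim_\Delta \circ H\rangle$ realizes $f$; since $\lim_\Delta$ is trivially computable with finitely many mind changes (write the current component of the input stream, erase and restart whenever the stream next changes), and composing an incremental computable map such as $K$ with a finitely-revising stream yields again a finitely-revising computation, we recover (3).

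The main delicate step is verifying that composition preserves the finitely-many-mind-changes property in the converse half of the (2)--(3) equivalence: one must argue that each mind change of the intermediate output forces at most one restart of the ongoing computation of $K$, so that the total number of revisions stays bounded by the finite number of revisions of $\lim_\Delta \circ H$.
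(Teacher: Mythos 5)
Your proof is correct, but the decomposition differs from the paper's. You prove (1)$\leftrightarrow$(2) directly by establishing $\C_\IN\equivW\lim_\Delta$: for $\C_\IN\leqW\lim_\Delta$ you track the monotone sequence of current minima not yet excluded, and for $\lim_\Delta\leqW\C_\IN$ you enumerate the unstable indices and apply $\C_\IN$ to the complement (essentially the same stable-index argument the paper uses for its (2)$\Rightarrow$(1) step); you then derive (2)$\leftrightarrow$(3) from Proposition~\ref{prop:mind-changes}. The paper instead closes a cycle (1)$\Rightarrow$(3)$\Rightarrow$(2)$\Rightarrow$(1): it shows directly that $\C_\IN$ is computable with finitely many mind changes and then invokes the fact that this class is closed downward under Weihrauch reducibility (citing Lemma~4.4 of \cite{BG09b}) to get (1)$\Rightarrow$(3), so the reduction $\C_\IN\leqW\lim_\Delta$ never needs to be exhibited explicitly -- it falls out of the cycle. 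Symmetrically, you never explicitly need that $\C_\IN$ itself is finitely revising. The one thing to flag: the ``delicate step'' you correctly identify in your (2)$\Rightarrow$(3) direction -- that composing an incremental computable $K$ with a finitely-revising intermediate stream again yields a finitely-revising computation -- is precisely the downward-closure lemma the paper cites; it is true (each erase of the intermediate tape triggers at most one restart of $K$, so the number of revisions of the final output is bounded by that of $\lim_\Delta\circ H$), but in a write-up you should either invoke Lemma~4.4 of \cite{BG09b} or spell out that bookkeeping in full rather than leave it as a sketch, since it is carrying real weight in your argument.
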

\begin{proof}
It is easy to see that $\C_\IN$ is computable with finitely many mind changes.
Starting with $n = 0$, the machine outputs a $\delta_\mathbb{N}$ name for $n$ and searches
for $n$ in the input at the same time. If the search is successful, the output is erased, $n$
is increased by $1$, and the machine starts again. A valid input never causes the machine to
erase its output tape infinitely often, and an output can only avoid erasion, if it is a valid result
for $\C_\mathbb{N}$.
Moreover, being computable with finitely many mind changes is preserved downwards
by Weihrauch reducibility (see Lemma~4.4 in \cite{BG09b}) and hence $f\leqW\C_\IN$
implies that $f$ is computable with finitely many mind changes.
Hence (1) implies (3).

Now we assume that $f$ is of type $f:\In X\mto Y$ for represented spaces $(X,\delta_X)$
and $(Y,\delta_Y)$. If $f$ is computable with finitely many mind changes, then
$f$ has a computable $(\delta_X,\delta_Y^\Delta)$--realizer $F$ by
Proposition~\ref{prop:mind-changes},
which means $\delta_Y\circ\lim_\Delta F(p)\in f\delta_X(p)$ for all $p\in\dom(\delta_X)$.
Hence $f\leqW\lim_\Delta$ and (3) implies (2).

In order to prove that (2) implies (1) it suffices to shows $\lim_\Delta\leqW\C_\IN$.
we describe a machine computing a function
$G$ in the following: The input for $G$ is a sequence $(p_n)_{n \in \mathbb{N}}$ with
$p_n \in\Baire$. Now we start to test simultaneously $p_n = p_{n + j}$ for each
$n, j \in \mathbb{N}$. If a contradiction is found, we print $n$ on the output tape.
If $\C_\mathbb{N}$ is applied to the output of $G$, the answer is an index $n_0$,
so that the initial sequence is constant after $n_0$. The remaining task is to output the $n_0$th entry of the sequence.
\end{proof}

We get the following corollary that shows that the discrete limit is equivalent to choice on natural numbers.

\begin{corollary}
\label{cor:discrete-limit}
$\lim_\Delta \equivW\C_\mathbb{N}$.
\end{corollary}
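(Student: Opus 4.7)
The plan is to obtain this corollary as a direct consequence of Theorem~\ref{thm:choice-natural}, by simply applying the equivalence of conditions (1) and (2) to two specific functions.

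First, I would apply the implication (1)$\TO$(2) of Theorem~\ref{thm:choice-natural} with $f := \C_\IN$. Since $\C_\IN \leqW \C_\IN$ holds trivially by reflexivity of $\leqW$, condition (1) is satisfied, and hence we obtain $\C_\IN \leqW \lim_\Delta$, giving one direction of the equivalence.

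Next, I would apply the implication (2)$\TO$(1) of Theorem~\ref{thm:choice-natural} with $f := \lim_\Delta$. Again $\lim_\Delta \leqW \lim_\Delta$ holds trivially, so condition (2) is satisfied, and we obtain $\lim_\Delta \leqW \C_\IN$, which is the reverse direction. Combining both reductions yields $\lim_\Delta \equivW \C_\IN$, and the corollary is established.

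There is no real obstacle here: all of the work has already been done inside the proof of Theorem~\ref{thm:choice-natural} (in particular the explicit reduction $\lim_\Delta \leqW \C_\IN$ built via pairwise comparisons $p_n = p_{n+j}$, and the observation that $\C_\IN$ is computable with finitely many mind changes). The corollary is simply a convenient repackaging of the fixed points of that equivalence.
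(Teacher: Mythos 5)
Your proposal is correct and matches the paper's intent exactly: the corollary is stated without proof immediately after Theorem~\ref{thm:choice-natural}, and the intended argument is precisely the one you give, instantiating the equivalence of (1) and (2) once with $f := \C_\IN$ and once with $f := \lim_\Delta$.
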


We close by mentioning that the class of operations characterized by choice on
Cantor space is also of independent interest. These functions have been called
{\em weakly computable} in \cite{BG09a} and basically the equivalence of
(1) and (3) below is the definition. With Theorem~\ref{thm:non-deterministic}
we get a characterization of weakly computable functions as non-deterministically computable
ones with advice space $\Cantor$.

\begin{corollary}[Choice on Cantor space]
\label{cor:choice-cantor}
Let $f$ be a multi-valued function on represented spaces.
Then the following are equivalent:
\begin{enumerate}
\item $f \leqW \C_\Cantor$,
\item $f$ is non-deterministically computable with advice space $\Cantor$,
\item $f$ is weakly computable.
\end{enumerate}
\end{corollary}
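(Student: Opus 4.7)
The plan is to derive this corollary as an immediate specialization of already-established results, so essentially no new technical work is required. First, the equivalence of (1) and (3) is by definition: as noted in the paragraph preceding the statement, a multi-valued function on represented spaces is \emph{weakly computable} in the sense of \cite{BG09a} precisely when $f \leqW \C_\Cantor$, so (1) $\iff$ (3) is just the definition unpacked.

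Second, to obtain (1) $\iff$ (2), I would instantiate Theorem~\ref{thm:non-deterministic} with advice space $A := \Cantor$. That theorem states that for any $A \In \Baire$ and any multi-valued $f$ on represented spaces, $f \leqW \C_A$ is equivalent to $f$ being non-deterministically computable with advice space $A$. Since $\Cantor = \{0,1\}^\IN$ is canonically a subspace of Baire space via the inclusion $\Cantor \into \Baire$, the hypothesis $A \In \Baire$ is satisfied without any extra coding, and the instantiation is direct.

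Combining the two equivalences yields the corollary. The one point worth highlighting (but not really an obstacle) is that the non-deterministic characterization in (2) adds genuinely new content from a computational-model perspective compared to (3): the set of successful advices $A_p$ forms a co-c.e.\ closed subset of Cantor space rather than of all of Baire space, and it is exactly this restriction of the advice space that distinguishes weak computability from the full non-deterministic model of \cite{Zie07a}. There is no real technical obstacle here; the statement is a corollary in the strict sense of being a logical consequence of Theorem~\ref{thm:non-deterministic} together with a definitional unfolding.
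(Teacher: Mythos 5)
Your proof is correct and is exactly the paper's argument: the paper itself states that the equivalence of (1) and (3) is the definition of weak computability, and that (1) $\iff$ (2) follows by applying Theorem~\ref{thm:non-deterministic} with $A = \Cantor$. No differences in approach.
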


A surprising omission in our list of classes of computable functions characterized
by closed choice of some space is the class of limit computable functions.
In light of Corollary~\ref{cor:composition} it seems that choice for most
natural spaces will correspond to classes of functions that are closed under
composition, whereas the class of limit computable functions is not closed
under composition (see for instance \cite{Bra05}). Thus, the  following conjecture is plausible.

\begin{conjecture}
There is no represented space $(X,\delta)$ such that $\C_X\equivW\lim$.
\end{conjecture}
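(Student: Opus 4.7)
The plan is to derive a contradiction from the assumption $\C_X\equivW\lim$ by showing that $\C_X$ must then be closed under composition, contradicting the non-closure of $\lim$-computability under composition referenced by the authors via \cite{Bra05}. First I would observe that since $\widehat{\lim}\equivW\lim$, any $\C_X\equivW\lim$ is itself parallelized: $\widehat{\C_X}\equivW\widehat{\lim}\equivW\lim\equivW\C_X$, so in particular $\C_X$ is idempotent. This parallelization is the only structural handle we get for free from the assumption and will have to carry the weight of the argument.

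Next I would reduce the problem to closed choice on a subspace of Baire space in order to invoke the Independent Choice Theorem~\ref{thm:independent}. Setting $A:=\dom(\delta_X)\In\Baire$ as a subspace of Baire space, the map sending a closed subset $C\In X$ (given by a $\psi_-^X$-name) to the closed subset $\delta_X^{-1}(C)\In A$ is computable, which yields $\C_X\leqW\C_A$ — any point selected from $\delta_X^{-1}(C)$ in $A$ is automatically a $\delta_X$-name of a point of $C$. Under the hypothesis $\C_X\equivW\lim$ we then have $\lim\leqW\C_A$ as well, and by Theorem~\ref{thm:independent} any composition $f\circ g$ with $f,g\leqW\C_A$ satisfies $f\circ g\leqW\C_{A\times A}$.

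The goal would then be to establish $\C_{A\times A}\leqW\C_A$, from which Corollary~\ref{cor:composition} yields that $\C_A$ is closed under composition. Since closure under composition is preserved by Weihrauch equivalence, $\lim\equivW\C_X\leqW\C_A$ would inherit this closure property when restricted to functions below $\lim$, contradicting \cite{Bra05} and completing the argument.

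The main obstacle is the last reduction $\C_{A\times A}\leqW\C_A$: for an arbitrary represented space $X$, there is no a priori computable surjection $A\to A^2$, so Corollary~\ref{cor:composition} is not immediately applicable. The hope would be to use the parallelization property $\widehat{\C_X}\equivW\C_X$ secured in the first step to force enough structural richness on an appropriate representative of $X$ that such a surjection does exist; intuitively, if $\C_X$ encodes countably many independent applications of $\C_\IN$ as $\lim$ does, then $A$ ought to be large enough to absorb its own square. Making this precise — perhaps via Proposition~\ref{prop:embedding} combined with a topological classification of $A$, or alternatively by extending Theorem~\ref{thm:independent} directly to arbitrary represented spaces and then relating $\C_{X\times X}$ back to $\C_X$ under the parallelization hypothesis — is the delicate step that this plan leaves open, consistent with the statement being only conjectured.
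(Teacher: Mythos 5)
Note first that this statement is presented in the paper as a \emph{conjecture}, not as a proved theorem, so there is no ``paper proof'' against which to compare your attempt: the authors do not claim to settle it. What they \emph{do} offer is (i) the heuristic you pursue --- that closed choice for natural spaces tends to be closed under composition while $\lim$-computability is not --- and (ii) a partial result stated immediately after the conjecture: for Polish spaces $(X,\delta)$ the conjecture holds \emph{topologically}, because by the Dichotomy Corollary~\ref{cor:dichotomy} either $\C_X\leqW\C_\IR$ or $\C_X\equivW\C_\Baire$ relative to some oracle, and neither degree is that of $\lim$. Your proposal does not mention this, and it is the only concrete evidence the paper records.

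The individual steps you do carry out are correct. The reduction $\C_X\leqW\C_A$ for $A:=\dom(\delta_X)$ is an instance of Proposition~\ref{prop:surjection} with $s=\delta_X$, and the observation $\widehat{\C_X}\equivW\C_X$ (hence idempotency) follows from $\widehat{\lim}\equivW\lim$ exactly as you say.

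However, beyond the gap you flag yourself (producing a computable surjection $A\to A^2$), the final deduction has a logical flaw. You only obtain the one-directional reduction $\lim\equivW\C_X\leqW\C_A$. Even if you secured closure under composition for $\C_A$, nothing brings the composition back below $\C_X$: for $f,g\leqW\C_X$, the Independent Choice Theorem~\ref{thm:independent} gives $f\circ g\leqW\C_{A\times A}\leqW\C_A$, but this does not yield $f\circ g\leqW\C_X$ unless one also has $\C_{A\times A}\leqW\C_X$, or at least $\C_A\leqW\C_X$. Neither is available: since $\delta_X$ need not be injective, $\C_A$ can sit strictly above $\C_X$ (if $\delta_X$ collapses sufficiently, $\C_A$ can be as large as $\C_\Baire$ while $\C_X$ is trivial). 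So the phrase ``$\lim$ would inherit this closure property when restricted to functions below $\lim$'' is precisely the step that does not follow. What would actually be needed is $\C_{A\times A}\leqW\C_X$, a strictly harder target than the surjection you already identify as missing. Finally, the idempotency you extract from parallelization is genuine but strictly weaker than closure under composition --- the paper proves only the converse implication (Proposition~\ref{prop:composition-idempotency}) --- so it cannot carry the argument on its own. In short, your plan is a reasonable elaboration of the authors' stated intuition, but both gaps would need to be closed, and given that the paper leaves the statement open, you should expect them to be genuinely hard.
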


At least for Polish spaces $(X,\delta)$ this conjecture follows topologically from Corollary~\ref{cor:dichotomy}.
The closest we can get to a characterization of limit computable functions by
a choice principle of a Polish space is expressed in the following result.

\begin{corollary}[Parallelized choice on natural numbers]
\label{cor:parallel-choice-natural}
Let $f$ be a\linebreak multi-valued function on represented spaces.
Then the following are equivalent:
\begin{enumerate}
\item $f \leqW\widehat{\C_\IN}$,
\item $f$ is limit computable.
\end{enumerate}
\end{corollary}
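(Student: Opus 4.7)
The plan is to leverage Example~\ref{ex:parallelization}, which already supplies the equivalence $\widehat{\C_\IN}\equivW\lim$. Once this is in hand, the corollary reduces to the assertion that $f\leqW\lim$ holds if and only if $f$ is limit computable. This is in the same spirit as the characterizations we have already established for $\C_\IN$ (Theorem~\ref{thm:choice-natural}) and $\C_\Cantor$ (Corollary~\ref{cor:choice-cantor}), and the proof follows the same pattern: unfold the Weihrauch reduction and recognize the resulting object as a limit-computable realizer, and conversely.

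First I would handle the direction ``$f$ limit computable $\Rightarrow f\leqW\widehat{\C_\IN}$''. By definition, $f:\In X\mto Y$ admits a realizer $F$ that is the pointwise limit of a computable sequence $(F_n)_{n\in\IN}$ of computable functions on Baire space. I would set $H(p):=\langle F_0(p),F_1(p),F_2(p),\ldots\rangle$, which is computable, and let $K$ be the projection onto the second component of a pair. Then $K\langle\id,\lim\circ H\rangle(p)=\lim_{n\to\infty}F_n(p)=F(p)$ realizes $f$, so $f\leqW\lim\equivW\widehat{\C_\IN}$.

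For the converse ``$f\leqW\widehat{\C_\IN}\Rightarrow f$ limit computable'', I would start from computable $H,K$ with $K\langle\id,\lim H\rangle\vdash f$ on all inputs $p\in\dom(f\delta_X)$. For each $n$, let $q_n$ denote the $n$-th component of $H(p)$, so that by hypothesis the sequence $(q_n)$ converges to some $q^*\in\Baire$ and $K\langle p,q^*\rangle$ is a name of an element of $f\delta_X(p)$. Define the approximation $F_n(p):=K\langle p,q_n\rangle$. Each $F_n$ is uniformly computable, and by continuity of $K$ the sequence $(F_n(p))_{n\in\IN}$ converges to $K\langle p,q^*\rangle$, which realizes $f$. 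Hence $f$ has a realizer that is the pointwise limit of a computable sequence of computable functions, i.e.\ $f$ is limit computable.

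The only place that requires a modicum of care is the continuity argument in the last step: one needs to know that for each finite prefix of the desired output $K\langle p,q^*\rangle$ the function $K$ only queries finitely many symbols of $q^*$, and that these are eventually correctly predicted by the $q_n$. This is a routine application of the fact that computable functions on Baire space are continuous, and so I do not anticipate any genuine obstacle; the body of the argument is essentially the same bookkeeping that underlies Example~\ref{ex:parallelization}.
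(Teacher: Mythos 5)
Your strategy mirrors the paper's: both pass through the equivalence $\widehat{\C_\IN}\equivW\lim$ from Example~\ref{ex:parallelization} and then invoke the characterization of the cone below $\lim$ as exactly the limit computable functions. The paper simply cites \cite{Bra05} for that second step; you reprove it, which is a perfectly reasonable expansion, and your forward direction is fine.

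There is one step in your converse direction that does not hold literally as stated: you set $F_n(p):=K\langle p,q_n\rangle$ and assert each $F_n$ is (uniformly) computable. But $K$ is only a \emph{partial} computable function, and the reduction only guarantees that $\langle p,q^*\rangle\in\dom(K)$ for $p\in\dom(f\delta_X)$; nothing ensures the approximate inputs $\langle p,q_n\rangle$ lie in $\dom(K)$, so $F_n(p)$ need not even be defined. Thus the ``modicum of care'' you flag is not merely a continuity bookkeeping issue but a partiality issue. The standard repair is to let $F_n(p)$ be the result of running the machine for $K$ on $\langle p,q_n\rangle$ for $n$ steps and padding the finite output word $w$ produced so far to $w\widehat{0}$. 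The resulting $F_n$ are total and uniformly computable, and since $K$ is continuous at $\langle p,q^*\rangle$ and $q_n\to q^*$, every coordinate of $F_n(p)$ eventually stabilizes on the corresponding coordinate of $K\langle p,q^*\rangle$. With this patch your argument is complete.
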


This follows from $\widehat{\C_\IN}\equiv\lim$ (see Example~\ref{ex:parallelization}) and the
fact that $\lim$ is complete for limit computable functions, see for instance \cite{Bra05}.

\section{A Uniform Low Basis Theorem}

The choice of Cantor space $\C_\Cantor$ is known to be even not non-uniformly computable,
since there is a co-c.e.\ closed set $A\In\Cantor$ that has no computable points
(this can be seen, for instance, using the Kleene tree \cite{Kle52} or Proposition~V.5.25
in \cite{Odi89}).
However, by the Low Basis Theorem of Jockusch and Soare (see Theorem~2.1 in \cite{JS72}
or Proposition~V.5.27 in \cite{Odi89}) any co-c.e.\ closed set $A\In\Cantor$ has a low point, that
is for computable $w$, the set $\psi_-^\Cantor(w)$ always contains a low point.
As shown in \cite{BG09b}, this carries over to all problems below $\C_\mathbb{R}$:
For every computable instance, there is a solution that is low.
We will demonstrate that this result even holds uniformly,
after some necessary definitions have been introduced.

\begin{definition}[Turing jump operator]
Let $(U_n)_{n \in \mathbb{N}}$ be a standard enumeration of the c.e.\ open subsets of
Baire space $\Baire$. Define the \emph{jump operator} $J: \Baire \to \Baire$ by:
$$J(p)(n) = \begin{mycases} 1 & \mbox{if }p \in U_n \\ 0 & \textnormal{otherwise}\end{mycases}$$
\end{definition}

Contrary to its behavior on Turing degrees, as a function on Baire space, the jump is injective.
It even admits a computable inverse $J^{-1}$.
In \cite{Bra07x}, for any representation $\delta$ of some set $X$, a representation $\int \delta$
is defined by $(\int \delta)(p) = \delta(J^{-1}(p))$. Together with the operator $'$ studied in
\cite{Zie07}, where a representation $\delta'$ is defined by $\delta'(p) = \delta(\lim p)$, $\int$
forms a Galois connection, as shown in \cite{Bra07x}.
We define the {\em low representation} $\delta^\vee:=(\int\delta)'$ for any represented space $(X,\delta)$
and if $f:\In X\mto Y$ is a multi-valued map on represented spaces $(X,\delta_X)$ and $(Y,\delta_Y)$,
then $f$ is called {\em low computable}, if $f$ is $(\delta_X,\delta_Y^\vee)$--computable.
In particular,  we will be interested in the low representation $\delta_\Cantor^\vee=\id_\Cantor\circ J^{-1}\circ\lim$ of Cantor space
and the low representation of Baire space $\delta_\Baire^\vee=J^{-1}\circ\lim$, which we also denote by $\Low$.

\begin{lemma}[Low points]
A sequence $p \in \Baire$ is low if and only if it has a computable $\Low$--name.
\end{lemma}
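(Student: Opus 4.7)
The plan is to unfold the definition of the low representation $\Low=J^{-1}\circ\lim$ and match it against the Limit Lemma characterization of lowness. If $q$ is a computable name with $\Low(q)=p$, this expands to $J^{-1}(\lim q)=p$, i.e.\ $\lim q=J(p)$, so $J(p)$ arises as the (ordinary, non-discrete) limit of a computable sequence on Baire space. Conversely, a computable $\Low$-name for $p$ is precisely a computable $q$ with $\lim q=J(p)$.

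Thus the lemma reduces to the assertion: $J(p)$ is the limit of a computable sequence if and only if $p$ is low. The first step is to invoke the Shoenfield Limit Lemma (in its Baire space form): for $x\in\Baire$, $x=\lim q$ for some computable $q$ if and only if $x\leqT\emptyset'$. Applied to $x=J(p)$, a computable $\Low$-name for $p$ exists iff $J(p)\leqT\emptyset'=J(\emptyset)$.

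The second step is to note that, by the definition of $J$ via the standard enumeration of c.e.\ open sets, $J(p)$ is Turing equivalent to the Turing jump $p'$ of $p$ (it is literally its characteristic function under a canonical enumeration), and $J(\emptyset)\equivT\emptyset'$. Therefore $J(p)\leqT\emptyset'$ is equivalent to $p'\leqT\emptyset'$, i.e.\ to $p$ being low (since $\emptyset'\leqT p'$ holds automatically). Chaining the equivalences gives the claim.

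The only genuine subtlety is making sure the $J$ here agrees, up to computable equivalence, with the classical Turing jump used in the definition of ``low'', so that the Limit Lemma applies on the nose; this is immediate from the definition of $J$ via the standard enumeration $(U_n)_{n\in\IN}$ of c.e.\ open sets, under which $J(p)(n)=1$ codes exactly membership of $p$ in the $n$th c.e.\ open set, and this is computably equivalent to $p'$ in the usual sense. No deep computation is needed; the proof is essentially a definitional unfolding combined with a single invocation of Shoenfield's Limit Lemma.
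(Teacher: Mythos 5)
Your proof is correct and takes essentially the same route as the paper: unfold $\Low=J^{-1}\circ\lim$ to reduce the claim to ``$J(p)$ is a limit of a computable sequence iff $p$ is low,'' then dispatch this with Shoenfield's Limit Lemma. The paper phrases the Limit Lemma via $\dO{2}$-membership rather than $\leqT\emptyset'$, and leaves the identification of $J(p)$ with the classical jump $p'$ implicit, but these are cosmetic differences.
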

\begin{proof}
By definition, a sequence $p\in\Baire$ is called low, if its Turing jump is Turing reducible
to the halting problem, which is equivalent to $J(p)$ being in the class $\dO{2}$ of the arithmetical hierarchy (see
Proposition~IV.1.16 in \cite{Odi89}).
By Shoenfield's Limit Lemma (see Proposition~IV.1.17 in \cite{Odi89}),
$J(p)\in\dO{2}$ if any only if there exists a computable sequence $q=\langle q_0,q_1,...\rangle\in\Baire$
such that $J(p)=\lim_{i\to\infty}q_i=\lim(q)$, i.e.\ if and only if $p=\Low(q)$.
\end{proof}

Analogously, $p\in\Cantor$ is low if and only if it has a computable $\delta_\Cantor^\vee$--name.
Now we can formulate and prove our uniform low basis theorem, which states that, given an enumeration
of the complement of a non-empty compact subset $A$ of $\Cantor$, we can compute a sequence converging
to the jump of a point $p \in A$.

\begin{theorem}[Uniform Low Basis Theorem]
\label{uniformlowbasistheorem}
$\C_\Cantor$ is low computable.
\end{theorem}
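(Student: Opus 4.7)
The plan is to carry out the classical Jockusch--Soare Low Basis construction in a stage-wise fashion, producing at each stage $s$ a finite-precision approximation $q_s\in\Cantor$ of $J(p)$ (where $p\in A$ is the point being constructed) and outputting the sequence $\langle q_0,q_1,q_2,\ldots\rangle$. By definition of $\delta_\Cantor^\vee = J^{-1}\circ\lim$, it suffices that this sequence converges pointwise to $J(p)$ in $\Baire$.

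First, recall the classical construction. Given the $\psi_-^\Cantor$-name of a non-empty co-c.e.\ closed $A\subseteq\Cantor$, define recursively a nested sequence $A=T_0\supseteq T_1\supseteq\ldots$ of non-empty co-c.e.\ closed subsets of $\Cantor$ and bits $b(n)\in\{0,1\}$: if $T_n\setminus U_n\neq\emptyset$, set $T_{n+1}:=T_n\setminus U_n$ and $b(n):=0$; otherwise $T_n\subseteq U_n$, and letting $i^*$ be the least $i$ with $T_n\cap[w_{n,i}]\neq\emptyset$ (where $U_n=\bigcup_i[w_{n,i}]$), set $T_{n+1}:=T_n\cap[w_{n,i^*}]$ and $b(n):=1$. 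By compactness $\bigcap_n T_n\neq\emptyset$, and any $p\in\bigcap_n T_n$ satisfies $J(p)=b$.

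To run this stage-wise, I compute from the $\psi_-^\Cantor$-name a decreasing sequence of clopen over-approximations $A^s\supseteq A$ (removing only the basic clopens of $A^c$ enumerated by stage $s$) and from the standard enumeration of $U_n$ an increasing sequence of clopen under-approximations $U_n^s\subseteq U_n$. At stage $s$, inductively on $n\leq s$, I define clopen sets $T_0^s=A^s\supseteq T_1^s\supseteq\ldots$ and bits $b_s(n)\in\{0,1\}$ by replacing $T_n,U_n$ in the construction by $T_n^s,U_n^s$; the two tests now involve only clopen sets and hence are decidable, and the witness search is restricted to $i\leq s$. I set $b_s(n):=0$ for $n>s$ and output $q_s:=\langle b_s(0),b_s(1),\ldots\rangle\in\Cantor$; the final output is $\langle q_0,q_1,q_2,\ldots\rangle$.

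By induction on $n$, there is a stage $s_n$ such that for all $s\geq s_n$ the choices $b_s(k)$ and $i^*_s$ for $k\leq n$ agree with those of the classical construction, and $T_{n+1}^s$ is a decreasing clopen sequence converging to $T_{n+1}$. The key ingredient is compactness of $\Cantor$: any decreasing sequence of clopen subsets with empty intersection has an empty term. This settles both cases of the inductive step --- in particular forcing $b_s(n)=1$ eventually when $T_n\subseteq U_n$, and eliminating spurious candidate indices $i<i^*$ in the witness search, since $T_n\cap[w_{n,i}]=\emptyset$ forces $T_n^s\cap[w_{n,i}]=\emptyset$ for large $s$. Hence $q_s\to J(p)$ pointwise in $\Baire$, making $\langle q_0,q_1,\ldots\rangle$ a $\delta_\Cantor^\vee$-name of $p\in A$, as required. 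The main subtlety is that a revision of $b_s(k)$ or $i^*_s$ at a small $k$ invalidates everything at larger $n$, so the stabilisation must be proved inductively rather than uniformly; but since only finitely many readjustments occur below each fixed $n$, the argument closes.
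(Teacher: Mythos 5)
Your proof is correct and rests on the same underlying idea as the paper's: run the Jockusch--Soare Low Basis construction stage-wise on the $\psi_-^\Cantor$-name, producing a sequence of approximations converging to a jump value, and establish convergence by induction on bit positions using compactness of $\Cantor$. The implementation differs, though. You carry along the nested clopen sets $T_n^s$ of the Jockusch--Soare recursion, additionally intersecting with a basic clopen ball $[w_{n,i^*}]$ whenever a bit is set to $1$; the paper dispenses with this explicit bookkeeping, instead deciding the $n$-th bit of $p_m$ directly by testing whether $A^m \In U_n^m \cup \bigcup_{i\in K}U_i^m$, where $K$ is the set of indices $i<n$ whose bit is $0$. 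These tests implicitly track the analogue of your $T_n^s$ (namely $A^m$ minus the $U_i^m$ with bit $0$), but without the extra clopen refinements, which are not needed since the final bits already determine the jump of any point in the limiting intersection. Your route stays closer to the textbook construction and makes nonemptiness of the limit intersection manifest; the paper's is a bit more compact but must verify a posteriori that the limit word lies in $\dom(J^{-1})$. One small oversight in your version: the stage-$s$ recursion is undefined when $T_n^s=\emptyset$ (the witness search over $i\leq s$ fails), which can happen for small $s$ while the bits below $n$ are still unsettled; you need to fix some convention there (e.g.\ set $b_s(n):=0$ and $T_{n+1}^s:=\emptyset$), but since such stages are transient this does not disturb the convergence argument.
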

\begin{proof}
We describe a machine $M$ that given a $\psi_-^\Cantor$--name of a compact set $A \subseteq \Cantor$ produces a sequence $(p_m)_{m \in \IN}$ converging to a $\int \id_{\Cantor}$-name of some element of $A$. The input of $M$ is a list enumerating basic open sets exhausting $\Cantor \setminus A$. The complement of the union of the first $m$ of these subsets shall be denoted $A^m$. Likewise, for each $n \in \mathbb{N}$,
we let $U_n^m$ be the union of the first $m$ basic open subsets exhausting $U_n$.
Here, for simplicity, $(U_n)_{n\in\IN}$ is supposed to be a standard enumeration of the c.e.\ open subsets of Cantor space $\{0,1\}^\IN$
and the aforementioned results on the jump and integral are used analogously for Cantor space.

The computation of each $p_m$ can be considered independently, and proceeds as follows.
For each $n \in \IN$, the machine $M$ performs the following tests\footnote{Of course the first test could be subsumed by the second one;
however, since their interpretation is different, we prefer to mention the first test separately.} in the given order:
\begin{enumerate}
\item Does $A^m \subseteq U_n^m$ hold? If the answer is $\textsc{yes}$, the $n$th bit of $p_m$ is $1$.
\item Let $K$ be the set of indexes $i < n$, so that the $i$th bit of $p_m$ is $0$.
        Test $A^m \subseteq U_n^m \cup \bigcup \limits_{i \in K} U_i^m$.
        If the answer is $\textsc{yes}$, the $n$th bit of $p_m$ is $1$.
\item Otherwise, the $n$th bit of $p_m$ is $0$.
\end{enumerate}

All operations are performed on a finite set of basic open sets, either obtained from the input,
or computable by definition. Therefore, each test is decidable. We will first prove that the $p_m$ converge as $m$ goes to infinity.
This is equivalent to showing that each bit of the $p_m$ changes only finitely many times.

The first test is monotone in $m$, as we have $A^{m+1} \subseteq A^m$ and
$U_n^{m} \subseteq U_n^{m+1}$. Thus, if for some $m$ the $n$th bit of $p_m$ was set to
$1$ due to the first test, the $n$th bit of all $p_{m'}$ for $m' > m$ is $1$, too.

Now consider the second test, and assume that all bits $i$ with $i < n$ remain unchanged.
Then, again by the same argument, once the second test yields \textsc{yes} for some $m$,
it will do so for all larger $m'$ as well. The only way for the second test to change the corresponding
bit from $1$ to $0$ is if some smaller bit has been set from $0$ to $1$ previously.

An inductive argument concludes the proof of convergence: The first bit can change at most once,
from $0$ to $1$. All other bits $n$ can change at most once \emph{for each given configuration of the lower bits}.
If only finitely many changes of the bits smaller than $n$ are possible, then there will be only finitely many changes of the $n$th bit.

It remains to show that the $p_m$ actually converge to a correct output $w$. Basically, the first test ensures
that the limit sequence $w$ specifies a point $x \in A$, while the second test ensures that $w$ is a valid $\int \id_\Cantor$-name, i.e. $w \in \dom(J^{-1})$, in the first instance.

To elaborate this, assume $A \subseteq U_n$ for some $n \in \mathbb{N}$.
Then for every $\int \id_\Cantor$-name $w$ with $J^{-1}(w) \in A$ obviously $w(n) = 1$ has to be true.
On the other hand, for $x \notin A$, there is some neighborhood $U$ of $A$ with $x \notin U$.
It is possible to choose $U$ as c.e.\ open (for instance by choosing the complement of some sufficiently small
clopen basic neighborhood of $x$), thus, there is an $n \in \mathbb{N}$ with $A \subseteq U_n$,
but $x \notin U_n$. Thus, having $w(n) = 1$ for each $n \in \mathbb{N}$ with $A \subseteq U_n$
for a $(\int \textnormal{id})$-name $w$ is both necessary and sufficient to ensure $J^{-1}(w) \in A$.

In the next step, we have to show that $A \subseteq U_n$ already guarantees the existence of an
$m \in \mathbb{N}$ with $A^m \subseteq U_n^m$. The other direction is trivial. As $A \subseteq U_n$
is equivalent to $A^{\rm c} \cup U_n = \Cantor$, the basic open sets exhausting $A^{\rm c}$ and $U_n$ are an
open cover of $\Cantor$. Since $\Cantor$ is compact, there has to be a finite subcover. Thus, there is
some $m \in \mathbb{N}$, so that the first $m$ basic open sets in the $\psi_-$-name of $A$ together
with the first $m$ basic open sets listed for $U_n$ already cover $\Cantor$, that is fulfills
$A^m \subseteq U_n^m$, which concludes this part of the proof.

Now we have to show that the second test ensures that the limit sequence $w$ is in the
domain of $\int \id_\Cantor$. This amounts to proving
$$\left ( \bigcap \limits_{i \in \mathbb{N}, w(i) = 1} U_i \right ) \setminus \left ( \bigcup \limits_{j \in \mathbb{N}, w(j) \neq 1} U_j \right ) \neq \emptyset.$$
We note that this difference is automatically a singleton, if non-empty, since any two distinct points
can be separated by two c.e.\ open sets.
We will use the abbreviations
$X := \{i \in \mathbb{N} \mid w(i) = 1 \textnormal{ due to the first test}\}$,
$Y := \{i \in \mathbb{N} \mid w(i) = 1 \textnormal{ due to the second test}\}$ and
$Z := \{i \in \mathbb{N} \mid w(i) = 0\}$, and $U_i^{\rm c} := \{0,1\}^\IN \setminus U_i$.
With this, we have to show:
$$\left ( \bigcap \limits_{i \in X} U_i \right ) \cap \left ( \bigcap \limits_{j \in Y} U_j \right ) \cap \left ( \bigcap \limits_{k \in Z} U_k^{\rm c} \right ) \neq \emptyset.$$
Taking into consideration our results on the first test, this simplifies to:
$$A \cap \left ( \bigcap \limits_{j \in Y} U_j \right ) \cap \left ( \bigcap \limits_{k \in Z} U_k^{\rm c} \right ) \neq \emptyset.$$

Assume that already $A \cap \left ( \bigcap \limits_{k \in Z} U_k^{\rm c} \right ) = \emptyset$
would hold. By the finite intersection property in compact spaces, this implies the existence of a
(smallest) $k_0 \in \mathbb{N}$ with
$A \cap \left ( \bigcap \limits_{k \in Z, k \leq k_0} U_k^{\rm c} \right ) = \emptyset$.
Rearranging the expression yields
$A \subseteq U_{k_0} \cup \bigcup \limits_{k \in Z, k < k_0} U_k$,
so the second test would have been triggered for $k_0$, so $k_0 \notin Z$ follows.
This contradicts the assumption, so we have $A \cap \left ( \bigcap \limits_{k \in Z} U_k^{\rm c} \right ) \neq \emptyset$.

Now we choose some $x \in A \cap \left ( \bigcap \limits_{k \in Z} U_k^{\rm c} \right )$.
Assume $x \notin \bigcap \limits_{j \in Y} U_j$. There has to be some $j_0 \in Y$ with
$x \notin U_{j_0}$. Now $j_0 \in Y$ implies $A \subseteq U_{j_0} \cup \bigcup \limits_{k \in Z, k < j_0} U_k$.
According to the choice of $x$, we have $x \in A$, but $x \notin \bigcup \limits_{k \in Z, k < j_0} U_k$.
This implies $x \in U_{j_0}$, contradicting the assumption. Thus, we have:
$$  A \cap \left ( \bigcap \limits_{k \in Z} U_k^{\rm c} \right )
   = A \cap \left ( \bigcap \limits_{j \in Y} U_j \right ) \cap \left ( \bigcap \limits_{k \in Z} U_k^{\rm c} \right ).$$

As the set on the left is non-empty, so is the set on the right.
With that we know that our Limit-machine always produces a valid output,
that is the jump of some element. We have already established that any
valid output is necessarily correct, and thereby the proof is complete.
\end{proof}

As a corollary of this uniform result we get the known version of the Low Basis Theorem.

\begin{corollary}[Low Basis Theorem of Jockusch and Soare]
Any non\-empty co-c.e.\ closed set $A \subseteq \Cantor$ contains a low point.
\end{corollary}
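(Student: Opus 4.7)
The plan is to deduce the classical Low Basis Theorem as an essentially immediate corollary of the preceding Uniform Low Basis Theorem, by applying the computable realizer of $\C_\Cantor$ (as a $(\psi_-^\Cantor, \delta_\Cantor^\vee)$-computable multi-valued function) to a computable input.

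First, let $A \subseteq \Cantor$ be a non-empty co-c.e.\ closed set. By definition, $A$ is a computable point of $\AA_-(\Cantor)$, that is, there exists a computable $w \in \Baire$ with $\psi_-^\Cantor(w) = A$. Next, I would invoke Theorem~\ref{uniformlowbasistheorem}, which asserts that $\C_\Cantor$ is low computable; unfolding the definition, this means that there exists a computable function $F \colon \In \Baire \to \Baire$ such that $\delta_\Cantor^\vee \circ F(w) \in \C_\Cantor(\psi_-^\Cantor(w)) = A$ for every valid name $w$ of a non-empty closed set.

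Applying $F$ to the computable name $w$ of $A$ yields a computable sequence $q := F(w) \in \Baire$ which is a $\delta_\Cantor^\vee$-name of some point $p \in A$. Since $\delta_\Cantor^\vee$ is (the Cantor-space version of) the representation $\Low$, and by the Low Points Lemma a sequence admits a computable $\Low$-name if and only if it is low, it follows that $p$ is a low point of $A$.

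There is no real obstacle here: the entire content of the argument is the uniform theorem already proved, together with the bookkeeping identification $\delta_\Cantor^\vee = \id_\Cantor \circ J^{-1} \circ \lim$ and the characterization of lowness via computable $\Low$-names. The only thing to be careful about is making sure that when the computable realizer of $\C_\Cantor$ is fed a computable input, the output is a genuinely computable element of $\Baire$ (not merely one computable relative to some oracle), which is automatic since both $F$ and $w$ are computable in the ordinary sense.
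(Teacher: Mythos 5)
Your argument is exactly the one the paper intends: apply the computable realizer witnessing that $\C_\Cantor$ is $(\psi_-^\Cantor,\delta_\Cantor^\vee)$--computable to a computable $\psi_-^\Cantor$--name of $A$, and then invoke the Low Points Lemma (in its Cantor-space form) to conclude that the resulting computable $\delta_\Cantor^\vee$--name denotes a low point of $A$. The paper leaves this unfolding implicit, so your proposal simply spells out the same one-line deduction.
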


The property that computable instances always admit low solutions is preserved
under Weihrauch reducibility, as pointed out in \cite{BG09b}.
We will now show that this property also holds uniformly.
This observation invites the question where $\Low=J^{-1} \circ \lim$ is placed in the Weihrauch lattice.
We will start the answer with an obvious corollary to Theorem \ref{uniformlowbasistheorem},
which will then be extended.

\begin{corollary}
\label{corollaryccjlim}
$\C_\Cantor \leqSW \Low$.
\end{corollary}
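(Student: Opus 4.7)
The plan is to read the reduction directly off the Uniform Low Basis Theorem, exploiting that $\Low$ is single-valued. First I would unpack the statement that $\C_\Cantor$ is low computable: by definition this means that $\C_\Cantor$ has a computable $(\psi_-^\Cantor, \delta_\Cantor^\vee)$-realizer, i.e.\ a computable function $H: \In \Baire \to \Baire$ such that for every $\psi_-^\Cantor$-name $p$ of a non-empty closed set $A\In\Cantor$, the sequence $H(p)$ is a $\delta_\Cantor^\vee$-name of some point of $A$.

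Next I would translate this into the language of $\Low$. Since $\delta_\Cantor = \id|_\Cantor$ and $\delta_\Cantor^\vee = \id_\Cantor \circ J^{-1} \circ \lim$, the fact that $H(p)$ is a $\delta_\Cantor^\vee$-name of an element of $A$ means exactly that $H(p)\in\dom(J^{-1}\circ\lim)$ and $J^{-1}(\lim H(p)) = \Low(H(p)) \in A$. In particular, the proof of Theorem~\ref{uniformlowbasistheorem} has already been arranged precisely so that $H(p)$ lands in $\dom(\Low)$ (this is the role of the second test, which ensures that the limit of the $p_m$ is in $\dom(J^{-1})$) and so that the resulting point lies in $\Cantor$ and in $A$.

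Finally I would conclude strong reducibility. Because $\Low$ is single-valued and is viewed with the identity representation on Baire space, every realizer $G\vdash\Low$ agrees with $\Low$ on $\dom(\Low)$. Taking $K:=\id$, we therefore have
\[K\circ G\circ H(p) = \Low(H(p)) \in A = \C_\Cantor(\psi_-^\Cantor(p))\]
for every $G\vdash\Low$ and every valid input $p$, which is exactly the condition $K\circ G\circ H\vdash \C_\Cantor$ defining $\C_\Cantor \leqSW \Low$. There is no substantive obstacle here; the only point requiring care is the verification that $H(p)$ is in $\dom(\Low)$, and this is precisely what was shown in the second half of the proof of Theorem~\ref{uniformlowbasistheorem}.
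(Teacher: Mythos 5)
Your proof is correct and takes essentially the approach the paper intends (the paper states the corollary as ``obvious'' without spelling it out). You unpack low computability of $\C_\Cantor$ from Theorem~\ref{uniformlowbasistheorem} to get a computable $H$ landing in $\dom(\Low)$, and then use single-valuedness of $\Low$ together with $K=\id$ to exhibit the strong reduction directly; this is precisely the specialization to $\C_\Cantor$ of the implication ``low computable $\Rightarrow \leqSW \Low$'' that the paper proves later in Theorem~\ref{thm:low-computability}.
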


Now we will lift this observation from compact to locally compact choice. This involves again the same idea as the proof of Proposition \ref{prop:locally-compact-choice}, albeit in a new disguise as the following lemma:

\begin{lemma}
\label{ccbetacncbetahat}
Let $\beta$ be a representation of Cantor space $\Cantor$, and let $\delta_\IN$ be the standard representation of $\mathbb{N}$.
If the multi-valued function $\C_\Cantor:\In\AA_-(\{0,1\}^\IN)\mto\Cantor$ is $(\psi_-^\Cantor, \beta)$--computable,
then the multi-valued function
$\C_{\mathbb{N} \times \Cantor}:\In\AA_-(\IN\times\{0,1\}^\IN)\mto\IN \times \Cantor$ is
$(\psi_-^{\mathbb{N} \times \Cantor}, (\delta_\mathbb{N} \times \beta)^\Delta)$--computable.
\end{lemma}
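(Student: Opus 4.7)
The plan is to adapt the proof of Proposition~\ref{prop:locally-compact-choice}, where a call to $\C_\IN$ was used to find a nonempty slice, and replace that call by a mind-change computation, which is precisely what the discrete-jump output representation $(\delta_\IN\times\beta)^\Delta$ allows. Given a $\psi_-^{\IN\times\Cantor}$-name of a nonempty $A\in\AA_-(\IN\times\Cantor)$, I would first uniformly compute, for each $n\in\IN$, a $\psi_-^\Cantor$-name of the slice $A_n:=\{x\in\Cantor:(n,x)\in A\}\in\AA_-(\Cantor)$; this is straightforward. Because $\Cantor$ is computably compact, the predicate ``$A_n=\emptyset$'' is c.e.\ uniformly in $n$ and the input (witnessed by finitely many basic balls from the complement covering $\Cantor$). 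Let $M$ denote the fixed computable machine given by the hypothesis, which transforms a $\psi_-^\Cantor$-name of a nonempty closed $B\In\Cantor$ into a $\beta$-name of some point of $B$.

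Next, I would invoke Proposition~\ref{prop:mind-changes} to identify $(\delta_\IN\times\beta)^\Delta$-computability with $(\delta_\IN\times\beta)$-computability with finitely many mind changes, and describe the algorithm in that intuitive model. Start with a candidate $n_c:=0$, output the $(\delta_\IN\times\beta)$-name $\langle\,\delta_\IN\text{-name of }n_c,\;M(A_{n_c})\,\rangle$, and in parallel search for evidence that $A_{n_c}=\emptyset$. As soon as such evidence is found, erase the output, increment $n_c$ to the next index not yet excluded, and restart both parallel processes with the new slice. Since $A\neq\emptyset$, there is a least $n^*$ with $A_{n^*}\neq\emptyset$; the candidate $n_c$ is revised at most $n^*$ times and then stabilizes at $n^*$, while $M$, running on the valid input $A_{n^*}$, eventually produces a valid $\beta$-name of some $x\in A_{n^*}$. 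The final, stable output is thus a $(\delta_\IN\times\beta)$-name of the element $(n^*,x)\in A$, which is exactly what is required.

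The main obstacle I expect is formalizing the ``erase and restart'' step so that the intermediate garbage produced by $M$ applied to an empty slice is not mistakenly interpreted as part of the final output. This is the standard use case for the discrete-jump output, and it is precisely what Proposition~\ref{prop:mind-changes} packages: by establishing the construction in the mind-change model and then translating via that equivalence, the partial outputs corresponding to wrong candidates are harmlessly overridden and only the final, correct guess contributes to the limit. With this translation, the procedure described above is a $(\psi_-^{\IN\times\Cantor},(\delta_\IN\times\beta)^\Delta)$-realizer of $\C_{\IN\times\Cantor}$, completing the proof.
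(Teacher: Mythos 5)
Your proof is correct and takes essentially the same approach as the paper: both compute a monotone sequence of candidate slices $A_n$ using compactness of $\Cantor$ to semi-decide slice emptiness, and re-run the given $\C_\Cantor$-realizer on the current slice, converging to the least $n$ with $A_n\neq\emptyset$. The only stylistic difference is that you phrase the construction in the mind-change model and translate via Proposition~\ref{prop:mind-changes}, whereas the paper directly builds the eventually constant output sequence $(p_m)_{m\in\IN}$ and feeds it to $(\delta_\IN\times\beta)^\Delta$.
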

\begin{proof}
We describe a machine solving the latter task. Given a $\psi_-^{\mathbb{N} \times \Cantor}$-name of a closed set $A\In\IN\times\Cantor$, it produces a sequence $(p_m)_{m \in \IN}$. Again we use $A^m$ to denote the complement of the union of the first $m$ basic open sets listed in the input.

As $(\{n\} \times \Cantor) \cap A^m = \emptyset?$ is decidable and we have $A^m \neq \emptyset$, we can compute $n_m = \min \{n \in \IN \mid (\{n\} \times \Cantor) \cap A^m \neq \emptyset\}$. Using these values, the output sequence shall be of the form $p_m = \langle\delta_\IN^{-1}(n_m), q_{n_m}\rangle$. Note that $n_m$ will be eventually constant as $m$ goes to infinity, hence the same is true for the $p_m$.

The values $q_{n_m}$ are computed as follows. A machine computing $\C_\Cantor:\In (\AA_-(\{0,1\}^\IN), \psi_-^\Cantor) \mto (\Cantor, \beta)$ is simulated on input denoting \newline $\pr_2((\{n_m\} \times \Cantor) \cap A)$ for $k$ steps, as long as $(\{n_m\} \times \Cantor) \cap A^k \neq \emptyset$ for $k \in \IN$. If a $k$ is reached with $(\{n_m\} \times \Cantor) \cap A^k = \emptyset$, the sequence $q_{n_m}$ will be continued by $0$s.

If $n_m$ has reached its final value for $m_0$, then $q_{n_{m_0}}$ will be a $\beta$-name for some $w$ with $n_{m_0} \times w \in A$; this is sufficient to ensure that the overall output of the described computation is a $(\delta_\mathbb{N} \times \beta)^\Delta$-name of $n_{m_0} \times w \in A$.
\end{proof}

As a consequence we obtain that $\Low$ is strictly above locally compact choice.

\begin{theorem}
\label{thm:low-reals}
$\C_{\mathbb{N} \times \Cantor} \leqSW \Low$ and $\Low \nleqW \C_{\mathbb{N} \times \Cantor}$.
\end{theorem}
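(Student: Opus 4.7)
The plan is to prove the two claims separately: the reduction $\C_{\mathbb{N} \times \Cantor} \leqSW \Low$ via Lemma \ref{ccbetacncbetahat} applied to the Uniform Low Basis Theorem, and the non-reduction $\Low \nleqW \C_{\mathbb{N} \times \Cantor}$ via the Quotient Theorem together with a computable-input argument.

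For $\C_{\mathbb{N} \times \Cantor} \leqSW \Low$, Corollary \ref{corollaryccjlim} tells us that $\C_\Cantor$ is $(\psi_-^\Cantor, \delta_\Cantor^\vee)$--computable, where $\delta_\Cantor^\vee = \delta_\Cantor \circ J^{-1} \circ \lim$ is the low representation of Cantor space. Applying Lemma \ref{ccbetacncbetahat} with $\beta := \delta_\Cantor^\vee$ yields at once that $\C_{\mathbb{N} \times \Cantor}$ is $(\psi_-^{\mathbb{N} \times \Cantor}, (\delta_\mathbb{N} \times \delta_\Cantor^\vee)^\Delta)$--computable. It remains to show that the output representation $(\delta_\mathbb{N} \times \delta_\Cantor^\vee)^\Delta$ is computably reducible to $\delta_{\mathbb{N} \times \Cantor}^\vee$; this yields low computability of $\C_{\mathbb{N} \times \Cantor}$ and therefore $\C_{\mathbb{N} \times \Cantor} \leqSW \Low$. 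The key ingredient is uniformity of the jump under pairing with a natural number: for the standard enumeration $(U_j)_{j \in \mathbb{N}}$ of c.e.\ open subsets of $\Baire$ there is a computable $\sigma : \mathbb{N}^2 \to \mathbb{N}$ with $\langle n\widehat{0}, y\rangle \in U_j \iff y \in U_{\sigma(j,n)}$, whence $J(\langle n\widehat{0}, y\rangle)(j) = J(y)(\sigma(j,n))$. A straightforward diagonal construction using $\sigma$ then collapses the outer discrete limit (which stabilizes $n$ and the inner approximating sequence) and the inner Baire-topology limit (converging to $J(y)$) into a single Baire-convergent sequence whose limit is $J(\langle n\widehat{0}, y\rangle)$, i.e., a $\delta_{\mathbb{N} \times \Cantor}^\vee$--name of the output pair.

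For $\Low \nleqW \C_{\mathbb{N} \times \Cantor}$, suppose to the contrary that $\Low \leqW \C_{\mathbb{N} \times \Cantor}$. By Corollary \ref{cor:real-choice} this gives $\Low \leqW \C_\Cantor \times \C_\mathbb{N}$. Since $\Low : \In\Baire \to \Baire$ is single-valued and $\Baire$ is a computable metric space, the Quotient Theorem \ref{thm:quotient} applied with $g := \C_\mathbb{N}$ yields $\Low \leqW \C_\mathbb{N}$; by Theorem \ref{thm:choice-natural} this means $\Low$ is computable with finitely many mind changes. In particular, on every computable $p \in \dom(\Low)$ the output $\Low(p)$ is itself computable (the algorithm stabilizes after finitely many revisions, and its final output on a computable input is computable). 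But pick any low, non-computable $x \in \Baire$; because $x$ is low we have $J(x) \in \dO{2}$, and Shoenfield's limit lemma produces a computable $p$ with $\lim p = J(x)$. Then $p \in \dom(\Low)$ and $\Low(p) = x$ is not computable, a contradiction.

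The only real technical hurdle is the representation-level reduction $(\delta_\mathbb{N} \times \delta_\Cantor^\vee)^\Delta \leq \delta_{\mathbb{N} \times \Cantor}^\vee$ in the first part, which becomes essentially bookkeeping once the computable parameterization of c.e.\ open sets under pairing is in hand. The second part is clean: the Quotient Theorem eliminates the $\C_\Cantor$ factor automatically, and the remainder is the classical observation that $\Low$ can send computable sequences to non-computable low points.
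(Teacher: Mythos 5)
Your proposal is correct and for the first part follows the paper's own path: apply the Uniform Low Basis Theorem through Lemma~\ref{ccbetacncbetahat} with $\beta := \delta_\Cantor^\vee$, then reduce the resulting output representation $(\delta_\mathbb{N} \times \delta_\Cantor^\vee)^\Delta$ to $(\delta_\mathbb{N}\times\delta_\Cantor)^\vee$. The paper dispatches that last step more algebraically, invoking the identity $\beta^{\vee\Delta}\equiv\beta^\vee$ (which absorbs the discrete jump by a diagonal argument) together with the implicit fact that a pair of a $\delta_\mathbb{N}$-name and a $\delta_\Cantor^\vee$-name yields a $(\delta_\mathbb{N}\times\delta_\Cantor)^\vee$-name; your $\sigma$-construction spells out exactly that ``jump of a pair'' step and the diagonal collapse in one go, which is the same work made explicit.

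For the second part, both you and the paper begin by using $\C_{\mathbb{N}\times\Cantor}\equivW\C_\Cantor\times\C_\mathbb{N}$ and single-valuedness of $\Low$ to feed the Quotient Theorem and conclude $\Low\leqW\C_\mathbb{N}$. The paper then closes by transitivity through Corollary~\ref{corollaryccjlim} to infer $\C_\Cantor\leqW\C_\mathbb{N}$, and cites \cite{BG09b} for the fact that this is false. You close differently: by Theorem~\ref{thm:choice-natural}, $\Low\leqW\C_\mathbb{N}$ would make $\Low$ computable with finitely many mind changes, hence non-uniformly computable, and a low non-computable $x$ together with Shoenfield's Limit Lemma produces a computable $p\in\dom(\Low)$ with $\Low(p)=x$ non-computable. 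This alternative finish is valid, more self-contained (it does not appeal to the separate result $\C_\Cantor\nleqW\C_\mathbb{N}$), and arguably makes more transparent exactly which feature of $\Low$ obstructs the reduction.
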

\begin{proof}
To show the reduction, we make use of Theorem \ref{uniformlowbasistheorem} together
with Lemma \ref{ccbetacncbetahat} and the observation that $\beta^{\vee\Delta}\equiv\beta^{\vee}$ for any representation $\beta$.
To see $\Low \nleqW \C_{\mathbb{N} \times \Cantor}$, observe that $\Low$
is single-valued. Therefore, the assumption of the contrary together with Corollary \ref{cor:quotient-reals}
would imply $\Low \leqW \C_\mathbb{N}$. By transitivity and Corollary \ref{corollaryccjlim}
we get $\C_\Cantor \leqW \C_\mathbb{N}$. As shown in \cite{BG09b}, the latter is wrong, providing
the sought contradiction.
\end{proof}

As $J^{-1}$ is computable, the upper bound $\Low=J^{-1} \circ \lim \leqW \lim$ is obtained directly.
As $\lim$ maps some computable inputs to non-low outputs, we even have $\Low <_W \lim$.
With this, we have determined precisely the place of $J^{-1} \circ \lim$ in the diagram provided
in Figure~\ref{fig:choice}.

A question regarding the Weihrauch degree of $\Low$ that is left open by the results presented
so far is its behavior under products. Remarkable consequences of the following answers are that the low
real numbers do not form a field, and that the integral does not commute with products.

\begin{theorem}
\label{thm:L-not-idempotent}
$\Low \lW \Low \times \Low$.
\end{theorem}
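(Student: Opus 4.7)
The easy half $\Low \leqW \Low \times \Low$ is immediate from pointedness of $\Low$, so the content of the theorem is $\Low \times \Low \nleqW \Low$. I will argue by contradiction: assume there are computable $H, K$ with
\[
  K \langle \langle p_1, p_2 \rangle, \Low(H\langle p_1, p_2\rangle) \rangle = \langle \Low(p_1), \Low(p_2) \rangle
\]
for every $\langle p_1, p_2 \rangle$ in the domain of $\Low \times \Low$, and derive a contradiction with the already established $\Low \lW \lim$.

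The first step is to read off the degree-theoretic consequences of the assumed reduction. Writing $r := \Low(H\langle p_1, p_2\rangle) = J^{-1}(\lim H\langle p_1, p_2\rangle)$ and restricting to computable $p_1, p_2$ with $\lim p_i = J(\alpha_i)$ for low $\alpha_i$, the sequence $r$ is low and $\alpha_1 \oplus \alpha_2 \leqT r$ uniformly in $(p_1, p_2)$, because $K$ is computable and the pair $(p_1, p_2)$ is computable. Since $J^{-1}$ is computable we also have $J(r) = \lim H\langle p_1, p_2 \rangle$, so the Turing jump of the above relation yields
\[
  J(\alpha_1) \oplus J(\alpha_2) \;\equivT\; J(\alpha_1 \oplus \alpha_2) \;\leqT\; J(r) \;=\; \lim H\langle p_1, p_2\rangle,
\]
again uniformly in the computable input.

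The second step is to upgrade the hypothetical reduction to a reduction for $\lim$. Using this jump inequality together with the assumption $\Low \times \Low \leqW \Low$, one inductively obtains $\Low^n \leqW \Low$ for every $n$; combining these finite products using the parallelization bookkeeping of Section~\ref{sec:non-determinism} (with the key property $\C_\IN \leqW \Low$ and the observation that $\widehat{\C_\IN} \equivW \lim$) lifts the hypothesis to $\widehat{\Low} \leqW \Low$. Since $\widehat{\C_\IN} \leqW \widehat{\Low}$ by monotonicity of parallelization, this forces $\lim \equivW \widehat{\C_\IN} \leqW \Low$, contradicting $\Low \lW \lim$ which was noted before the theorem.

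The hard part will be the bootstrap from $\Low \times \Low \leqW \Low$ to $\widehat{\Low} \leqW \Low$. A direct diagonal alternative, which I expect to be cleaner, goes as follows: treat $\lim H\langle p_1, p_2\rangle$ as a computable function of $(p_1, p_2)$ with one application of $\lim$ on top, while $J(\alpha_1 \oplus \alpha_2)$ is the jump of a point that is itself only $\lim$-computable from $(p_1, p_2)$, so it requires two independent applications of $\lim$ to be produced. Making this precise via a recursion-theoretic diagonalization against the (countably many) candidates $(H, K)$ — for each such pair select computable $p_1, p_2$ encoding low $\alpha_1, \alpha_2$ for which the proposed $r$ fails to compute $\alpha_1 \oplus \alpha_2$ — is the main technical obstacle. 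The alternative parallelization route sketched above is the backup if the direct diagonalization turns out to be too delicate.
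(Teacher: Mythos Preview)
Your first step already contains almost everything needed, but you then take a long detour that does not work. Once you observe that for computable $p_1,p_2$ the point $r=\Low(H\langle p_1,p_2\rangle)$ is low and that $\alpha_1\oplus\alpha_2\leqT r$, you have shown that the assumed reduction forces $\alpha_1\oplus\alpha_2$ to be low whenever $\alpha_1,\alpha_2$ are low. At this point the paper simply invokes a classical result of Spector (see Proposition~V.2.26 in Odifreddi): there exist low $a,b\in\Cantor$ with $\langle a,b\rangle$ not low. That finishes the proof in one line; your degree-theoretic reading of the reduction is exactly the right setup for it, you just did not recognize that the required witnesses are already in the literature.

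Your second step, by contrast, has a real gap. From $\Low\times\Low\leqW\Low$ you correctly get $\Low^n\leqW\Low$ for every $n$, i.e.\ $\Low^*\leqW\Low$. But the passage from $f^*\leqW f$ (even together with $\C_\IN\leqW f$) to $\widehat{f}\leqW f$ is \emph{not} valid: take $f=\C_\IR$. We have $\C_\IR$ idempotent, $\C_\IN\leqW\C_\IR$, yet $\widehat{\C_\IR}\geq\widehat{\C_\IN}\equivW\lim\not\leqW\C_\IR$. So the ``parallelization bookkeeping'' you allude to cannot close this gap, and the route to $\lim\leqW\Low$ via $\widehat{\Low}\leqW\Low$ is blocked. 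The backup diagonalization you sketch is too vague to assess, and in any case unnecessary once you have Spector's theorem in hand.
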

\begin{proof}
By a result of Spector (see \cite{Spe56} or Proposition V.2.26 in \cite{Odi89})
there are sequences $a, b \in \{0, 1\}^\mathbb{N}$,
so that both $a$ and $b$ are low, but $\langle a, b \rangle$ is not low.
Since $a$ and $b$ are low, $J(a)$ and $J(b)$ are Turing reducible to the halting problem,
there are computable sequences $\langle a_0,a_1,...\rangle$ and $\langle b_0,b_1,...\rangle$
with $\lim_{i\to\infty} a_i = J(a)$ and $\lim_{i\to\infty} b_i = J(b)$.
Then $(\langle a_0,a_1,...\rangle,\langle b_0,b_1,...\rangle)$ is computable, and we have
$(J^{-1} \circ \lim\times J^{-1} \circ \lim)(\langle a_0,a_1,...\rangle,\langle b_0,b_1,...\rangle) = (a, b)$.
Thus, $(J^{-1} \circ \lim) \times (J^{-1} \circ \lim)$ can map a computable input to an output that is not low.
\end{proof}

In other words, this means that $\Low$ is not idempotent. However, it has a different property.
We call a function $T:\In\Baire\mto\Baire$ a {\em jump operator}, if for all computable
functions $F:\In\Baire\to\Baire$ there exists a computable function $G:\In\Baire\to\Baire$
such that $F\circ T=T\circ G$.  This notion has been introduced in \cite{Bre09x} (for single-valued functions)
and using this terminology the following has been proved in \cite{Bra07x}.

\begin{lemma}
\label{lem:lim-jump}
The limit $\lim$ and the inverse of the Turing jump $J^{-1}$ are jump operators
and hence $\Low$ is also a jump operator.
\end{lemma}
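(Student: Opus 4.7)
The statement has three parts: that $\lim$ is a jump operator, that $J^{-1}$ is a jump operator, and that the composite $\Low=J^{-1}\circ\lim$ inherits the property. We dispose of the last first, as a formal telescoping. Given any computable $F$, apply the jump--operator property of $J^{-1}$ to obtain a computable $H$ with $F\circ J^{-1}=J^{-1}\circ H$, then apply that of $\lim$ to $H$ to obtain a computable $G$ with $H\circ\lim=\lim\circ G$; then
\[F\circ\Low=F\circ J^{-1}\circ\lim=J^{-1}\circ H\circ\lim=J^{-1}\circ\lim\circ G=\Low\circ G.\]
So it suffices to verify the two basic cases separately.

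For $J^{-1}$: since $F:\In\Baire\to\Baire$ is computable, the preimages of the standard basic c.e.\ open sets $U_n$ are again c.e.\ open in $\dom(F)$, uniformly in $n$, so there is a computable $h:\IN\to\IN$ with $F^{-1}(U_n)=U_{h(n)}\cap\dom(F)$ for every $n$. Define $G:\Baire\to\Baire$ by $G(p)(n):=p(h(n))$; plainly $G$ is computable. For every $q\in\dom(F)$ we then have, bit by bit,
\[G(J(q))(n)=J(q)(h(n))=\chi_{U_{h(n)}}(q)=\chi_{F^{-1}(U_n)}(q)=\chi_{U_n}(F(q))=J(F(q))(n),\]
so $G\circ J=J\circ F$ on $\dom(F)$, i.e.\ $J^{-1}\circ G=F\circ J^{-1}$.

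For $\lim$: fix a Turing machine $M$ computing $F$, and define $G$ by
\[G(\langle p_0,p_1,p_2,\ldots\rangle):=\langle r_0,r_1,r_2,\ldots\rangle,\]
where $r_i\in\Baire$ is obtained by simulating $M$ on input $p_i$ for $i$ steps, reading off the finite output word $w_i$ produced so far, and padding with zeros: $r_i:=w_i\widehat{0}$. This $G$ is clearly total and computable. The heart of the argument is to verify that whenever $q:=\lim_{i\to\infty}p_i$ exists and lies in $\dom(F)$, the output sequence $(r_i)$ converges to $F(q)$ in Baire space. Fix $n$; the $(n{+}1)$st symbol of $F(q)$ is produced by $M$ on input $q$ after reading some fixed finite prefix of $q$ and within some fixed finite number of steps. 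Since $p_i\to q$, for all sufficiently large $i$ that prefix of $p_i$ coincides with that of $q$, hence the $i$-step computation of $M$ on $p_i$ performs exactly the same actions as on $q$ and in particular produces the first $n{+}1$ symbols of $F(q)$. Thus the first $n{+}1$ symbols of $r_i$ eventually agree with those of $F(q)$, giving $r_i\to F(q)$ and hence $\lim\circ G=F\circ\lim$.

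\textbf{Main obstacle.} The delicate point is the verification for $\lim$: the intermediate inputs $p_i$ need not lie in $\dom(F)$, so one cannot appeal to $F(p_i)$ directly. Restricting to bounded finite simulations of $M$ circumvents this, since each $r_i$ is manufactured without any reference to whether $p_i\in\dom(F)$, and the convergence argument only needs $p_i\to q$ in the Baire metric together with the fact that $M$'s computation on $q\in\dom(F)$ genuinely produces $F(q)$.
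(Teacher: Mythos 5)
The paper itself gives no proof of this lemma; it simply cites \cite{Bra07x}, so there is no internal argument against which to compare yours. That said, your self-contained argument is essentially correct, and the structure (compose two jump operators; handle $J^{-1}$ and $\lim$ separately by explicit constructions) is the natural one.

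For $J^{-1}$: the key observation that $F^{-1}(U_n)=U_{h(n)}\cap\dom(F)$ for a computable $h$, and the resulting $G(p)(n):=p(h(n))$, is exactly the standard ``pullback along the jump'' construction, and your verification $G\circ J=J\circ F$ on $\dom(F)$ is correct. For $\lim$: the bounded-simulation construction $r_i:=w_i\widehat 0$ is the right move, and your convergence argument is sound; the monotonicity of the write-once output tape is what makes ``first $n{+}1$ symbols stabilize'' go through for all $i\geq\max(s,N)$, as you say. The composition argument is a routine diagram chase. One point worth making explicit: with your constructions, $\dom(T\circ G)$ is in general a proper superset of $\dom(F\circ T)$ (e.g.\ for $\lim$, the simulated outputs $r_i$ can stabilize even when $\lim_i p_i\notin\dom(F)$). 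So the equality $F\circ T=T\circ G$ in the definition of jump operator should be read as $T\circ G$ \emph{extending} $F\circ T$, or as equality restricted to $\dom(F\circ T)$; that is the reading the paper actually uses in Theorem~\ref{thm:low-computability} and Proposition~\ref{prop:jump-cone} (where one only ever applies the identity to names already coming from the relevant domain). If one insists on exact equality of partial functions, one should cut $G$ down to $\dom(F\circ T)$ in the $J^{-1}$ case and to the set of names of sequences converging into $\dom(F)$ in the $\lim$ case; both restrictions are harmless but neither preserves computability of $G$ in general, so the extension reading is the correct one. With that understood, your proof is complete.
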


Now we can formulate our main characterization of low computability.

\begin{theorem}[Low computability]
\label{thm:low-computability}
Let $f$ be a multi-valued function on represented spaces.
Then the following are equivalent:
\begin{enumerate}
\item $f\leqSW\Low$,
\item $f$ is low computable.
\end{enumerate}
\end{theorem}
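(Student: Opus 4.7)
The plan is to verify the two implications using only the definition of the low representation $\delta_Y^\vee = \delta_Y \circ J^{-1} \circ \lim = \delta_Y \circ \Low$ and the jump operator property of $\Low$ recorded in Lemma~\ref{lem:lim-jump}. Throughout I assume $f:\In X\mto Y$ with representations $\delta_X,\delta_Y$.

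For the direction $(2)\Rightarrow(1)$ I would simply unpack the definition. Low computability provides a computable $F:\In\Baire\to\Baire$ with $\delta_Y^\vee F(p)\in f\delta_X(p)$ for every $p\in\dom(f\delta_X)$. Since $\delta_Y^\vee = \delta_Y\circ\Low$, this says exactly that $\delta_Y(\Low\circ F)(p)\in f\delta_X(p)$. Taking $H:=F$ and $K:=\id$, we get $K\circ G\circ H\vdash f$ for the (essentially unique) realizer $G$ of the single-valued $\Low$, so $f\leqSW\Low$.

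For the direction $(1)\Rightarrow(2)$, the idea is to swallow the outer $K$ into the input side using that $\Low$ is a jump operator. From $f\leqSW\Low$ we obtain computable $H,K$ with $\delta_Y\bigl(K\circ\Low\circ H(p)\bigr)\in f\delta_X(p)$ for all $p\in\dom(f\delta_X)$. By Lemma~\ref{lem:lim-jump}, $\Low$ is a jump operator, so for the computable $K$ there exists a computable $G$ with $K\circ\Low=\Low\circ G$. Substituting, we obtain
\[
\delta_Y^\vee(G\circ H(p))=\delta_Y\bigl(\Low\circ G\circ H(p)\bigr)=\delta_Y\bigl(K\circ\Low\circ H(p)\bigr)\in f\delta_X(p),
\]
and $G\circ H$ is computable. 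Hence $G\circ H$ is a $(\delta_X,\delta_Y^\vee)$-realizer of $f$, which is exactly low computability.

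I expect the proof to be quite short because the heavy lifting has already been done earlier: the jump operator property of $\Low$ (Lemma~\ref{lem:lim-jump}) is precisely the tool that makes strong Weihrauch reducibility to $\Low$ coincide with computability into the jumped/integrated output representation. The only subtle point to emphasize is why strong Weihrauch reducibility (rather than ordinary $\leqW$) is the correct notion here: ordinary $\leqW$ would allow $K$ to depend on the original input $p$ as well, and then the jump-operator argument could not be applied directly to factor $K$ through $\Low$. No further complications are anticipated.
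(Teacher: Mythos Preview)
Your proof is correct and follows essentially the same argument as the paper: unfold $\delta_Y^\vee=\delta_Y\circ\Low$ for $(2)\Rightarrow(1)$, and use the jump operator property of $\Low$ from Lemma~\ref{lem:lim-jump} to absorb the outer computable $K$ for $(1)\Rightarrow(2)$. Your closing remark about why ordinary $\leqW$ would not suffice is also apt and matches the paper's observation after Proposition~\ref{prop:low-composition}.
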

\begin{proof}
We consider the represented spaces $(X,\delta_X)$ and $(Y,\delta_Y)$.
If $f:\In X\mto Y$ is low computable, then there is a computable
realizer $F$ such that $\delta_Y^\vee\circ F(p)\in f\delta_X(p)$ for all
$p\in\dom(f\delta_X)$. Since $\delta_Y^\vee\circ F=\delta_Y\circ\Low\circ F$,
this means that $\Low\circ F$ is a $(\delta_X,\delta_Y)$--realizer
of $f$ and hence $f\leqSW\Low$.
If, on the other hand, $f\leqSW\Low$, then there are computable functions
$H,K$ such that $F=H\Low K$ is a $(\delta_X,\delta_Y)$--realizer of $f$.
By Lemma~\ref{lem:lim-jump} there is a computable function $L$ such that $H\Low=\Low L$
and hence $F=\Low LK$ and $LK$ is a $(\delta_X,\delta_Y^\vee)$--realizer of $f$.
\end{proof}

Next we want to show that certain choice principles are cylinders.
We recall that a multi-valued map $f$ on represented spaces is called a {\em cylinder},
if $\id\times f\leqSW f$. For cylinders $f$ we have $g\leqSW f\iff g\leqW f$ (see \cite{BG09a}).
It has already been proved in \cite{BG09a} that $\C_\Cantor$ is a cylinder, here we present
another proof that can be directly transferred to $\C_{\IN\times\Cantor}$.

\begin{proposition}
\label{prop:cylinder}
$\C_{\Cantor}$ and $\C_{\IN\times\Cantor}$ are cylinders.
\end{proposition}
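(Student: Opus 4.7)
The plan is to establish a strong Weihrauch reduction $\id\times\C_\Cantor\leqSW\C_\Cantor$ by encoding the free Baire-space parameter directly into the choice instance that is fed to the oracle; the argument for $\C_{\IN\times\Cantor}$ will then be obtained by the same device. First I would fix a computable embedding $e:\Baire\to\Cantor$, for instance $e(n_0n_1n_2\ldots):=0^{n_0}10^{n_1}10^{n_2}1\ldots$. This map is computable and injective, and its partial inverse is computable on $\range(e)$ (which consists of the sequences in $\Cantor$ having infinitely many $1$s). I would also fix once and for all a computable homeomorphism $\Cantor\times\Cantor\cong\Cantor$.

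To define the computable $H$ and $K$ witnessing $\id\times\C_\Cantor\leqSW\C_\Cantor$, on input $\langle p,q\rangle$, where $q$ is a $\psi_-^\Cantor$-name of a non-empty $A\In\Cantor$, the map $H$ outputs a $\psi_-^\Cantor$-name of $A':=\{e(p)\}\times A\In\Cantor\times\Cantor\cong\Cantor$. Such a name can be enumerated uniformly: a basic cylinder $[w]\In\Cantor\times\Cantor$ is certified to miss $A'$ as soon as either its first-component prefix becomes incompatible with the prefix of $e(p)$ already computed from $p$, or its second-component prefix is certified by the enumeration $q$ to miss $A$. Since $A$ is non-empty, so is $A'$, and hence $A'\in\dom(\C_\Cantor)$. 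The back-map $K$ sends a given point $(z,y)\in A'$ to $\langle e^{-1}(z),y\rangle=\langle p,y\rangle$, which is a valid output of $(\id\times\C_\Cantor)(p,A)$. Therefore $K\circ G\circ H$ realizes $\id\times\C_\Cantor$ whenever $G$ realizes $\C_\Cantor$, as required for a strong Weihrauch reduction.

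For $\C_{\IN\times\Cantor}$ the construction is parallel, but the encoding of $p$ is placed into the Cantor factor. Given $\langle p,q\rangle$ with $q$ a $\psi_-^{\IN\times\Cantor}$-name of a non-empty $A\In\IN\times\Cantor$, I would let $H$ produce a $\psi_-^{\IN\times\Cantor}$-name of
\[A':=\{(n,(e(p),y)):(n,y)\in A\}\]
regarded as a closed subset of $\IN\times(\Cantor\times\Cantor)\cong\IN\times\Cantor$. A box $\{n\}\times[u]\times[v]$ is certified to miss $A'$ either once $[u]$ becomes incompatible with the prefix of $e(p)$ read so far, or once $\{n\}\times[v]$ is certified by $q$ to miss $A$; both sources of certification are uniformly enumerable from $\langle p,q\rangle$. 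The back-map $K$ then sends $(n,(z,y))\in A'$ to $\langle e^{-1}(z),(n,y)\rangle$, again a valid output of the function to be reduced.

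The main thing to verify is that the enumerations described above really exhaust the complements of the sets $A'$ in each case; this reduces to the pointwise observations that $(z,y)\notin\{e(p)\}\times A$ iff $z\neq e(p)$ or $y\notin A$, and analogously for $\IN\times\Cantor$. Both disjunctions are r.e.\ in $\langle p,q\rangle$, so $H$ is computable in the negative-information representation. Combined with the obvious computability of $K$, this will yield both strong reductions $\id\times\C_\Cantor\leqSW\C_\Cantor$ and $\id\times\C_{\IN\times\Cantor}\leqSW\C_{\IN\times\Cantor}$, and hence the cylinder property for both choice principles.
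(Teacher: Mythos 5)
Your proposal is correct and follows essentially the same route as the paper: encode the free Baire parameter as a singleton inside a Cantor factor via a computable embedding, form the product with the given closed set, and use a computable pairing of $\Cantor\times\Cantor$ with $\Cantor$ to present the result as a single $\C_\Cantor$ instance. The paper presents this more modularly, as the chain $\id_\Baire\times\C_\Cantor\leqSW\C_\Cantor\times\C_\Cantor\leqSW\C_{\Cantor\times\Cantor}\leqSW\C_\Cantor$ via the strong-reduction versions of Propositions~\ref{prop:product} and \ref{prop:surjection}, whereas you unfold that chain into an explicit $H,K$ construction, but the underlying idea and the adaptation to $\C_{\IN\times\Cantor}$ are the same.
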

\begin{proof}
There is a computable embedding
\[\iota:\Baire\to\Cantor,p\mapsto 01^{p(0)+1}01^{p(1)+1}...\]
and using this embedding we get $\id_\Baire(p)=\iota^{-1}\circ\C_\Cantor\circ\inj_\Cantor\circ\iota(p)$
and hence $\id_\Baire\leqSW\C_\Cantor$.
The proofs of Propositions~\ref{prop:product} and \ref{prop:surjection} even
show strong Weihrauch reducibility. Hence, using a computable surjective pairing function
$\pi:\Cantor\to\Cantor\times\Cantor$ one obtains
\[\id_\Baire\times\C_\Cantor\leqSW\C_\Cantor\times\C_\Cantor\leqSW\C_{\Cantor\times\Cantor}\leqSW\C_\Cantor.\]
Hence $\C_{\Cantor}$ is a cylinder. The fact that $\C_{\IN\times\Cantor}$ is a cylinder can be proved analogously.
\end{proof}

Together with Propositions~\ref{prop:locally-compact-choice} and \ref{prop:cylinder}, Corollary~\ref{cor:real-choice}
and Theorems~\ref{thm:low-reals} and \ref{thm:low-computability} we obtain the following corollary.

\begin{corollary}
If $X$ is a computable $K_\sigma$--space, then $\C_X$ is low computable.
\end{corollary}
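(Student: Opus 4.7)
The plan is to chain together the reductions already assembled in the preceding results and then invoke the characterization of low computability via $\Low$. Concretely, I would argue that $\C_X \leqSW \Low$ and then apply Theorem~\ref{thm:low-computability} to conclude that $\C_X$ is low computable.

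First, since $X$ is a computable $K_\sigma$--space, Proposition~\ref{prop:locally-compact-choice} gives $\C_X \leqW \C_\IN \times \C_{\Cantor}$, and by Corollary~\ref{cor:real-choice} we have $\C_\IN \times \C_{\Cantor} \equivW \C_{\IN\times\Cantor}$. So $\C_X \leqW \C_{\IN\times\Cantor}$. Next I would upgrade this to strong Weihrauch reducibility: by Proposition~\ref{prop:cylinder}, $\C_{\IN\times\Cantor}$ is a cylinder, and for cylinders $f$ the relations $\leqW$ and $\leqSW$ coincide on their lower cone. Hence $\C_X \leqSW \C_{\IN\times\Cantor}$.

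Then Theorem~\ref{thm:low-reals} supplies $\C_{\IN\times\Cantor} \leqSW \Low$, and transitivity of $\leqSW$ yields $\C_X \leqSW \Low$. Finally, Theorem~\ref{thm:low-computability} translates this into the statement that $\C_X$ is low computable, which is exactly the claim.

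There is essentially no obstacle, since every ingredient of the argument is already in hand; the only point that requires a moment of care is the upgrade from $\leqW$ to $\leqSW$ via the cylinder property of $\C_{\IN\times\Cantor}$, so as to be able to compose with Theorem~\ref{thm:low-reals} inside $\leqSW$ and then apply Theorem~\ref{thm:low-computability}. Everything else is a direct citation.
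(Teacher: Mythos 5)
Your proof is correct and uses exactly the ingredients the paper cites for this corollary (Propositions~\ref{prop:locally-compact-choice} and \ref{prop:cylinder}, Corollary~\ref{cor:real-choice}, Theorems~\ref{thm:low-reals} and \ref{thm:low-computability}), assembled in the same order; in particular, you correctly identified the cylinder property of $\C_{\IN\times\Cantor}$ as the step needed to upgrade $\leqW$ to $\leqSW$ so that Theorem~\ref{thm:low-computability} becomes applicable.
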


This applies, in particular, to $\C_\IN$, $\C_\Cantor$ and $\C_\IR$.
We also obtain the following generalization of the non-uniform Low Basis Theorem of Jockusch
and Soare. The case $\C_\IR$ was already treated as Theorem~4.7 in \cite{BG09b}.

\begin{corollary}[Low Basis Theorem]
If $X$ is a computable $K_\sigma$--space, then any non-empty co-c.e.\ closed set $A\In X$
contains a low point.
\end{corollary}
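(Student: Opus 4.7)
The plan is to derive this non-uniform statement directly from the preceding corollary, which guarantees that $\C_X$ is low computable whenever $X$ is a computable $K_\sigma$--space. The basic idea is that a non-empty co-c.e.\ closed set $A\In X$ is, by definition, a computable point of $\AA_-(X)$, so feeding a computable $\psi_-^X$--name of $A$ into a low realizer of $\C_X$ yields a computable $\delta_X^\vee$--name of some $x\in A$; and any point with a computable $\delta_X^\vee$--name is automatically low.

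Concretely, I would first unfold the definition of low computability: by the preceding corollary there is a computable $F:\In\Baire\to\Baire$ defined on $\dom(\C_X\psi_-^X)$ such that $\delta_X^\vee F(p)\in\psi_-^X(p)$ whenever $\psi_-^X(p)$ is non-empty. Next, I would fix a non-empty co-c.e.\ closed set $A\In X$ and choose a computable $\psi_-^X$--name $p$ of $A$; then $q:=F(p)$ is computable and, using the definition $\delta_X^\vee=\delta_X\circ J^{-1}\circ\lim$, the point $x:=\delta_X(J^{-1}(\lim q))$ lies in $A$ and is represented by the $\delta_X$--name $r:=J^{-1}(\lim q)$, so that $J(r)=\lim q$.

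Finally, I would observe that $r$ is low: since $q$ is computable, $\lim q=J(r)$ is the limit of a computable sequence in $\Baire$, hence a $\dO{2}$ sequence, hence Turing reducible to $\emptyset'$ by Shoenfield's Limit Lemma (exactly as in the argument used for the Low Points Lemma earlier in the paper). Consequently $x$ admits a low $\delta_X$--name and is therefore a low point of $X$. There is no real obstacle here; the only point that warrants a line of commentary is the convention that ``$x\in X$ is low'' is to be interpreted as ``$x$ has a low $\delta_X$--name'', which is the natural generalisation to computable Polish spaces of the notion formulated for $\Baire$ and $\Cantor$ in the Low Points Lemma.
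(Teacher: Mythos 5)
Your proposal is correct and follows exactly the path the paper leaves implicit: the preceding corollary gives a low realizer $F$ of $\C_X$, feeding a computable $\psi_-^X$--name of $A$ into $F$ yields a computable $\delta_X^\vee$--name, and unfolding $\delta_X^\vee=\delta_X\circ J^{-1}\circ\lim$ shows the selected point has a low $\delta_X$--name by Shoenfield's Limit Lemma, precisely as in the paper's Low Points Lemma. Your closing remark about interpreting ``low point of $X$'' as ``point with a low $\delta_X$--name'' is the appropriate convention and matches the one used in the paper for $\Cantor$.
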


Together with Corollary~\ref{cor:choice-cantor} and Theorem~\ref{thm:choice-natural}
we obtain that the class of low computable functions contains several others.

\begin{corollary}
Any multi-valued function $f$ on represented spaces that is computable with finitely many
mind changes or weakly computable is also low computable.
\end{corollary}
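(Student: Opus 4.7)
The plan is to route both hypotheses through strong Weihrauch reducibility to $\Low$ and then invoke the characterization of low computability given by Theorem~\ref{thm:low-computability}. The hypotheses give ordinary Weihrauch reductions to $\C_\Cantor$ respectively $\C_\IN$, and the chain $\C_\Cantor\leqSW\Low$ (Corollary~\ref{corollaryccjlim}) respectively $\C_{\IN\times\Cantor}\leqSW\Low$ (Theorem~\ref{thm:low-reals}) is already strong. What needs to be bridged is the gap between $\leqW$ and $\leqSW$ for the first link of each chain.

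If $f$ is weakly computable, then $f\leqW\C_\Cantor$ by Corollary~\ref{cor:choice-cantor}. Since $\C_\Cantor$ is a cylinder by Proposition~\ref{prop:cylinder}, this upgrades automatically to $f\leqSW\C_\Cantor$. Chaining with $\C_\Cantor\leqSW\Low$ yields $f\leqSW\Low$, whence Theorem~\ref{thm:low-computability} gives that $f$ is low computable.

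If $f$ is computable with finitely many mind changes, then $f\leqW\C_\IN$ by Theorem~\ref{thm:choice-natural}. The computable map $\AA_-(\IN)\to\AA_-(\IN\times\Cantor),\ A\mapsto A\times\Cantor$ together with first-projection produces a reduction $\C_\IN\leqW\C_{\IN\times\Cantor}$ (alternatively, this follows from $\C_\IN\leqW\C_\IN\times\C_\Cantor$ and Proposition~\ref{prop:product}), so $f\leqW\C_{\IN\times\Cantor}$. Since $\C_{\IN\times\Cantor}$ is a cylinder by Proposition~\ref{prop:cylinder}, this upgrades to $f\leqSW\C_{\IN\times\Cantor}$, and then $\C_{\IN\times\Cantor}\leqSW\Low$ delivers $f\leqSW\Low$. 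Theorem~\ref{thm:low-computability} again concludes that $f$ is low computable.

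The only non-bookkeeping step is the upgrade from $\leqW$ to $\leqSW$; this is handled uniformly by funnelling the reduction through one of the two cylinders $\C_\Cantor$, $\C_{\IN\times\Cantor}$ already isolated in Proposition~\ref{prop:cylinder}, and no further analysis of the operator $\Low$ itself is required.
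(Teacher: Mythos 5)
Your proof is correct and follows essentially the route the paper intends: translate the two hypotheses into $f\leqW\C_\Cantor$ and $f\leqW\C_\IN\leqW\C_{\IN\times\Cantor}$ via Corollary~\ref{cor:choice-cantor} and Theorem~\ref{thm:choice-natural}, upgrade $\leqW$ to $\leqSW$ through the cylinders of Proposition~\ref{prop:cylinder}, chain with $\C_\Cantor\leqSW\Low$ and $\C_{\IN\times\Cantor}\leqSW\Low$, and conclude by Theorem~\ref{thm:low-computability}. Your explicit handling of the $\leqW$-to-$\leqSW$ upgrade is exactly the point the paper leaves implicit, and you correctly recognize that the mind-changes case must be routed through the cylinder $\C_{\IN\times\Cantor}$ rather than $\C_\IN$ directly.
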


We mention that one gets consequences as the following.

\begin{corollary}
The Brouwer Fixed Point Theorem $\BFT$ is low computable.
\end{corollary}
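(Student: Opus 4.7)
The plan is to reduce $\BFT$ to $\C_{[0,1]^n}$ and then invoke the immediately preceding corollary, noting that $[0,1]^n$ is a computably compact computable metric space and in particular a computable $K_\sigma$-space. Recall that $\BFT$ sends a continuous self-map $f:[0,1]^n\to[0,1]^n$ (given by a standard representation of $\CC([0,1]^n,[0,1]^n)$) to its non-empty set of fixed points $\mathrm{Fix}(f):=\{x\in[0,1]^n:f(x)=x\}$.

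The core technical step is to show that the map $f\mapsto\mathrm{Fix}(f)$ is computable as a function from $\CC([0,1]^n,[0,1]^n)$ to $\AA_-([0,1]^n)$. The complement $[0,1]^n\setminus\mathrm{Fix}(f)$ consists of those $x$ with $f(x)\neq x$; for each such $x$, continuity of $f$ yields rational open balls $U\ni x$ and $V\ni f(x)$ with $\overline{U}\cap\overline{V}=\emptyset$ and $f(U)\subseteq V$, which certifies $\overline{U}\cap\mathrm{Fix}(f)=\emptyset$. Given a name of $f$, all such rational pairs $(U,V)$ can be effectively enumerated, and this enumeration yields a $\psi_-^{[0,1]^n}$-name of $\mathrm{Fix}(f)$. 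Since Brouwer's classical theorem guarantees $\mathrm{Fix}(f)\neq\emptyset$, we obtain $\BFT=\C_{[0,1]^n}\circ\mathrm{Fix}$, and hence $\BFT\leqW\C_{[0,1]^n}$.

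It remains to transfer low computability from $\C_{[0,1]^n}$ to $\BFT$. By the corollary immediately preceding the statement, $\C_{[0,1]^n}$ is low computable, so by Theorem~\ref{thm:low-computability} we have $\C_{[0,1]^n}\leqSW\Low$. Moreover $\C_{[0,1]^n}\equivW\C_\Cantor$ for $n\geq 1$ by the compact-choice results of Section~4, and $\C_\Cantor$ is a cylinder by Proposition~\ref{prop:cylinder}, so that $\leqW$ and $\leqSW$ coincide when reducing to it. Chaining these reductions yields $\BFT\leqSW\C_\Cantor\leqSW\Low$, and therefore $\BFT$ is low computable by Theorem~\ref{thm:low-computability}. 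The main subtlety is the verification in the first step that $\mathrm{Fix}$ is genuinely computable as a closed-set-valued map; once this is granted, the remainder is a direct application of results already proved in this section.
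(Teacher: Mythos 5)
Your proof is correct and follows essentially the same route as the paper, with one difference in presentation: the paper simply cites \cite{BG09b} for the fact that $\BFT$ is weakly computable (i.e.\ $\BFT\leqW\C_\Cantor$) and then applies the corollary stating that weakly computable functions are low computable, whereas you re-derive the reduction $\BFT\leqW\C_{[0,1]^n}\equivW\C_\Cantor$ from scratch by proving that $\mathrm{Fix}:\CC([0,1]^n,[0,1]^n)\to\AA_-([0,1]^n)$ is computable. This makes your argument more self-contained, and the subsequent chain ($\C_\Cantor$ is a cylinder, $\C_\Cantor\leqSW\Low$, conclude via Theorem~\ref{thm:low-computability}) just unpacks what the paper's weakly-computable-implies-low-computable corollary already encapsulates. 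One small technical point in your computability argument for $\mathrm{Fix}$: the condition you enumerate should be $f(\overline{U})\subseteq V$ rather than $f(U)\subseteq V$, since only the former is semi-decidable (the image of a computably compact set under $f$ is computably compact, and containment of a compact set in an open set is c.e., whereas $f(U)\subseteq V$ for $U$ open is not directly a c.e.\ property). This does not affect the correctness of the witness set emitted — $f(\overline{U})\subseteq V$ together with $\overline{U}\cap\overline{V}=\emptyset$ still certifies $\overline{U}\cap\mathrm{Fix}(f)=\emptyset$, and any $x$ with $f(x)\neq x$ is covered by some such $U$ by continuity — so the fix is routine.
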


Here, $\BFT:\CC([0,1]^n,[0,1]^n)\mto[0,1]^n$ is the multi-valued map with
$\BFT(f):=\{x\in[0,1]^n:f(x)=x\}$. In \cite{BG09b} it was already proved that any
computable function $f:[0,1]^n\to[0,1]^n$ has a low fixed point and that the
Brouwer Fixed Point Theorem is weakly computable. The above property
is a uniform version of the former fact.
The benefit of having uniform results is highlighted by the following result.

An interesting property of the class of low computable functions is that
if they are composed with limit computable functions from the left, then
one obtains a limit computable function again. This is in contrast to the
fact that the limit computable functions themselves are not closed under composition.

\begin{proposition}[Composition]
\label{prop:low-composition}
Let $f:\In X\mto Y$ and $g:\In Y\mto Z$ be multi-valued functions on represented spaces.
If $f$ is low computable and $g$ is limit computable, then $g\circ f$ is limit computable.
If $f$ and $g$ are both low computable, then $g\circ f$ is low computable.
\end{proposition}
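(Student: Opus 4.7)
The plan is to use the characterizations of low and limit computability via Theorem~\ref{thm:low-computability} and Corollary~\ref{cor:parallel-choice-natural}. Since $\lim$ is a cylinder (being a parallelization), $g\leqW\lim$ coincides with $g\leqSW\lim$, and combining this with Lemma~\ref{lem:lim-jump} one can absorb the outer computable part into $\lim$ and obtain a realizer for $g$ of the form $\lim\circ G$ with $G$ computable; analogously $f$ will have a realizer $\Low\circ F=J^{-1}\circ\lim\circ F$ with $F$ computable. The composition $g\circ f$ is then realized by
\[p\mapsto\lim G\bigl(J^{-1}(\lim F(p))\bigr),\]
and to prove limit computability I will exhibit a computable $H$ with $\lim H(p)$ equal to this expression.

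The key step will be to show that the partial function $K:\In\Baire\to\Baire$ defined by $K(r):=\lim G(J^{-1}(r))$ is Type-2 computable on its natural domain. From $r$ one first computes $y:=J^{-1}(r)$ and then the sequence $G(y)$ coordinatewise, using computability of $J^{-1}$ and $G$, so the only issue is extracting the pointwise limit $\lim G(y)$. In principle this requires access to the jump of $y$, but here $r=J(y)$ \emph{is} that jump: the $\pO{1}(y)$-statement ``$G(y)(\langle i,n\rangle)=v$ for all $i\geq N_0$'' is equivalent, via the standard enumeration $(U_n)_{n\in\IN}$ used in defining $J$, to $r(m)=0$ at a computable index $m=m(v,N_0,n)$. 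A Type-2 algorithm that dovetails over candidates $(v,N_0)$ and consults the corresponding bit of $r$ therefore outputs each coordinate of $\lim G(y)$. I expect this uniformization, turning the non-uniform fact that $y$ is low into an explicit Type-2 computation from $y'=r$, to be the main obstacle of the proof.

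Once $K$ is computable, the jump operator property of $\lim$ (Lemma~\ref{lem:lim-jump}) will supply a computable $M$ with $K\circ\lim=\lim\circ M$. Evaluating both sides at $F(p)$ yields
\[\lim G\bigl(J^{-1}(\lim F(p))\bigr)=K(\lim F(p))=\lim(M\circ F)(p),\]
so setting $H:=M\circ F$ gives the desired computable realizer and establishes the first claim.

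For the second claim, if $g$ is itself low computable then its realizer has the form $\Low\circ G=J^{-1}\circ\lim\circ G$ with $G$ computable, and the composition realizer becomes $p\mapsto J^{-1}(\lim G(J^{-1}(\lim F(p))))$. Rerunning the construction above with this $G$ will produce the same computable $H=M\circ F$ satisfying $\lim H(p)=\lim G(J^{-1}(\lim F(p)))$, and post-composing with $J^{-1}$ then identifies the composition realizer with $\Low\circ H$, proving that $g\circ f$ is low computable.
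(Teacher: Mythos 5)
Your proof is correct, and it takes a genuinely different route than the paper's. The paper's proof is a four-line argument that black-boxes the Galois connection between the operators $\int$ and $'$ on representations (cited from \cite{Bra07x}): limit computability of $g$ is translated to $(\int\delta_Y,\delta_Z)$--computability, low computability of $f$ to $(\int\delta_X,\int\delta_Y)$--computability, and composing and translating back immediately gives the claim. What you do instead is unpack the Galois connection and re-derive the one piece of it that matters here: your argument that $K(r)=\lim G(J^{-1}(r))$ is Type-2 computable on its natural domain (by consulting the bit $r(m)$ to answer the $\pO{1}(J^{-1}(r))$-question of when the output has stabilised) is precisely the content of the implication ``$(\delta_Y,\delta_Z')$--computable $\Rightarrow$ $(\int\delta_Y,\delta_Z)$--computable'', while the jump-operator property of $\lim$ from Lemma~\ref{lem:lim-jump} plays the role of the reverse implication ``$(\int\delta_X,\delta_Z)$--computable $\Rightarrow$ $(\delta_X,\delta_Z')$--computable'' when you fold $K\circ\lim F$ back into $\lim\circ(M\circ F)$. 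So the two proofs are the same idea at different levels of abstraction: the paper's is shorter and cleaner but requires citing the adjunction, yours is self-contained and makes the computational content explicit. A couple of small points you would want to tighten in a final write-up: one must specify that the $\pO{1}$-predicate is formed from the machine semantics of $G$ (so that its complement is $\Sigma^0_1$ even when $G$ is partial, though on $\dom(J^{-1})\cap G^{-1}(\dom(\lim))$ the two readings agree); and the phrase ``the same computable $H=M\circ F$'' in the second part is loose, since $G$, and hence $M$ and $H$, change when $g$ is low rather than limit computable --- the point is just that the construction again yields some computable $H$ with $\lim H(p)=\lim G(J^{-1}(\lim F(p)))$, after which postcomposing with $J^{-1}$ exhibits $\Low\circ H$ as a realizer.
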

\begin{proof}
We use the represented spaces $(X,\delta_X)$, $(Y,\delta_Y)$ and $(Z,\delta_Z)$.
We exploit the fact that integral and derivative of representations form a Galois connection (see \cite{Bra07x}).
That $g$ is limit computable means that it is $(\delta_Y,\delta_Z')$--computable,
which is equivalent to $g$ being $(\int\delta_Y,\delta_Z)$--computable
and that $f$ is low computable means that it is $(\delta_X,\delta_Y^\vee)$--computable,
which is equivalent to $f$ being $(\int\delta_X,\int\delta_Y)$--computable.
It follows that $g\circ f$ is $(\int\delta_X,\delta_Z)$--computable, which
is equivalent to $g\circ f$ being limit computable.
Analogously, if $f$ and $g$ are both low computable, then it follows that
$g\circ f$ is $(\int\delta_X,\int\delta_Z)$--computable, which is equivalent
to $g\circ f$ being low computable.
\end{proof}

It can easily be seen that the composition $g\circ f$ of a limit computable $f$ even with a
$g$ that is computable with finitely many mind changes is not necessarily limit computable.
The class of low computable functions is the largest known class with the stability property expressed
in Proposition~\ref{prop:low-composition}.

We note that $\LL$ cannot be closed under composition by Theorem~\ref{thm:L-not-idempotent}
and Proposition~\ref{prop:composition-idempotency}. Hence strict Weihrauch reducibility cannot be replaced by ordinary
Weihrauch reducibility in Theorem~\ref{thm:low-computability}.

\section{The Jump Topology}

Connecting to the results of Section \ref{sec:non-determinism}, it seems reasonable to inquire whether other
interesting Weihrauch degrees can be characterized by restrictions of the limit operation $\lim$ of Baire space $\IN^\IN$.
Since all such restrictions are single-valued, neither $\C_\Cantor$ nor $\C_\mathbb{R}$ can be equivalent to such an
operation, as their Weihrauch degrees do not contain any single-valued functions by Corollaries~\ref{cor:quotient-cantor}
and \ref{cor:quotient-reals}.
In the remainder of this section, we will study the limit operator $\lim_J$ with respect to the
initial topology of the jump $J$. Like $\lim_\Delta$ we will consider this operation as
an operation with respect to Baire space (with the identity as standard representation).

Initially, we suspected that $\lim_J$ might be equivalent to $\Low=J^{-1} \circ \lim$,
but this is only true topologically, as we will show in Theorem~\ref{thm:jump-low-top}.
Computationally, the contrary result is given below (see Theorem~\ref{thm:cantor-jump}).
It turned out that the initial topology of the jump is identical to the $\Pi$--topology studied by Miller \cite[Chapter IV]{Mil02a}.

\begin{theorem}
\label{thm:pi-topology}
The initial topology of $J$ is generated by the co-c.e.\ closed sets (that is identical to the $\Pi$--topology).
\end{theorem}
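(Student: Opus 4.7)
The plan is to identify an explicit subbase for the initial topology of $J$, identify a base for the $\Pi$-topology, and verify that each subbase generates the other's topology.

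First, I would unfold the definition. Since the basic open sets of Baire space are the cylinders $[w]$ for $w\in\IN^{<\omega}$, the initial topology of $J$ has as subbase the preimages $J^{-1}([w])$. Because $J$ takes values in $\{0,1\}^\IN$, only words $w\in\{0,1\}^{<\omega}$ yield nonempty preimages, and the definition of $J$ gives
\[J^{-1}([w])=\bigcap_{i<|w|,\;w(i)=1}U_i\;\cap\;\bigcap_{i<|w|,\;w(i)=0}(\Baire\setminus U_i).\]
Taking $w$ of length $n+1$ that records a single bit, we see that each $U_n$ and each complement $U_n^{\rm c}$ is subbasic open, and conversely every $J^{-1}([w])$ is a finite intersection of such sets. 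So the family $\mathcal{S}:=\{U_n:n\in\IN\}\cup\{U_n^{\rm c}:n\in\IN\}$ is a subbase for the initial topology of $J$.

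Second, since $(U_n)_{n\in\IN}$ enumerates all c.e.\ open subsets of $\Baire$, the co-c.e.\ closed sets are exactly the $U_n^{\rm c}$. Moreover, they are closed under finite intersections, so they form a base for the $\Pi$-topology; open sets in the $\Pi$-topology are arbitrary unions of co-c.e.\ closed sets.

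Third, I would prove the two topologies coincide by showing each subbase lies in the other topology. For one inclusion, every $U_n^{\rm c}$ is by definition $\Pi^0_1$, hence open in the $\Pi$-topology, and the $U_n\in\mathcal{S}$ are c.e.\ open, i.e.\ effective unions of cylinders $[w]$; since each cylinder $[w]$ with $w\in\IN^{<\omega}$ is clopen (its complement is a finite union of cylinders, hence c.e.\ open) it is in particular $\Pi^0_1$, so $U_n$ is a union of $\Pi^0_1$ sets and thus open in the $\Pi$-topology. For the other inclusion, every basic $\Pi^0_1$ set is precisely some $U_n^{\rm c}\in\mathcal{S}$, hence open in the initial topology of $J$. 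Combining these, the two topologies share a subbase and therefore agree.

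The content is essentially bookkeeping; the only point requiring mild care is the observation that cylinders in Baire space are $\Pi^0_1$, which allows c.e.\ open sets to be rewritten as unions of basic open sets of the $\Pi$-topology. Once that is in place, both inclusions are direct from the definition of $J$ and of the standard enumeration $(U_n)_{n\in\IN}$.
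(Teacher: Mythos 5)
Your proof is correct and takes essentially the same approach as the paper: both hinge on the preimage computation $J^{-1}([w])=\bigcap_{i<|w|,\,w(i)=1}U_i\cap\bigcap_{j<|w|,\,w(j)=0}U_j^{\rm c}$ and on the observation that cylinders of Baire space are co-c.e.\ closed, with you organizing the comparison by exhibiting the common subbase $\{U_n\}\cup\{U_n^{\rm c}\}$ instead of phrasing one inclusion as continuity of $J$. One small slip to note: the complement of a cylinder $[w]$ in $\Baire$ is a countably infinite (not finite) union of cylinders, but it is nonetheless c.e.\ open, so your conclusion that $[w]$ is $\pO{1}$ and hence $\Pi$--open still holds.
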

\begin{proof}
As every basic set of the form $w\mathbb{N}^\mathbb{N}$ for some finite $w$ is co-c.e.\ closed,
every set that is open in the ordinary Baire topology is also open in the $\Pi$--topology.
Now consider the preimage:
$$J^{-1}(w\mathbb{N}^\mathbb{N}) = \left ( \bigcap \limits_{i < |w|, w(i) = 1} U_i \right ) \cap \left ( \bigcap \limits_{j < |w|, w(j) = 0} U_j^{\rm c} \right ).$$
In the $\Pi$--topology, this is an intersection of finitely many open sets, and therefore open.
As the Baire topology is generated by sets of the form $w\mathbb{N}^\mathbb{N}$,
this shows that the jump $J$ is continuous with the $\Pi$--topology on its domain and the Baire topology on its codomain.
This is equivalent to the inclusion of the initial topology of $J$ in the $\Pi$--topology.

For the other inclusion, fix some co-c.e.\ closed set $U_n^{\rm c}$.
We have
$$U_n^{\rm c} = \bigcup \limits_{w \in \mathbb{N}^{n}} J^{-1}(w0\mathbb{N}^\mathbb{N}),$$
so $U_n^{\rm c}$ is open in the initial topology of the jump. This concludes the proof.
\end{proof}

A sequence $(p_n)_{n \in \mathbb{N}}$ in $\IN^\IN$ converges to $p\in\IN^\IN$ regarding the
$\Pi$--topology, if $(J(p_n))_{n \in \mathbb{N}}$ converges to $J(p)$ in Baire space.
The limit value $p$ cannot be left out here:
There is a sequence $(p_n)_{n \in \mathbb{N}}$, so that $(J(p_n))_{n \in \mathbb{N}}$ converges
in Baire space, but not to some element of the range of $J$, as the range of $J$ is not closed in Baire space.
The above description of the convergence relation of the $\Pi$--topology implies
\[\lim\nolimits_J=J^{-1}\circ\lim\circ J^\IN=\Low\circ J^\IN,\]
with $J^\IN\langle p_0,p_1,p_2,...\rangle:=\langle J(p_0),J(p_1),J(p_2),...\rangle$.

In order to understand the computability aspects of $\lim_J$, we would like to know
which points are limits of computable sequences with respect to the $\Pi$--topology.
We introduce a name for these points.

\begin{definition}
A point $p\in\IN^\IN$ is called {\em limit computable in the jump}, if there is a computable
sequence $(p_n)_{n\in\IN}$ in $\IN^\IN$ such that $\lim_{n\to\infty}J(p_n)=J(p)$.
\end{definition}

Here the limit is understood with respect to the ordinary Baire topology and by continuity of $J^{-1}$
we automatically obtain $\lim_{n\to\infty} p_n=p$.
Some necessary properties of points $p$ that are limit computable in the jump are clear.
For one, they are limit computable and secondly they are in the closure of the set
of computable points with respect to the $\Pi$--topology. These points are called
{\em unavoidable} following Kalantari and Welch (see \cite{KW03} and \cite{Mil02a}).

Another observation is that all limit computable $1$-generics are limit computable in the jump.
We recall that a point $p\in\IN^\IN$ is called {\em $1$--generic}, if for all $n\in\IN$ there
exists a finite word $w\prefix p$ such that either $w\IN^\IN\In U_n$ or $w\IN^\IN\cap U_n=\emptyset$
(see \cite{Nie09}).
Here $(U_n)_{n\in\IN}$ denotes the computable standard enumeration of all c.e.\ open subsets of Baire space
that was used to define the Turing jump $J$.
The definition directly implies the following observation.

\begin{lemma}
\label{lem:1-generic}
The Turing jump operator $J:\IN^\IN\to\IN^\IN$ is continuous in $p\in\IN^\IN$
if and only if $p$ is $1$--generic.
\end{lemma}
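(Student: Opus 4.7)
The plan is to unwind the definition of continuity of $J$ at a point $p$ in terms of basic neighborhoods and then match it syntactically with the definition of $1$--genericity. The main observation is that basic open neighborhoods of $J(p)$ in Baire space are the cylinders $v\IN^\IN$ with $v$ a prefix of $J(p)$, and that each bit of $J(q)$ depends only on whether $q$ lies in a fixed c.e.\ open set $U_i$.

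First I would rewrite continuity of $J$ at $p$ in concrete form: $J$ is continuous at $p$ if and only if for every $n\in\IN$ there is a prefix $w\prefix p$ such that for all $q\in w\IN^\IN$ and all $i\leq n$ one has $J(q)(i)=J(p)(i)$. Reading off the definition of $J$, this last condition splits according to the value of the $i$th bit: if $p\in U_i$ then we need $w\IN^\IN\In U_i$, while if $p\notin U_i$ then we need $w\IN^\IN\cap U_i=\emptyset$. Hence $J$ is continuous at $p$ exactly when for every $n\in\IN$ there is a common prefix $w\prefix p$ such that each $i\leq n$ satisfies $w\IN^\IN\In U_i$ or $w\IN^\IN\cap U_i=\emptyset$.

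For the direction from $1$--genericity to continuity, I would use that prefixes of $p$ are linearly ordered by $\prefix$: given $n$, I pick for each $i\leq n$ a prefix $w_i\prefix p$ witnessing the $1$--generic condition for $U_i$ and take $w$ to be the longest among $w_0,\ldots,w_n$. Then $w\prefix p$ extends each $w_i$, and the two conditions $w_i\IN^\IN\In U_i$ and $w_i\IN^\IN\cap U_i=\emptyset$ are both inherited by $w$, giving continuity at $p$ by the reformulation above. For the reverse direction, continuity at $p$ applied to a single $n$ immediately produces a prefix $w\prefix p$ that works for the index $n$ alone, which is exactly the definition of $1$--genericity at that $n$.

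There is no real obstacle beyond bookkeeping: the entire statement is a definitional unfolding, and the only subtle point is the passage from many prefixes (one per index) to a common one, which is handled by the linear order on prefixes of $p$ together with the monotonicity of the conditions $w\IN^\IN\In U_i$ and $w\IN^\IN\cap U_i=\emptyset$ under extension of $w$.
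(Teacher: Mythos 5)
Your proof is correct, and it takes the same (indeed the only natural) route as the paper, which simply asserts that the lemma ``directly follows'' from the definition of $1$--genericity; you supply the definitional unfolding—reducing continuity at $p$ to the bitwise condition on $J$, then using monotonicity of $w\IN^\IN\In U_i$ and $w\IN^\IN\cap U_i=\emptyset$ under prefix extension to combine witnesses—that the paper leaves implicit.
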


Using this lemma, we obtain the following sufficient condition for limit computability in the jump.

\begin{proposition}
\label{prop:jump-generic}
If $p$ is $1$--generic and limit computable, then $p$ is limit computable in the jump.
\end{proposition}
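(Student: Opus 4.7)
The plan is to observe that the hypothesis of $1$-genericity is precisely tailored, via Lemma~\ref{lem:1-generic}, to yield continuity of the Turing jump operator $J$ at the point $p$, which is exactly what is needed to transport a convergent sequence under $J$.

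Concretely, I would proceed as follows. Since $p$ is limit computable, fix a computable sequence $(p_n)_{n\in\IN}$ in $\Baire$ with $\lim_{n\to\infty} p_n = p$, where the limit is taken in the ordinary Baire topology. Since $p$ is $1$-generic, Lemma~\ref{lem:1-generic} tells us that $J:\Baire\to\Baire$ is continuous at $p$ in the Baire topology. Continuity at a point implies sequential continuity at that point (Baire space being metrizable), so from $p_n \to p$ we conclude $J(p_n)\to J(p)$ in $\Baire$. The very same computable sequence $(p_n)_{n\in\IN}$ that witnesses limit computability of $p$ thus also witnesses that $p$ is limit computable in the jump, by definition.

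There is no real obstacle here: once the key lemma characterizing continuity points of $J$ as the $1$-generics is available, the argument is essentially a one-line invocation of sequential continuity, and no separate verification of computability of the sequence is required since we simply reuse the one given by the limit computability hypothesis.
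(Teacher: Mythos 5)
Your proposal is correct and follows essentially the same route as the paper's own proof: fix the computable sequence witnessing limit computability, invoke Lemma~\ref{lem:1-generic} to get continuity of $J$ at $p$, and conclude by sequential continuity. The paper compresses this into two sentences, but the underlying argument is identical.
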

\begin{proof}
If $p$ is limit computable, then there is a computable sequence $(p_n)_{n\in\IN}$ that converges to $p$.
If $p$ is $1$--generic, then $(J(p_n))_{n\in\IN}$ also converges to $J(p)$ according to Lemma~\ref{lem:1-generic}.
This means that $p$ is limit computable in the jump.
\end{proof}

It is known that there is a $1$--generic and limit computable $p\in\IN^\IN$ (see Theorem~1.8.52 in \cite{Nie09}).
Moreover, a $1$--generic cannot be computable (see for instance Proposition~XI.2.3 in \cite{Odi99}).
Hence, it follows that $\lim_J$ maps some computable input to a non-computable output
and hence it is not non-uniformly computable.

It will follow from Proposition~\ref{prop:jump-cone} below that points which are non-computable and limit computable in the jump
are not necessarily $1$--generic. However, they seem to share a lot of properties with the class of
limit computable $1$--generics. As one such property we prove that points which are limit computable in the jump
do not bound diagonally non-computable functions.
A total function $f:\IN\to\IN$ is called {\em diagonally non-computable}
if $f(i)\not=\varphi_i(i)$ for all $i\in\IN$ (that means either $\varphi_i(i)$ does not exist or otherwise the two values are not equal).
Here $\varphi$ denotes some standard G\"odel numbering of the partial computable functions $g:\In\IN\to\IN$.
Diagonally non-computable functions are, in particular, not computable.
As we will show below, our following proposition is related to the known result that
$1$--generics do not bound diagonally non-computable functions (due to Demuth and Ku\v{c}era, see Corollary~9 in \cite{DK87}).
The proof is inspired by Nies (see Exercise~4.1.6 in \cite{Nie09}).

\begin{proposition}
\label{prop:dnr-jump}
Let $f$ be diagonally non-computable and let $p$ be limit computable in the jump.
Then $f\nleqT p$.
\end{proposition}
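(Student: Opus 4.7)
Plan. Assume towards a contradiction that $f\leqT p$, witnessed by a Turing functional $\Phi$ with $\Phi^p=f$; let $(p_n)_{n\in\IN}$ be a computable sequence with $J(p_n)\to J(p)$ in Baire space. First I would observe that for every $i$ the set $V_{i,v}=\{q:\Phi^q(i)\downarrow=v\}$ is c.e.\ open, say $V_{i,v}=U_{h(i,v)}$ for a computable $h$; since $p\in V_{i,f(i)}$ we have $J(p)(h(i,f(i)))=1$, and the coordinate-wise convergence $J(p_n)\to J(p)$ forces $p_n\in V_{i,f(i)}$ for all sufficiently large $n$. Equivalently, by Theorem~\ref{thm:pi-topology} every c.e.\ open neighborhood of $p$ contains cofinitely many $p_n$, so $\Phi^{p_n}(i)\to f(i)$ for each~$i$.

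Next, I would exploit the partial DNC structure via the set
\[C = \{q\in\Baire: \forall i, v.\ \neg(\Phi^q(i)\downarrow = v \AND \varphi_i(i)\downarrow = v)\},\]
which is $\Pi^0_1$ and contains $p$ because $f$ is DNC. Since $C$ is $\Pi^0_1$, it is $\Pi$-open by Theorem~\ref{thm:pi-topology}, so there is $N$ with $p_n\in C$ for every $n\geq N$. By the $s$-$m$-$n$ theorem, fix a computable $s$ with $\varphi_{s(n)}=\Phi^{p_n}$; for $n\geq N$ the self-diagonal argument forces $\Phi^{p_n}(s(n))\uparrow$, since otherwise the common value $\Phi^{p_n}(s(n))=\varphi_{s(n)}(s(n))$ would violate the partial DNC condition at index $s(n)$.

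Finally, I would invoke Kleene's recursion theorem applied to $s$ to obtain $e$ with $\varphi_e=\Phi^{p_e}$, using the padding lemma to ensure $e\geq N$; then $\Phi^{p_e}(e)\uparrow$ by the previous paragraph, while the first step applied at $i=e$ gives $\Phi^{p_m}(e)=f(e)\downarrow$ for all sufficiently large~$m$. The hard part will be bridging these two statements: the recursion-theoretic stage $e$ may fall before the modulus of convergence at index $e$, so $\Phi^{p_e}(e)\uparrow$ is not immediately contradictory. Overcoming this obstacle---the essence of the Nies, Exercise~4.1.6 argument alluded to by the authors---requires a more delicate recursion-theoretic construction that weaves the fixed point together with the total computable stage function $\mu(i):=\min\{m:\Phi^{p_m}(i)\downarrow\}$ produced by the first step, in order to force the fixed point past the relevant modulus and obtain the contradiction $\Phi^{p_e}(e)\downarrow$.
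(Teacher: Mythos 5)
Your first two paragraphs set up exactly the right objects: the $\Pi^0_1$ set $C$ is precisely the complement of the c.e.\ open set the paper's proof works with, and the two key facts you extract — that $p_n\in C$ for all $n\geq N$, and that for each $i$ the value $\Phi^{p_n}(i)$ converges to $f(i)$ — are also the correct facts. The trouble starts when you route through the $s$-$m$-$n$ theorem and Kleene's recursion theorem. As you yourself diagnose, the recursion theorem hands you a fixed point $e$ with no control over whether $e$ lies past the modulus of convergence \emph{at index $e$}, so $\Phi^{p_e}(e)\uparrow$ and $\Phi^{p_m}(e)\downarrow$ for large $m$ never collide. This is not a bridgeable gap within that plan; the detour is simply the wrong move, not a matter of a "more delicate recursion-theoretic construction."

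The fix is to delete the third paragraph and instead finish directly from what you already have. Since $f$ is total and $\Phi^{p_n}(i)\downarrow$ for all sufficiently large $n$ (your first step), the function $\mu(i):=\min\{n\geq N:\Phi^{p_n}(i)\downarrow\}$ is total \emph{and computable} (dovetail over $n$ and over stages; the search terminates). Now set $g(i):=\Phi^{p_{\mu(i)}}(i)$. This $g$ is total computable, and since $p_{\mu(i)}\in C$, the defining condition of $C$ at the pair $(i,g(i))$ gives $g(i)\not=\varphi_i(i)$ whenever $\varphi_i(i)\downarrow$. So $g$ is a total computable diagonally non-computable function — a contradiction, with no recursion theorem needed. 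This is exactly the paper's argument (phrased there with a Turing machine $F_M$ and the c.e.\ open set $U=\Baire\setminus C$), so once you make this replacement you are reproducing the paper's proof.
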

\begin{proof}
Let $f$ be diagonally non-computable and let $p$ be limit computable in the jump.
Let us assume that $f\leqT p$.
Then there is a computable function $F:\In\IN^\IN\to\IN^\IN$ such that $F(p)=f$ and there
is a computable sequence $(p_n)_{n\in\IN}$ which converges to $p$ in the $\Pi$--topology.
Since $F$ is computable, there is a Turing machine $M$ that computes $F$.
Let us denote by $F_M(r)(n)$ the $n$--th symbol written by this machine $M$ upon input $r$,
irrespectively of whether $r\in\dom(F)$. Then the set
\[U:=\{r\in\IN^\IN:(\exists i\in\IN)\;(F_M(r)(i)=\varphi_i(i)\mbox{ and }i\in\dom(\varphi_i))\}\]
is c.e.\ open and since $f=F(p)$ is diagonally non-computable, it follows that $p\not\in U$.
Since $(p_n)_{n\in\IN}$ converges to $p$ in the $\Pi$--topology and the complement of $U$ is
open in the $\Pi$--topology by Theorem~\ref{thm:pi-topology},
it follows that $p_n\not\in U$ for all $n\geq m$ with some fixed $m\in\IN$.
Since $f=F(p)$ is total and $(p_n)_{n\in\IN}$ converges to $p$, there must be an $n\geq m$
for each $i\in\IN$ such that $F_M(p_n)(i)$ exists. Since $(p_n)_{n\in\IN}$ is computable,
we can even find such an $n$ effectively, i.e.\ there is a computable function $s:\IN\to\IN$ such that $F_M(p_{s(i)})(i)$
exists and $s(i)\geq m$ for all $i\in\IN$. Since $p_{s(i)}\not\in U$, we obtain
$F_M(p_{s(i)})(i)\not=\varphi_i(i)$. But that means that $g(i):=F_M(p_{s(i)})(i)$ defines a total computable
function $g:\IN\to\IN$ that is diagonally non-computable, which is a contradiction!
\end{proof}

From this result we can directly conclude that choice on Cantor space $\C_\Cantor$ is not reducible to $\lim_J$.
A function $f$ is called {\em two-valued diagonally non-computable} if it is diagonally non-computable
and $\range(f)\In\{0,1\}$. It is known that the set of all such functions is co-c.e.\ closed in Cantor space
$\{0,1\}^\IN$ (see Fact~1.8.31 in \cite{Nie09}).

\begin{theorem}
\label{thm:cantor-jump}
We obtain $\C_\Cantor\nleqW\lim_J$.
\end{theorem}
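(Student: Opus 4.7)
The plan is to derive a contradiction by feeding the hypothetical reduction the canonical co-c.e.\ closed set $A\In\Cantor$ of two-valued diagonally non-computable functions, and then invoke Proposition~\ref{prop:dnr-jump}. The set $A$ is co-c.e.\ closed and non-empty (Fact~1.8.31 in \cite{Nie09}), so it has a computable $\psi_-^\Cantor$-name, call it $p$.

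Suppose, for contradiction, that $\C_\Cantor\leqW\lim\nolimits_J$ via computable $H,K:\In\Baire\to\Baire$, so that $K\circ\langle\id,G\circ H\rangle\vdash\C_\Cantor$ for every realizer $G$ of $\lim\nolimits_J$. Since $H$ must send valid inputs to valid inputs, $H(p)\in\dom(\lim\nolimits_J)$. Set $q:=\lim\nolimits_J H(p)$. Because $H(p)$ is computable, writing $H(p)=\langle r_0,r_1,r_2,\ldots\rangle$ exhibits a uniformly computable sequence $(r_n)_{n\in\IN}$ with $\lim_{n\to\infty}J(r_n)=J(q)$, which witnesses that $q$ is limit computable in the jump. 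Since $\lim\nolimits_J$ is single-valued on its domain, every realizer $G$ must satisfy $G(H(p))=q$, so $K\langle p,q\rangle$ must itself be an element of $A$, and is, in particular, a diagonally non-computable function. On the other hand, because $p$ and $K$ are both computable, one has $K\langle p,q\rangle\leqT q$. Thus we have exhibited a diagonally non-computable function Turing-reducible to a point limit computable in the jump, in direct contradiction to Proposition~\ref{prop:dnr-jump}.

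The proof is essentially a direct application of the previous proposition, so no step should pose a serious obstacle; the only mild subtlety is the type-checking that ensures $H(p)\in\dom(\lim\nolimits_J)$. One either takes this as part of the standard convention on Weihrauch reducibility, or argues around it by choosing a realizer $G$ with $G(H(p))=\widehat{0}$ (possible since $H(p)$ would be outside the domain of $\lim\nolimits_J$ and hence unconstrained), in which case $K\langle p,\widehat{0}\rangle$ would be a \emph{computable} element of $A$, contradicting the well-known fact that $A$ contains no computable points.
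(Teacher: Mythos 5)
Your proof is correct and follows essentially the same route as the paper: both feed the computable name of the co-c.e.\ closed set of two-valued diagonally non-computable functions into the hypothetical reduction, observe that the inner computable function produces a point limit computable in the jump, and derive a contradiction with Proposition~\ref{prop:dnr-jump}. Your extra care about whether $H(p)\in\dom(\lim\nolimits_J)$ is a reasonable remark (and the fallback argument is sound, though one can also just note that taking $G=\lim\nolimits_J$ itself as a realizer already forces $H(p)$ into its domain), but it does not change the substance of the argument.
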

\begin{proof}
Let us assume to the contrary that $\C_\Cantor\leqW\lim_J$.
Then there are computable functions $H,K$ such that $H\langle p,\lim_JK(p)\rangle\in\C_\Cantor\psi_-(p)$
for all $p$ in the domain of the right-hand side.
It is known and easy to see that the set
\[A:=\{f\in\{0,1\}^\IN:\mbox{$f$ is two-valued diagonally non-computable}\}\]
is a co-c.e.\ closed set. Hence, there is a computable $p$ such that $A=\psi_-(p)$
and we obtain that $f:=H\langle p,\lim_JK(p)\rangle$ is diagonally non-computable.
Hence $K(p)$ is computable and $q:=\lim_JK(p)$ is limit computable in the jump.
Moreover, $f\leqT q$, which contradicts Proposition~\ref{prop:dnr-jump}.
\end{proof}

Next we prove that $\lim_J$ is low computable.

\begin{theorem}
\label{thm:jump-low}
We obtain $\lim_J\lSW\Low$.
\end{theorem}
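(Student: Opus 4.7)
The plan is to split the statement into the reduction $\lim_J \leqSW \Low$ and the strictness $\Low \nleqW \lim_J$ (which automatically entails $\nleqSW$).

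For the reduction, I would exploit the identity $\lim_J = \Low \circ J^\IN$ recorded just after Theorem~\ref{thm:pi-topology}. The naive realizer $K := J^\IN$ is blocked by the main obstacle: $J$ itself is not computable, since by Lemma~\ref{lem:1-generic} it is continuous precisely at the $1$--generic points. The key idea circumventing this is that $\Low$ only depends on the limit of its input in Baire space and not on the individual terms, so it suffices to build a computable $K$ such that $K(\langle p_0, p_1, \ldots\rangle)$ converges in Baire space to $J(p^*)$, where $p^* := \lim_J\langle p_0, p_1, \ldots\rangle$.

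Concretely, I would fix a standard enumeration $U_i = \bigcup_k v_{i,k}\IN^\IN$ of the c.e.\ open subsets of Baire space with words $v_{i,k}\in\IN^*$ uniformly computable in $(i,k)$, and let $K$ output $q = \langle q_0, q_1, \ldots\rangle$, where $q_n(i) := 1$ if some $v_{i,k}$ with $k \leq n$ satisfies $v_{i,k} \prefix p_n$, and $q_n(i) := 0$ otherwise. Each check uses only a finite prefix of $p_n$, so $K$ is computable. To verify convergence, fix a coordinate $i$ and note that $\Pi$--topological convergence $p_n \to p^*$ implies convergence in the ordinary Baire topology: the computable $J^{-1}$ is continuous, so $J(p_n) \to J(p^*)$ in Baire forces $p_n \to p^*$ in Baire (equivalently, by Theorem~\ref{thm:pi-topology} the $\Pi$--topology refines the Baire topology). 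If $J(p^*)(i) = 1$, pick $v_{i,k_0} \prefix p^*$; for all large $n$ one has both $n \geq k_0$ and $v_{i,k_0} \prefix p_n$, so $q_n(i) = 1$. If $J(p^*)(i) = 0$, then $p_n \notin U_i$ eventually, so no $v_{i,k}$ prefixes $p_n$ and $q_n(i) = 0$. Hence $q_n \to J(p^*)$ in Baire, and consequently $\Low(q) = J^{-1}(J(p^*)) = p^* = \lim_J\langle p_0, \ldots\rangle$. Taking the outer function in the definition of $\leqSW$ to be the identity gives $\lim_J \leqSW \Low$.

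For strictness I would combine the just-established Theorem~\ref{thm:cantor-jump}, which asserts $\C_\Cantor \nleqW \lim_J$, with Corollary~\ref{corollaryccjlim}, which gives $\C_\Cantor \leqSW \Low$. Were $\Low \leqW \lim_J$ to hold, transitivity of $\leqW$ would yield $\C_\Cantor \leqW \lim_J$, contradicting Theorem~\ref{thm:cantor-jump}. Hence $\Low \nleqW \lim_J$, and a fortiori $\Low \nleqSW \lim_J$, completing the proof of strictness.
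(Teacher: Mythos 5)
Your proof is correct and is essentially the paper's argument, with only a cosmetic indexing difference. The paper defines $q_{\langle k,m\rangle}(n) = 1$ iff $p_k \in U_n^m$, keeping the sequence index $k$ and the approximation depth $m$ as two separate coordinates and pairing them, while you diagonalize by using the single index $n$ in both roles ($q_n(i)=1$ iff $p_n \in U_i^{n+1}$ in the paper's notation). The convergence verification is the same in both cases: for $J(p^*)(i)=1$, one uses Baire convergence of the $p_n$ to lock in a witnessing prefix once the index is large enough; for $J(p^*)(i)=0$, one uses that $U_i^{\rm c}$ is $\Pi$--open, so $\Pi$--convergence forces $p_n \notin U_i$ eventually. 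The strictness argument via Corollary~\ref{corollaryccjlim} and Theorem~\ref{thm:cantor-jump} is exactly the one the paper gives (the paper states it as ``by Theorem~\ref{thm:cantor-jump} the reduction is strict'').
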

\begin{proof}
We use the computable standard enumeration $(U_n)_{n\in\IN}$ of c.e.\ open
subsets $U_n\In\IN^\IN$ that was used to define the Turing jump operator $J$.
By $U_n^m$ we denote the union of the first $m$ basic clopen balls in the union
that constitutes $U_n$.
We define a function $F:\IN^\IN\to\IN^\IN$ by
$F\langle p_0,p_1,p_2,...\rangle:=\langle q_0,q_1,q_2,...\rangle$
with
\[q_{\langle k,m\rangle}(n):=\left\{\begin{array}{ll}
  1 & \mbox{if $p_k\in U_n^m$}\\
  0 & \mbox{otherwise}
\end{array}\right.\]
Since the property $p_k\in U_n^m$ is decidable in the input sequence and the parameters $k,n,m$,
it follows that $F$ is computable. We claim that $\lim_J=\Low\circ F$.
Let $(p_k)_{k\in\IN}$ and $p$ be such that $\lim_{k\to\infty}J(p_k)=J(p)$.
Then also $\lim_{k\to\infty}p_k=p$. Let $(q_i)_{i\in\IN}$ be the corresponding
output of $F$.
Let us assume that $J(p)(n)=1$ for some $n\in\IN$, i.e.\ $p\in U_n$.
Then $p\in U_n^m$ for all sufficiently large $m$ and hence $p_k\in U_n^m$
for all sufficiently large $k,m$. This implies that $q_{\langle m,k\rangle}(n)=1$
for sufficiently large $\langle m,k\rangle$. Let us now assume that $J(p)(n)=0$,
i.e.\ $p\not\in U_n$. Since the complement of $U_n$ is co-c.e. closed and hence
open in the $\Pi$--topology, this implies that $p_k\not\in U_n$ for all
sufficiently large $k$. In particular, $p_k\not\in U_n^m$ for all $m$ and all
sufficiently large $k$. This implies that $q_{\langle m,k\rangle}(n)=0$
for all sufficiently large $\langle m,k\rangle$.
Altogether, this means $\Low\circ F\langle p_0,p_1,p_2,...\rangle=J^{-1}\circ\lim\langle q_0,q_1,q_2,...\rangle=p$,
as desired. By Theorem~\ref{thm:cantor-jump} the reduction is strict.
\end{proof}

As a corollary we obtain the following.

\begin{corollary}
All $p\in\IN^\IN$ which are limit computable in the jump are also low.
\end{corollary}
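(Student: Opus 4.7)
The plan is to derive this as a direct consequence of Theorem~\ref{thm:jump-low} together with the characterization of low points established earlier (Lemma ``Low points''). Recall that that lemma says $p\in\IN^\IN$ is low if and only if $p$ has a computable $\Low$-name, i.e.\ there is a computable $r$ with $\Low(r)=p$.

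First, I would unpack the hypothesis: if $p$ is limit computable in the jump, then by definition there is a computable sequence $(p_n)_{n\in\IN}$ in $\IN^\IN$ with $\lim_{n\to\infty}J(p_n)=J(p)$, which is precisely the statement $\lim_J\langle p_0,p_1,p_2,\ldots\rangle=p$, and the pairing $\langle p_0,p_1,p_2,\ldots\rangle$ is a computable point of Baire space since the sequence is uniformly computable.

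Next, I would invoke the concrete content of the proof of Theorem~\ref{thm:jump-low}: there we exhibited a total computable function $F:\IN^\IN\to\IN^\IN$ satisfying the pointwise identity $\lim_J=\Low\circ F$ on $\dom(\lim_J)$. Applying this to the computable input $\langle p_0,p_1,p_2,\ldots\rangle$ yields $p=\Low(F\langle p_0,p_1,p_2,\ldots\rangle)$, and since $F$ is computable, $r:=F\langle p_0,p_1,p_2,\ldots\rangle$ is a computable element of $\IN^\IN$. Thus $p$ has a computable $\Low$-name, and by the Lemma on low points $p$ is low.

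There is essentially no obstacle here since all the work has been done in Theorem~\ref{thm:jump-low}; the only thing to verify is that ``strong'' Weihrauch reducibility $\lim_J\leqSW\Low$ really does transport computable inputs to computable $\Low$-names (as opposed to computable realizers depending on an auxiliary input), which is immediate from the displayed equality $\lim_J=\Low\circ F$ with $F$ total computable that appears in that proof.
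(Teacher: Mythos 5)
Your proof is correct and takes essentially the same route the paper intends: the corollary is meant to be read off directly from the proof of Theorem~\ref{thm:jump-low}, which exhibits the factorization $\lim_J=\Low\circ F$ with $F$ total computable, so a computable $\lim_J$-name $\langle p_0,p_1,\ldots\rangle$ for $p$ yields the computable $\Low$-name $F\langle p_0,p_1,\ldots\rangle$, and the Low Points Lemma finishes. Your side remark is also well taken: an abstract argument from $\leqSW\Low$ alone would additionally need that lowness is downward closed under Turing reducibility (to absorb the outer post-processing function), which is how the paper phrases the general preservation principle elsewhere, but reading off the concrete factorization avoids that step entirely.
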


This is another property that points which are limit computable in the jump share
with limit computable $1$--generics (see Proposition~XI.2.3.2 in \cite{Odi99}).
Another straightforward observation is the following.

\begin{corollary}
We obtain $\lim\nolimits_\Delta\lW\lim\nolimits_J\lW\lim$.
\end{corollary}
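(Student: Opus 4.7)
The plan is to handle the two reductions and their strictness separately, leveraging results already in the paper.

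First, for $\lim_\Delta \leqW \lim_J$: I would observe that discrete convergence (eventual constancy) implies convergence in every topology, in particular in the $\Pi$-topology. So if $\langle p_0,p_1,\ldots\rangle$ is eventually constant with value $p$, then $\langle J(p_0),J(p_1),\ldots\rangle$ is eventually constant with value $J(p)$, and therefore $\lim_J \langle p_0,p_1,\ldots\rangle = p = \lim_\Delta \langle p_0,p_1,\ldots\rangle$. Hence the identity realizer witnesses the reduction.

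Next, for $\lim_J \leqW \lim$: By Theorem~\ref{thm:jump-low} we have $\lim_J \leqSW \Low$, and immediately after Theorem~\ref{thm:low-reals} it is noted that $\Low \lW \lim$. Hence $\lim_J \leqW \lim$ by transitivity. Alternatively, one can realize this directly from the identity $\lim_J = \Low \circ J^\IN$: apply the computable map $J^\IN$ componentwise and then invoke $\lim$ followed by the computable map $J^{-1}$.

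For the strict separation $\lim_\Delta \lW \lim_J$, suppose for contradiction that $\lim_J \leqW \lim_\Delta$. By Corollary~\ref{cor:discrete-limit} we have $\lim_\Delta \equivW \C_\IN$, and by Theorem~\ref{thm:choice-natural} any function reducible to $\C_\IN$ is computable with finitely many mind changes, hence maps computable inputs to computable outputs. However, by Proposition~\ref{prop:jump-generic}, every limit-computable $1$-generic $p$ is limit computable in the jump, and it is known that there exist $1$-generic points that are limit-computable and non-computable (cited in the paper just after Proposition~\ref{prop:jump-generic}). Thus $\lim_J$ sends some computable input to a non-computable output, a contradiction.

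Finally, for $\lim_J \lW \lim$, if $\lim \leqW \lim_J$ held, then by $\lim_J \leqSW \Low$ (Theorem~\ref{thm:jump-low}) and transitivity we would get $\lim \leqW \Low$, contradicting $\Low \lW \lim$ mentioned after Theorem~\ref{thm:low-reals}. The main ``obstacle'' is really just assembling the right earlier results; no essentially new construction is needed, since the hard work has been done in Theorems~\ref{thm:cantor-jump} and \ref{thm:jump-low} and Proposition~\ref{prop:jump-generic}.
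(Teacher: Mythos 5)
Your proposal is correct, and for the positive reductions and the first strictness your reasoning matches the paper's almost exactly: both rest on the fact that the discrete, $\Pi$, and Baire topologies are nested so that each limit operator restricts the next, and both separate $\lim_\Delta$ from $\lim_J$ by observing that $\lim_J$ can carry a computable input to a non-computable output (via a limit-computable $1$-generic), while everything below $\C_\IN\equivW\lim_\Delta$ is non-uniformly computable. Where you diverge is in the second strictness $\lim_J\lW\lim$. The paper exhibits a concrete separating degree: $\C_\Cantor\leqW\lim$ but $\C_\Cantor\nleqW\lim_J$ by Theorem~\ref{thm:cantor-jump}. You instead sandwich $\lim_J\leqSW\Low\lW\lim$ and argue by transitivity that $\lim\leqW\lim_J$ would force $\lim\leqW\Low$. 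Both are valid, and both ultimately trace back to the surrounding machinery; your route is slightly more economical in that it only needs the positive part $\lim_J\leqSW\Low$ of Theorem~\ref{thm:jump-low} together with $\Low\lW\lim$ (which the paper derives from the observation that $\lim$ produces non-low outputs on computable inputs), whereas the paper's route has the virtue of naming a specific degree, $\C_\Cantor$, that witnesses the gap. One small point of style: for $\lim_J\leqW\lim$ the simplest justification is the one the paper uses, namely that $\lim_J$ is literally a restriction of $\lim$; your detour through $\Low$ or through the identity $\lim_J=J^{-1}\circ\lim\circ J^\IN$ works but is heavier than necessary.
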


Since the corresponding topologies are included in each other in the converse order,
each limit operation in this sequence is just a restriction of the next one.
This implies the positive part of the reduction chain. The first reduction is strict, since $\lim_\Delta$ is non-uniformly
computable and $\lim_J$ is not (as observed after Proposition~\ref{prop:jump-generic}).
The second reduction is strict since $\C_\Cantor$ is reducible
to $\lim$, but not to $\lim_J$ (by Theorem~\ref{thm:cantor-jump}).

In light of Theorem~\ref{thm:jump-low} it might be surprising that topologically $\lim_J$ turns
out to be equivalent to $\Low$.

\begin{theorem}
\label{thm:jump-low-top}
We obtain $\lim_J\equivSW\Low$ with respect to some oracle.
\end{theorem}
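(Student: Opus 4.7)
Since Theorem~\ref{thm:jump-low} already yields $\lim_J \leqSW \Low$ unconditionally, the remaining task is to establish $\Low \leqSW \lim_J$ relative to some oracle. The plan is to exploit the factorization $\lim_J = \Low \circ J^\IN$ and exhibit an oracle-computable function $H \colon \IN^\IN \to \IN^\IN$ that turns every $\Low$-input $\langle p_0, p_1, \ldots\rangle$ into a $\lim_J$-input $\langle r_0, r_1, \ldots\rangle$ with $J(r_n) \to J(p)$ in Baire space; then $\lim_J(\langle r_n\rangle) = p = \Low(\langle p_n\rangle)$, and with $K := \id$ the desired strong Weihrauch reduction is complete.

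For the construction I would fix an oracle $A$ that, uniformly in a finite sequence $\sigma \in \IN^*$, computes (i) the longest prefix $\sigma^\ast \prefix \sigma$ that extends to some element of $\range(J)$, and (ii) a specific witness $r_\sigma \in \IN^\IN$ with $J(r_\sigma) \sqsupseteq \sigma^\ast$. Such an $A$ exists because for each finite $\sigma$ the question ``$\exists r \in \IN^\IN : J(r) \sqsupseteq \sigma$'' reduces to non-emptiness of a finite Boolean combination of the c.e.\ open sets $U_i$ in Baire space, a $\Sigma^1_1$ statement that is decidable in any oracle at or above the hyperjump; the witness is then produced by a greedy bit-by-bit search in the same oracle. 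Define
\[H(\langle p_0, p_1, p_2, \ldots\rangle) := \langle r_{p_0|_0},\ r_{p_1|_1},\ r_{p_2|_2},\ \ldots\rangle,\]
which is $A$-computable because each output bit depends only on a finite prefix of the input and finitely many oracle queries.

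To verify correctness on a $\Low$-input with $\lim_{n \to \infty} p_n = J(p)$, fix $k$ and choose $N_k$ so that $p_n|_k = J(p)|_k$ for all $n \geq N_k$. For $n \geq \max(k, N_k)$ the prefix $p_n|_n$ starts with $J(p)|_k$, which is consistent via the witness $p$; hence the longest consistent prefix $(p_n|_n)^\ast$ has length at least $k$ and agrees with $J(p)$ in its first $k$ bits. Consequently $J(r_{p_n|_n}) \sqsupseteq (p_n|_n)^\ast$ also agrees with $J(p)$ on the first $k$ bits, so $J(r_n)|_k = J(p)|_k$ for all sufficiently large $n$. Since $k$ was arbitrary, $J(r_n) \to J(p)$ in Baire space, which places $\langle r_0, r_1, \ldots\rangle$ in $\dom(\lim_J)$ with $\lim_J$-value $p$, as required.

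The main obstacle is a subtlety in the construction: one might naively hope to ask the oracle directly for a witness of $J(r) \sqsupseteq p_n|_n$, but the entire length-$n$ prefix $p_n|_n$ need not itself be consistent --- its bits beyond the stable portion may clash with every admissible extension of $J(p)$ --- so one must instead pass to the longest consistent prefix. The argument succeeds because every fixed-length initial segment of $p_n$ eventually stabilizes to the corresponding prefix of $J(p)$, which forces the longest consistent prefix to grow without bound while ultimately matching $J(p)$ on any prescribed number of bits.
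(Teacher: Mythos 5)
Your proof is correct and follows essentially the same route as the paper: both establish $\lim_J \leqSW \Low$ by Theorem~\ref{thm:jump-low}, and for the converse both construct an oracle that, for each finite word $\sigma$, returns a point whose jump extends the longest prefix $\sigma'$ of $\sigma$ for which the corresponding finite Boolean combination of the $U_i$'s is non-empty; the paper builds this oracle inductively via the Axiom of Choice, while you pin down a definable bound (the hyperjump), but the verification that the resulting sequence converges to $\Low(p)$ in the $\Pi$--topology is identical in substance.
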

\begin{proof}
By Theorem~\ref{thm:jump-low} it is clear that $\lim_J\leqSW\Low$. We need to show the reverse reduction
with respect to some oracle.

Let $(U_i)_{i\in\IN}$ be the standard enumeration of c.e.\ open sets used to define
the jump operator $J$. Given a finite word $w=w_0...w_n\in\IN^*$ we use the sets
\[A_{w,i}:=\left\{\begin{array}{ll}
U_i & \mbox{if $w_i\not=0$}\\
\IN^\IN\setminus U_i & \mbox{otherwise}
\end{array}\right.\]
for all $i=0,...,n$. Moreover, we set $A_w:=\bigcap_{i=0}^nA_{w,i}$. Now we define inductively
a function $f:\IN^*\to\IN^\IN$ by $f(\varepsilon):=\widehat{0}$ for the empty word $\varepsilon$
and for $w:=w_0...w_{n+1}$ we select $f(w)\in A_w$ if $A_w\not=\emptyset$ and $f(w):=f(w_0...w_n)$
otherwise. By the Axiom of Choice such a function $f$ exists and we use it as an oracle in the following.
Given a sequence $p=\langle p_0,p_1,p_2,...\rangle\in\dom(\Low)$ we let
\[F(p):=\langle q_0,q_1,q_2,...\rangle\mbox{ with }q_i:=f(p_i[i]),\]
where $p_i[j]=p_i(0)...p_i(j)$ denotes the prefix of $p_i$ of length $j+1$.
It is clear that $F$ is computable in the oracle $f$.

We claim that $\lim_JF(p)=\Low(p)$ for all $p\in\dom(\Low)$.
Given a sequence $p=\langle p_0,p_1,p_2,...\rangle\in\dom(\Low)$ it follows
that the sequence $(p_i)_{i\in\IN}$ converges to $J\Low(p)$ in the usual Baire topology.
We consider $(q_i)_{i\in\IN}$ with $q_i=f(p_i[i])$ as above. Let $n\in\IN$. Then there is an $i\geq n$
such that
\[p_j(m)=1\iff\Low(p)\in U_m\]
for all $j\geq i$ and $m\leq n$.
In this situation $A_{p_i[n]}\not=\emptyset$ since $\Low(p)\in A_{p_i[n]}$ and hence $q_j\in A_{p_i[n]}$ for all $j\geq i$
by definition of $f$. In particular,
\[q_j\in U_n\iff\Low(p)\in U_n\]
for all $j\geq i$.
This means that $(q_j)_{j\in\IN}$ converges to $\Low(p)$ in the $\Pi$--topology and hence
$\lim_JF(p)=\Low(p)$.
\end{proof}

As a last result on limit computability in the limit we prove that
this class of points is closed under total computable functions.

\begin{proposition}
\label{prop:jump-cone}
Let $p,q\in\IN^\IN$ be such that $F(p)=q$ for some total computable function $F:\IN^\IN\to\IN^\IN$.
If $p$ is limit computable in the jump, then $q$ is limit computable
in the jump too.
\end{proposition}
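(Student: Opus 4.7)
The plan is to simply take $q_n := F(p_n)$ as witnesses for the limit computability of $q$ in the jump. Since $F$ is total computable and $(p_n)_{n \in \IN}$ is a computable sequence with $\lim_{n \to \infty} J(p_n) = J(p)$, the sequence $(q_n)_{n \in \IN}$ is certainly computable. What remains is to verify that $\lim_{n \to \infty} J(F(p_n)) = J(F(p))$, i.e., that $F(p_n) \to F(p)$ in the $\Pi$-topology.

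The key step is to observe that $F$ is continuous not only with respect to the Baire topology (which is automatic since $F$ is computable), but also as a map from $(\IN^\IN, \Pi)$ to $(\IN^\IN, \Pi)$. By Theorem~\ref{thm:pi-topology}, the $\Pi$-topology is generated as a subbase by the co-c.e.\ closed sets together with the Baire-open sets. For a subbasic $\Pi$-open set of the form $\IN^\IN \setminus U_n$ (with $U_n$ a c.e.\ open set from our standard enumeration), the preimage
\[F^{-1}(\IN^\IN \setminus U_n) = \IN^\IN \setminus F^{-1}(U_n)\]
is the complement of a c.e.\ open set (since $F$ is computable and $U_n$ is c.e.\ open), hence co-c.e.\ closed and thus $\Pi$-open. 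For Baire-open subbasic sets, preimages under $F$ are Baire-open by continuity of $F$, hence also $\Pi$-open. So $F$ is $(\Pi, \Pi)$-continuous.

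Since $p_n \to p$ in the $\Pi$-topology (this is the meaning of $\lim J(p_n) = J(p)$), continuity of $F$ yields $F(p_n) \to F(p)$ in the $\Pi$-topology, i.e., $\lim_{n \to \infty} J(q_n) = J(q)$. Combined with computability of the sequence $(q_n)_{n \in \IN}$, this shows that $q$ is limit computable in the jump.

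There is no real obstacle here: the whole argument rests on the two observations that $F$ pulls c.e.\ open sets back to c.e.\ open sets (a basic fact about computable functions) and that the $\Pi$-topology has been characterized via co-c.e.\ closed sets in Theorem~\ref{thm:pi-topology}. The only conceptual subtlety worth highlighting in the write-up is that we are using the continuity of $F$ with respect to the stronger $\Pi$-topology, rather than the Baire topology, and that this stronger continuity follows directly from the computability of $F$ together with the subbasic description of the $\Pi$-topology.
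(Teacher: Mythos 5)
Your proof is correct, and it takes a genuinely different route from the paper's. The paper argues algebraically: it invokes Lemma~\ref{lem:lim-jump} (that $J^{-1}$ is a jump operator) to produce a computable $G$ with $JF = GJ$, and then uses continuity of $G$ to transport the Baire-space convergence $J(p_n)\to J(p)$ to $JF(p_n)\to JF(p)$. You instead make the topological content explicit: using Theorem~\ref{thm:pi-topology}, you show that any total computable $F$ is $(\Pi,\Pi)$-continuous --- preimages of c.e.\ open sets under a total computable map are c.e.\ open, so preimages of co-c.e.\ closed sets, which generate the $\Pi$-topology, are co-c.e.\ closed --- and then transport the $\Pi$-convergence of $(p_n)$ directly through $F$. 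The two arguments are close cousins, since a computable $G$ with $JF=GJ$ is precisely an effective witness for the $(\Pi,\Pi)$-continuity of $F$; but your version is self-contained within the section, relying only on Theorem~\ref{thm:pi-topology} rather than on the jump-operator machinery from the previous section, and it makes the underlying reason transparent: total computable maps are continuous for the $\Pi$-topology, and limit computability in the jump is just computable sequential approximability in that topology. One minor redundancy: you need not separately list the Baire-open sets as subbasic, since the basic clopen balls $w\IN^\IN$ are themselves co-c.e.\ closed, so the Baire topology is already contained in the topology generated by the co-c.e.\ closed sets alone.
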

\begin{proof}
Let $F:\IN^\IN\to\IN^\IN$ be some total computable function such that
$F(p)=q$. Hence $JF$ is limit computable and hence there exists a computable $G:\In\IN^\IN\to\IN^\IN$
such that $JF=GJ$ by Lemma~\ref{lem:lim-jump}.
If $p$ is limit computable in the jump, then there is a computable sequence
$(p_n)_{n\in\IN}$ such that $(J(p_n))_{n\in\IN}$ converges to $J(p)$. Since $G$ is continuous,
we obtain that $(GJ(p_n))_{n\in\IN}$ converges to $GJ(p)$, which implies that $(JF(p_n))_{n\in\IN}$ converges
to $JF(p)$.
Since $F$ is computable, it follows that $(F(p_n))_{n\in\IN}$ is computable and
this means that $q=F(p)$ is limit computable in the jump.
\end{proof}

From this result it follows that $p$ which are non-computable and limit computable in the jump are not
necessarily $1$--generic. For instance, for each limit computable $1$--generic $p$ we have that
$\langle\widehat{0},p\rangle$ is limit computable in the jump and
non-computable,  but it is not $1$--generic, since no finite prefix proves that it does belong to the
co-c.e.\ closed set $\{\langle\widehat{0},q\rangle:q\in\IN^\IN\}$.
So far, we have no example of a point that is limit-computable in the jump and not below
a $1$--generic with respect to truth-table reducibility. It would be useful to clarify the
relation between $1$--generics and points that are limit computable in the jump somewhat further.

The $\Pi$--topology shows further interesting behavior. If $p$ is computable in Baire space,
then it is isolated regarding the $\Pi$--topology, that is the singletons $\{p\}$ with computable $p$ are clopen.
We characterize the singletons $\{p\}$ that are clopen in the $\Pi$--topology.

\begin{lemma}
\label{lem:clopen}
Let $p\in\Baire$. Then $\{p\}$ is clopen in the $\Pi$--topology if and only if $\{p\}$ is co-c.e.\ closed in Baire space.
\end{lemma}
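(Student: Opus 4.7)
The plan is to exploit Theorem~\ref{thm:pi-topology}, which tells us that the co-c.e.\ closed sets form a subbasis for the $\Pi$--topology. This reduces the equivalence to a routine check on both sides.

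For the direction $(\Leftarrow)$, suppose $\{p\}$ is co-c.e.\ closed in Baire space. Then $\{p\}$ is a subbasic element of the $\Pi$--topology, hence open in it. On the other hand, $\IN^\IN\setminus\{p\}$ is c.e.\ open in the ordinary Baire topology (by definition of co-c.e.\ closedness), and since the Baire topology is contained in the $\Pi$--topology (as shown in the proof of Theorem~\ref{thm:pi-topology}), the complement of $\{p\}$ is also open in the $\Pi$--topology. Thus $\{p\}$ is clopen.

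For the direction $(\Rightarrow)$, suppose $\{p\}$ is open in the $\Pi$--topology. Since the co-c.e.\ closed sets form a subbasis, the basic open sets of the $\Pi$--topology are finite intersections $C_1\cap\ldots\cap C_n$ of co-c.e.\ closed sets. As $\{p\}$ is open and contains $p$, there is such a basic neighborhood $B$ of $p$ with $B\In\{p\}$; necessarily $B=\{p\}$. Hence $\{p\}=C_1\cap\ldots\cap C_n$ for some co-c.e.\ closed $C_i$. A finite intersection of co-c.e.\ closed sets is again co-c.e.\ closed, since the complement is a finite union of c.e.\ open sets, which is c.e.\ open (uniformly in names). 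Therefore $\{p\}$ is co-c.e.\ closed.

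There is no genuine obstacle here; the only thing worth noting is that we really do need Theorem~\ref{thm:pi-topology} in its precise form, namely that co-c.e.\ closed sets generate the topology as a subbasis, so that openness of $\{p\}$ gives a description of $\{p\}$ itself (and not merely of some superset) as a finite intersection of co-c.e.\ closed sets.
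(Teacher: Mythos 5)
Your proof is correct. The backward direction matches the paper's essentially, although you invoke the co-c.e.\ property also for closedness, which is unnecessary: since the $\Pi$--topology refines the Baire topology, \emph{every} singleton of Baire space is already closed in the $\Pi$--topology, regardless of effectivity; only openness needs the co-c.e.\ hypothesis. This is a cosmetic difference.

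For the forward direction you take a genuinely different (and cleaner) route than the paper. The paper's proof uses the identification of the $\Pi$--topology with the initial topology of $J$: it extracts a finite prefix $w\prefix J(p)$ with $\{p\}=J^{-1}(w\IN^\IN)$, expands this preimage via the explicit formula into an intersection of c.e.\ open sets $U_i$ and co-c.e.\ closed sets $U_j^{\rm c}$, and then has to argue (using that $\{p\}$ is a singleton) that the c.e.\ open $U_i$ can be \emph{replaced} by clopen balls, which are co-c.e.\ closed. You bypass the initial-topology detour and the replacement trick entirely by appealing directly to the subbasis characterization of the $\Pi$--topology: any open set containing $p$ contains a basic neighborhood, i.e.\ a finite intersection of co-c.e.\ closed sets, which for $\{p\}$ is forced to equal $\{p\}$ itself. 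Your approach is more elementary in that it only uses the definition of the $\Pi$--topology (topology generated by co-c.e.\ closed sets), not the equality with the initial topology of $J$ established in Theorem~\ref{thm:pi-topology}; so strictly speaking you need less than the theorem gives, and your concluding remark slightly overstates the dependence on that theorem. The paper's version trades brevity for showing concretely how $J^{-1}$ interacts with co-c.e.\ closed sets, which is thematically relevant but logically not required.
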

\begin{proof}
Since the $\Pi$--topology includes the ordinary Baire topology, it is clear that all singletons
$\{p\}$ are closed in the $\Pi$--topology.
If $\{p\}$ is co-c.e.\ closed in Baire space, then $\{p\}$ is also open in the $\Pi$--topology
(since this topology is generated by the co-c.e.\ closed sets).
Let now $\{p\}$ be open in the $\Pi$--topology.
Then there is a finite prefix $w\prefix J(p)$ such that $\{p\}=J^{-1}(w\IN^\IN)$.
Similarly to the proof of Theorem~\ref{thm:pi-topology} we obtain
$$\{p\}=J^{-1}(w\mathbb{N}^\mathbb{N}) = \left ( \bigcap \limits_{i < |w|, w(i) = 1} U_i \right ) \cap \left ( \bigcap \limits_{j < |w|, w(j) = 0} U_j^{\rm c} \right ).$$
However, in this case the open sets $U_i$ with $w(i)=1$ can be replaced by clopen balls, since
$\{p\}$ is a singleton. Altogether, this implies that $\{p\}$ can be written as a finite intersection
of co-c.e.\ closed sets and hence it is co-c.e.\ closed.
\end{proof}

It is easy to see that there are co-c.e.\ closed singletons $\{p\}$ with non-computable $p\in\IN^\IN$.
Co-c.e.\ closed singletons $\{p\}$ can even be such that $p$ is not arithmetical
(see Propositions~1.8.62 and 1.8.70 in \cite{Nie09}).
Lemma~\ref{lem:clopen} implies that the set of computable points is open in the $\Pi$--topology, although not effectively so. In general,
a set $O$ is c.e.\ open in the $\Pi$--topology, if and only if it is effectively $F_\sigma$ in
Baire space, in turn, a set $A$ is co-c.e.\ closed in the $\Pi$--topology, if and only if it is
effectively $G_\delta$ in Baire space. In particular, the Martin-L\"{o}f random points form a c.e.\ open
set (and a proper subset of the open set of avoidable points).
While the $\Pi$--topology \emph{makes everything easier} when considering sets, it
\emph{makes everything more complicated} when considering points: a point is Turing reducible
to the $n$--th jump of the empty set in Baire space if and only if it has a name in the $\Pi$--space
that is Turing reducible to the $n+1$--st jump of the empty set.
This contrary behavior of points and sets is based on the fact that points are mapped forwards
and sets are mapped backwards.

We close by mentioning another property of the Turing jump $J$. The Galois connection between
the Turing jump $J$ and the limit $\lim$ cannot be extended to the continuous category.
That is, we get the following counterexample, which shows that the inverse $J^{-1}$ is
not a ``topological jump operator'' (see Lemma~\ref{lem:lim-jump}).

\begin{proposition}
There exists a total continuous function $F:\Baire\to\Baire$ such that
there is no continuous function $G:\In\Baire\to\Baire$ with $FJ^{-1}=J^{-1}G$.
\end{proposition}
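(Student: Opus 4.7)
The strategy is to construct a continuous $F$ so that the partial map $JFJ^{-1}:\range(J)\to\Baire$ is discontinuous with respect to the Baire subspace topology on $\range(J)$; any continuous $G$ satisfying $FJ^{-1}=J^{-1}G$ would have to agree with $JFJ^{-1}$ on $\range(J)$ and hence inherit the continuity we will rule out.

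First I would fix a point $p_0\in\Baire$ for which the singleton $\{p_0\}$ is \emph{not} co-c.e.\ closed in Baire space. Such $p_0$ exist in abundance, because only countably many singletons are co-c.e.\ closed (they are indexed by c.e.\ enumerations of the complement) while $\Baire$ is uncountable. An explicit example is any $1$--generic point: if $\Baire\setminus\{p_0\}$ were c.e.\ open, applying $1$--genericity to this set would produce a finite $w\prefix p_0$ with either $w\Baire\subseteq\Baire\setminus\{p_0\}$ (contradicting $p_0\in w\Baire$) or $w\Baire\cap(\Baire\setminus\{p_0\})=\emptyset$ (forcing the infinite set $w\Baire$ into the singleton $\{p_0\}$).

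Then I would define $F:\Baire\to\Baire$ by setting $F(p)(n):=0$ if $p(i)=p_0(i)$ for all $i\le n$, and $F(p)(n):=1$ otherwise. Since each coordinate $F(p)(n)$ depends only on the first $n+1$ values of $p$, the function $F$ is continuous. By construction $F(p_0)=\widehat{0}$, while $F(p)$ contains at least one $1$ whenever $p\ne p_0$. Choose an index $m$ with $U_m=\Baire\setminus\{\widehat{0}\}$ in the standard enumeration of c.e.\ open sets used to define $J$; then $JF(p)(m)=0$ if and only if $p=p_0$.

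Now I would assume for contradiction that $G:\In\Baire\to\Baire$ is continuous with $FJ^{-1}=J^{-1}G$. Then $\range(J)\subseteq\dom(G)$, $G(y)=JFJ^{-1}(y)$ for every $y\in\range(J)$, and the restriction of $G$ to $\range(J)$ is continuous for the Baire subspace topology. By Theorem~\ref{thm:pi-topology}, the map $J:(\Baire,\Pi)\to\range(J)$ (with the Baire subspace topology on the codomain) is a homeomorphism, since injectivity and the initial-topology characterization make $J$ and its set-theoretic inverse continuous in both directions. Transporting continuity of $JFJ^{-1}$ at $y_0:=J(p_0)$ across this homeomorphism yields that $JF:(\Baire,\Pi)\to(\Baire,\mathrm{Baire})$ is continuous at $p_0$. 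Applying this to a basic Baire-open neighborhood of $JF(p_0)$ that pins down its first $m+1$ coordinates then produces a $\Pi$-open neighborhood $V$ of $p_0$ on which $JF(\cdot)(m)\equiv 0$; by the computation above, $V\subseteq\{p_0\}$ and hence $V=\{p_0\}$. By Lemma~\ref{lem:clopen} this forces $\{p_0\}$ to be co-c.e.\ closed, contradicting the choice of $p_0$.

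The main obstacle I anticipate is setting up the translation between continuity of the partial function $JFJ^{-1}$ on $\range(J)$ and continuity of $JF$ on $(\Baire,\Pi)$ cleanly; this hinges on the homeomorphism supplied by Theorem~\ref{thm:pi-topology}. Once that is in place, the rest amounts to the direct verification on the explicit $F$ above, where the non-$\Pi$-isolation of $p_0$ is exactly what is needed to block the required neighborhood from existing.
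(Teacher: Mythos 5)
Your proposal is correct and takes essentially the same route as the paper: both pick a point whose singleton is not co-c.e.\ closed, build a continuous $F$ that maps exactly that point to a computable value, and then derive a contradiction from Theorem~\ref{thm:pi-topology} and Lemma~\ref{lem:clopen} by showing the singleton would have to be $\Pi$--clopen. The paper phrases the last step globally (the preimage under $G$ of the clopen set $\{J(c)\}$ would be $\{J(d)\}$, which is not clopen in $\range(J)$) with $F(p)=c+|d-p|$, while you phrase it locally at $p_0$ with a characteristic-function style $F$, but these are cosmetic differences of the same argument.
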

\begin{proof}
Let $c,d\in\Baire$ be such that $c$ is computable and $\{d\}$ is not co-c.e.\ closed.
Then according to Lemma~\ref{lem:clopen} $\{c\}$ is clopen and $\{d\}$ is not clopen in the $\Pi$--topology.
Since by Theorem~\ref{thm:pi-topology} the $\Pi$--topology is just the initial topology of the jump $J$, which
is injective, it follows that $\{J(c)\}$ is clopen and $\{J(d)\}$ is not clopen in $\range(J)$ with respect to the ordinary Baire topology.
Now we define a continuous map $F:\Baire\to\Baire$ by
\[F(p):=c+|d-p|\]
for all $p\in\Baire$, where all arithmetic operations are meant pointwise.
It is clear that $F$ is continuous with $F^{-1}\{c\}=\{d\}$.
Let us now assume that some map $G:\In\Baire\to\Baire$ has the property
$FJ^{-1}=J^{-1}G$. In particular $\dom(G)=\range(J)$ in this situation. Then we obtain
\[G^{-1}\{J(c)\}=(J^{-1}G)^{-1}\{c\}=(FJ^{-1})^{-1}\{c\}=\{J(d)\}.\]
That is, although the set $\{J(c)\}$ is clopen in $\range(J)=\dom(J^{-1})$, its preimage
under $G$ is not clopen in $\dom(G)=\range(J)$
and hence $G$ cannot be continuous.
\end{proof}

\section{Conclusions}
\label{sec:conclusions}

\begin{figure}[htbp]
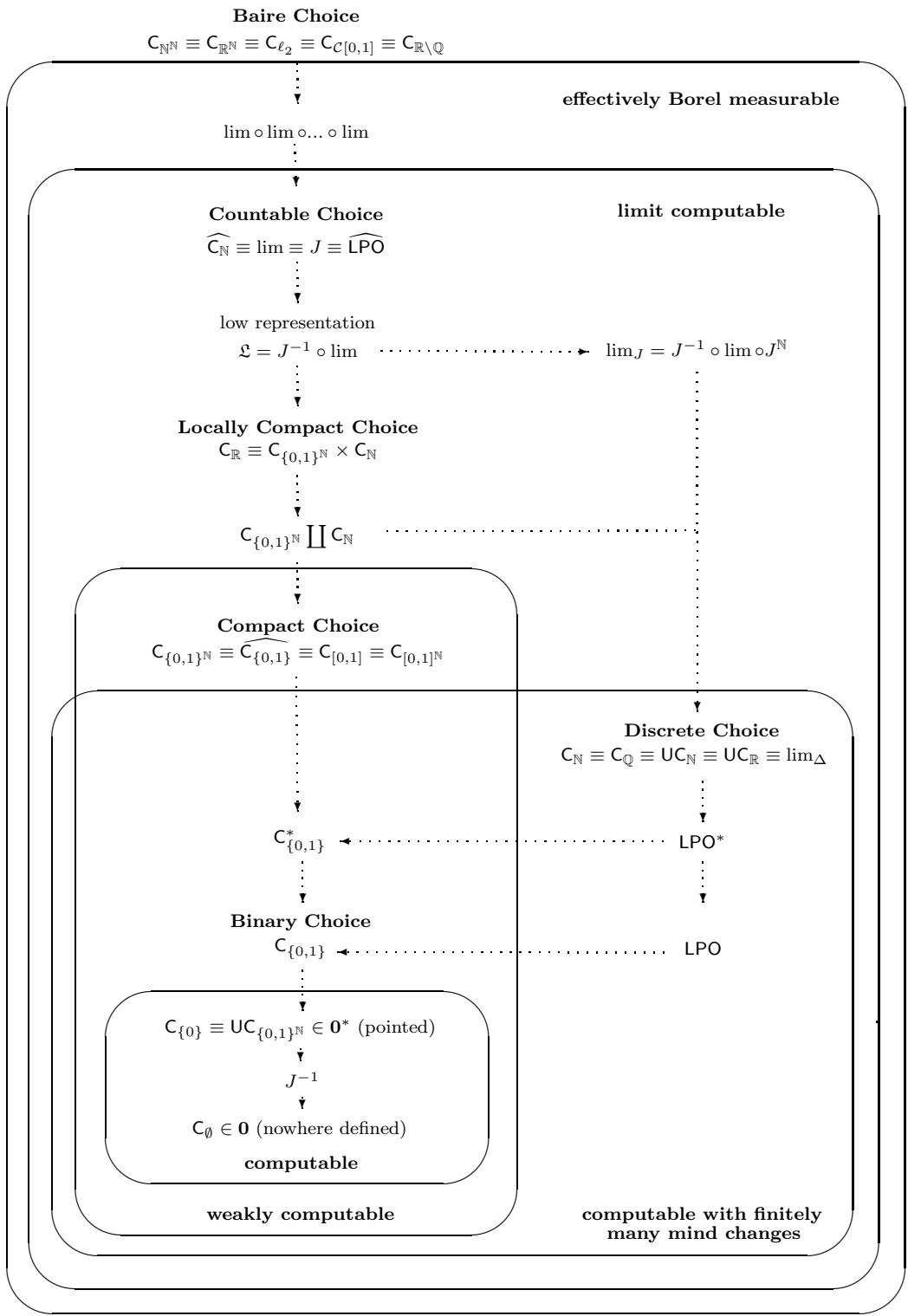

\begin{center}
\begin{scriptsize}
\input choice.pic
\end{scriptsize}
\caption{Closed choice in the Weihrauch lattice}
\label{fig:choice}
\end{center}
\end{figure}

We summarize some of the results that we have obtained in tables and figures.
Figure~\ref{fig:choice} extends the results provided in \cite[Figure~6]{BG09b}.
Here $\botW$ denotes the Weihrauch degree of the nowhere defined functions
and one obtains as $\botW^*$ the degree of all pointed computable multi-valued
functions on represented spaces.
The table below gives a list of some classes of multi-valued functions on represented
spaces that can be characterized by choice for certain spaces.
The given topological counterparts are at least correct for computable
Polish spaces and in some cases they have only been proved for single-valued functions.

\begin{table}[htb]
\label{tab:choice}
\begin{center}
\begin{footnotesize}
\begin{tabular}{ll}
{\bf Choice} & {\bf Class of functions (topologically)}\\\hline
$\C_{\{0\}}$ & computable (continuous) \\
$\C_\IN$  & computable with finitely many mind changes (piecewise continuous) \\
$\C_\Cantor$ & weakly computable (upper semi-continuous compact-valued selectors) \\
$\C_{\IN\times\Cantor} $ & weakly computable with finitely many mind changes \\
$\widehat{\C_\IN}$ & limit computable ($\Sigma^0_2$--measurable)\\
$\C_\Baire$ & effectively Borel measurable (Borel measurable)
\end{tabular}
\end{footnotesize}
\end{center}
\end{table}

The notion ``weakly computable with finitely many mind changes'' has not been used
before and is an ad hoc creation just for the purposes of this table.


\section*{Acknowledgement}

We thank the anonymous referees for several valuable corrections, comments and remarks that
helped to improve the final version of this paper.


\bibliographystyle{elsart-num-sort}
\bibliography{../../bibliography/new/lit,local}

\begin{thebibliography}{10}
\expandafter\ifx\csname url\endcsname\relax
  \def\url#1{\texttt{#1}}\fi
\expandafter\ifx\csname urlprefix\endcsname\relax\def\urlprefix{URL }\fi

\bibitem{Bos08f}
V.~Bosserhoff, Computable functional analysis and probabilistic computability,
  Ph.D. thesis, University of the Armed Forces, Munich (2008).

\bibitem{Bra99bx}
V.~Brattka, Recursive and computable operations over topological structures,
  Informatik Berichte 255, University of Hagen, Hagen, {PhD} Thesis (Jul.
  1999).

\bibitem{Bra05}
V.~Brattka, Effective {B}orel measurability and reducibility of functions,
  Mathematical Logic Quarterly 51~(1) (2005) 19--44.
\newline\urlprefix\url{http://dx.doi.org/10.1002/malq.200310125}

\bibitem{Bra07x}
V.~Brattka, Limit computable functions and subsets, unpublished notes (2007).

\bibitem{Bra08}
V.~Brattka, Plottable real number functions and the computable graph theorem,
  SIAM Journal on Computing 38~(1) (2008) 303--328.
\newline\urlprefix\url{http://dx.doi.org/10.1137/060658023}

\bibitem{BG09b}
V.~Brattka, G.~Gherardi, Effective choice and boundedness principles in
  computable analysis, The Bulletin of Symbolic Logic (to appear), preliminary
  version: http://arxiv.org/abs/0905.4685.

\bibitem{BG09a}
V.~Brattka, G.~Gherardi, Weihrauch degrees, omniscience principles and weak
  computability, The Journal of Symbolic Logic (to appear), preliminary
  version: http://arxiv.org/abs/0905.4679.

\bibitem{BG09}
V.~Brattka, G.~Gherardi, Borel complexity of topological operations on
  computable metric spaces, Journal of Logic and Computation 19~(1) (2009)
  45--76.
\newline\urlprefix\url{http://dx.doi.org/10.1093/logcom/exn027}

\bibitem{BP03}
V.~Brattka, G.~Presser, Computability on subsets of metric spaces, Theoretical
  Computer Science 305 (2003) 43--76.
\newline\urlprefix\url{http://dx.doi.org/10.1016/S0304-3975(02)00693-X}

\bibitem{Bre09x}
M.~de~Brecht, Jump operators, unpublished notes (2009).

\bibitem{BY09}
M.~de~Brecht, A.~Yamamoto, Mind change complexity of inferring unbounded unions
  of pattern languages from positive data, Theoretical Computer Science 411
  (2010) 976--985.

\bibitem{BY09a}
M.~de~Brecht, A.~Yamamoto, Topological properties of concept space, Information
  and Computation 208~(4) (2010) 327--340.

\bibitem{DK87}
O.~Demuth, A.~Ku{\v{c}}era, Remarks on $1$-genericity, semigenericity and
  related concepts, Commentationes Mathematicae Universitatis Carolinae 28~(1)
  (1987) 85--94.

\bibitem{GM09}
G.~Gherardi, A.~Marcone, How incomputable is the separable {H}ahn-{B}anach
  theorem?, Notre Dame Journal of Formal Logic 50 (2009) 393--425.

\bibitem{Her96}
P.~Hertling, {U}nstetigkeitsgrade von {F}unktionen in der effektiven
  {A}nalysis, Informatik Berichte 208, FernUniversit\"at Hagen, Hagen,
  dissertation (Nov. 1996).

\bibitem{JS72}
C.~G. Jockusch, Jr., R.~I. Soare, Degrees of members of {$\Pi \sp{0}\sb{1}$}
  classes, Pacific J. Math. 40 (1972) 605--616.

\bibitem{KW03}
I.~Kalantari, L.~Welch, A blend of methods of recursion theory and topology,
  Annals of Pure and Applied Logic 124~(1--3) (2003) 141--178.

\bibitem{Kec95}
A.~S. Kechris, Classical Descriptive Set Theory, vol. 156 of Graduate Texts in
  Mathematics, Springer, Berlin, 1995.

\bibitem{Kle52}
S.~C. Kleene, Introduction to Metamathematics, vol.~1 of Bibliotheca
  Mathematica, North-Holland, Amsterdam, 1952.

\bibitem{KSZ10}
O.~V. Kudinov, V.~L. Selivanov, A.~V. Zhukov, Undecidability in {W}eihrauch
  degrees, in: F.~Ferreira, B.~L{\"o}we, E.~Mayordomo, L.~Mendes~Gomes (eds.),
  Programs, Proofs, Processes, vol. 6158 of Lecture Notes in Computer Science,
  Springer, Berlin, 2010, 6th Conference on Computability in Europe, CiE 2010,
  Ponta Delgada, Azores, Portugal, June/July 2010.
\newline\urlprefix\url{http://dx.doi.org/10.1007/978-3-642-13962-8}

\bibitem{Mil02a}
J.~S. Miller, Pi-0-1 classes in computable analysis and topology, Ph.D. thesis,
  Cornell University, Ithaca, USA (2002).

\bibitem{Mos80}
Y.~N. Moschovakis, Descriptive Set Theory, vol. 100 of Studies in Logic and the
  Foundations of Mathematics, North-Holland, Amsterdam, 1980.

\bibitem{Nie09}
A.~Nies, Computability and Randomness, vol.~51 of Oxford Logic Guides, Oxford
  University Press, New York, 2009.

\bibitem{Odi89}
P.~Odifreddi, Classical Recursion Theory, vol. 125 of Studies in Logic and the
  Foundations of Mathematics, North-Holland, Amsterdam, 1989.

\bibitem{Odi99}
P.~Odifreddi, Classical Recursion Theory - Volume II, vol. 143 of Studies in
  Logic and the Foundations of Mathematics, North-Holland, Amsterdam, 1999.

\bibitem{Pau09b}
A.~Pauly, How discontinuous is computing {N}ash equilibria?  preliminary
  version: http://arxiv.org/abs/0907.1482.

\bibitem{Pau09c}
A.~Pauly, Infinite oracle queries in type-2 machines (extended abstract)
  preliminary version: http://arxiv.org/abs/0907.3230.

\bibitem{Pau09}
A.~Pauly, On the (semi)lattices induced by continuous reducibilities,
  Mathematical Logic Quarterly 56~(5) (2010) 488--502.

\bibitem{Sel07}
V.~L. Selivanov, Hierarchies of {$\Delta^0_2$}--measurable $k$--partitions,
  Mathematical Logic Quarterly 53~(4--5) (2007) 446--461.
\newline\urlprefix\url{http://dx.doi.org/10.1002/malq.200710011}

\bibitem{Spe56}
C.~Spector, On degrees of recursive unsolvability, Annals of Mathematics (2) 64
  (1956) 581--592.

\bibitem{Ste89}
T.~v. Stein, Vergleich nicht konstruktiv l{\"o}sbarer {P}robleme in der
  {A}nalysis, {D}iplomarbeit, Fachbereich Informatik, FernUniversit\"at Hagen
  (1989).

\bibitem{Wad72}
W.~Wadge, Degrees of complexity of subsets of the {B}aire space, Notices of the
  Amer. Math. Soc. A (1972) 714--715.

\bibitem{Wad83}
W.~Wadge, Reducibility and determinateness on the {B}aire space, Thesis,
  University of California, Berkeley (1983).

\bibitem{Wei92a}
K.~Weihrauch, The degrees of discontinuity of some translators between
  representations of the real numbers, Technical Report TR-92-050,
  International Computer Science Institute, Berkeley (Jul. 1992).

\bibitem{Wei92c}
K.~Weihrauch, The {TTE}-interpretation of three hierarchies of omniscience
  principles, Informatik Berichte 130, FernUniversit\"at Hagen, Hagen (Sep.
  1992).

\bibitem{Wei00}
K.~Weihrauch, Computable Analysis, Springer, Berlin, 2000.

\bibitem{Wei03}
K.~Weihrauch, Computational complexity on computable metric spaces,
  Mathematical Logic Quarterly 49~(1) (2003) 3--21.

\bibitem{Zie07a}
M.~Ziegler, Real hypercomputation and continuity, Theory of Computing Systems
  41~(1) (2007) 177--206.
\newline\urlprefix\url{http://dx.doi.org/10.1007/s00224-006-1343-6}

\bibitem{Zie07}
M.~Ziegler, Revising type-2 computation and degrees of discontinuity, in:
  D.~Cenzer, R.~Dillhage, T.~Grubba, K.~Weihrauch (eds.), Proceedings of the
  Third International Conference on Computability and Complexity in Analysis,
  vol. 167 of Electronic Notes in Theoretical Computer Science, Elsevier,
  Amsterdam, 2007, {CCA} 2006, Gainesville, Florida, USA, November 1--5, 2006.
\newline\urlprefix\url{http://dx.doi.org/10.1016/j.entcs.2006.08.015}

\end{thebibliography}


\end{document}